\providecommand{\R}{}
\providecommand{\Z}{}
\providecommand{\N}{}
\renewcommand{\R}{\mathbb{R}}
\renewcommand{\Z}{\mathbb{Z}}
\renewcommand{\N}{{\mathbb N}}
\newcommand{\E}[1]{{\mathbf E}\left[#1\right]}										
\newcommand{\e}{{\mathbf E}}
\newcommand{\p}[1]{{\mathbf P}\left\{#1\right\}}
\newcommand{\I}[1]{{\mathbf 1}_{[#1]}}
\newcommand{\set}[1]{\left\{ #1 \right\}}
\newcommand{\Cprob}[2]{\mathbf{P}\set{\left. #1 \; \right| \; #2}} 
\newcommand{\probC}[2]{\mathbf{P}\set{#1 \; \left|  \; #2 \right. }}
\newcommand\cP{\mathcal P}
\newcommand{\pran}[1]{\left(#1\right)}
\providecommand{\eps}{}
\renewcommand{\eps}{\varepsilon}
\DeclareRobustCommand{\SkipTocEntry}[5]{} 
\newtheorem{thm}{Theorem}
\newtheorem{lem}[thm]{Lemma}
\newtheorem{prop}[thm]{Proposition}
\newtheorem{cor}[thm]{Corollary}
\newtheorem{definition}[thm]{Definition}
\newtheorem{fact}[thm]{Fact}
\newtheorem{claim}[thm]{Claim}
\newcommand{\len}{\mathop{\mathrm{len}}}
\newcommand{\low}{(\log n)^{1/5}}
\newcommand{\med}{0.9\,(\log n/\log \log n)}
\newcommand{\nearmax}{0.99\,(\log n/\log \log n)}
\newcommand\urladdrx[1]{{\urladdr{\def~{{\tiny$\sim$}}#1}}} 
\newcommand{\rw}{\mathrm{w}}
\newcommand{\pushright}[1]{\ifmeasuring@#1\else\omit\hfill$\displaystyle#1$\fi\ignorespaces}
\newcommand{\pushleft}[1]{\ifmeasuring@#1\else\omit$\displaystyle#1$\hfill\fi\ignorespaces}
\newcommand{\dist}{\ensuremath{\mathrm{dist}}}
\newcommand{\rt}{{\bf T}}
\newcommand{\type}{\tau} 
\newcommand{\cw}{\mathrm{c}} 
\newcommand{\mk}{k}
\newcommand{\dyck}{\mathrm{d}}
\numberwithin{thm}{section}
\newcommand{\f}{f}
\begin{document}

\title{The top eigenvalue of uniformly random trees} 
\author{Louigi Addario-Berry}
\address{Department of Mathematics and Statistics, McGill University, 805 Sherbrooke Street West, 
		Montr\'eal, Qu\'ebec, H3A 2K6, Canada}
\email{louigi.addario@mcgill.ca}
\urladdrx{http://problab.ca/louigi/}
\author{G\'abor Lugosi}
\address{Department of Economics and Business, 
Pompeu  Fabra University, Barcelona, Spain;
ICREA, Pg. Lluís Companys 23, 08010 Barcelona, Spain;
Barcelona Graduate School of Economics}
\email{gabor.lugosi@gmail.com}
\urladdrx{http://www.econ.upf.edu/~lugosi/}
\author{Roberto Imbuzeiro Oliveira}
\address{Instituto de Matem\'{a}tica Pura e Aplicada (IMPA), Estrada Dona Castorina 110, Rio de Janeiro, RJ, 22460-320, Brazil}
\email{rimfo@impa.br}
\urladdrx{https://sites.google.com/view/roboliv}
\date{April 2, 2024} 

\subjclass{60C05, 05C05, 05C50} 


 \begin{abstract} 
Let ${\mathbf T}_n$ be a uniformly random tree with vertex set $[n]=\{1,\ldots,n\}$, let $\Delta_{{\mathbf T}_n}$ be the largest vertex degree in ${\mathbf T}_n$, and let $\lambda_1({\mathbf T}_n),\ldots,\lambda_n({\mathbf T}_n)$ be the eigenvalues of its adjacency matrix, arranged in decreasing order. We prove that $|\lambda_1({\mathbf T}_n)-\sqrt{\Delta_{{\mathbf T}_n}}| \to 0$ in expectation as $n \to \infty$, and additionally prove probability tail bounds for $|\lambda_1({\mathbf T}_n)-\sqrt{\Delta_{{\mathbf T}_n}}|$. 
Writing $a_n$ for any median of $\Delta_{\rt_n}$, we also prove that $|\lambda_k(\rt_n)-\sqrt{a_n}| \to 0$ in expectation, uniformly over $1 \le k \le e^{\log^\beta(n)}$, for any fixed $\beta \in (0,1/2)$. 

The proof is based on the trace method and thus on counting closed walks in a random tree. To this end, we develop novel combinatorial tools for encoding walks in trees that we expect will find other applications. In order to apply these tools, we show that uniformly random trees --- after appropriate "surgery" --- satisfy, with high probability, the properties required for the combinatorial bounds to be effective. 
\end{abstract}

\maketitle

\section{\bf Introduction}
In this work, we derive precise information about the largest eigenvalue of the adjacency matrix of large uniformly random trees. For a finite graph $G=(V,E)$, the adjacency matrix of $G$ is the $V\times V$ symmetric matrix $A=A(G)$ whose $(u,v)$ entry is $1$ if $\{u,v\} \in E$ and is $0$ otherwise.
Write $\Delta_G$ for the largest degree of any vertex in $G$, and write $\lambda_1(G)\ge \lambda_2(G) \ge \cdots \ge \lambda_n(G)$ for the ordered sequence of eigenvalues of the adjacency matrix of $G$, where $n=|V|$. Also, for positive integers $n$ write $[n]:=\{1,\ldots,n\}$.
\begin{thm}\label{thm:main}
For $n \ge 1$, let $\rt_n$ be a uniformly random tree with vertex set $[n]$. Then 
\[
\e|\lambda_1(\rt_n)-\sqrt{\Delta_{\rt_n}}| \to 0
\]
as $n \to \infty$. Moreover, 
there exists a universal constant $L>0$ such that, as $n \to \infty$, 
\[
\p{|\lambda_1(\rt_n)-\sqrt{\Delta_{\rt_n}}|>L\left(\frac{\log^{2}\Delta_{\rt_n}}{\sqrt{\Delta_{\rt_n}}}\right)^{1/15}} \leq n^{-0.8+o(1)}.
\]
\end{thm}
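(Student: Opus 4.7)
The plan is to split $|\lambda_1(\rt_n) - \sqrt{\Delta_{\rt_n}}|$ into a nearly free lower bound and a much harder upper bound, the latter attacked by the trace method. The lower bound $\lambda_1(T) \ge \sqrt{\Delta_T}$ holds for any tree $T$, since $T$ contains $K_{1,\Delta_T}$ as an induced subgraph and $\lambda_1(K_{1,\Delta_T})=\sqrt{\Delta_T}$, so Cauchy interlacing suffices. All the work is to prove $\lambda_1(\rt_n) \le \sqrt{\Delta_{\rt_n}}(1 + o(1))$ in expectation and with the stated quantitative tail.

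For the upper bound I would use $\lambda_1(T)^{2k} \le \mathrm{tr}(A(T)^{2k}) = \sum_{v} W_{2k}(v,T)$, with $W_{2k}(v,T)$ counting closed walks of length $2k$ in $T$ based at $v$, and aim to establish $W_{2k}(v,T) \le \Delta_T^{\,k}\, e^{o(k)}$ for all (or at least most) $v$, provided $T$ is sufficiently ``star-like'' near the max-degree vertex. Choosing $k$ polynomial in $\log n$ and taking $2k$-th roots would then give $\lambda_1(\rt_n) \le \sqrt{\Delta_{\rt_n}}\,(1+o(1))$, with the particular exponent $1/15$ and tail rate $n^{-0.8+o(1)}$ emerging from tuning $k$ against the errors in the walk count. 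The key difficulty is that the naive Dyck-path count gives only $W_{2k}(v,T) \le \tfrac{1}{k+1}\binom{2k}{k}\,\Delta_T^{\,k} \le 4^k \Delta_T^{\,k}$, which merely reproduces the generic tree bound $\lambda_1(T) \le 2\sqrt{\Delta_T}$; suppressing the factor $4^k$ when $T$ is star-like is the combinatorial heart of the proof.

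The combinatorial step should go via decomposing a closed walk at $v$ into successive excursions into the subtrees rooted at the neighbours of $v$ and recursing; the resulting generating-function identity $F_v(z) = (1 - z\sum_{w\sim v}F_w(z))^{-1}$ shows that $W_{2k}(v^*,T)$ is close to $\Delta_T^{\,k}$ provided each subtree hanging off $v^*$ has small maximum degree and shallow relevant depth compared with $\Delta_T$. Encoding walks by a plane-tree shape together with Dyck-type data should make the accounting explicit and deliver the $\Delta_T^{\,k}\, e^{o(k)}$ estimate, with deeper excursions paying an exponential penalty per unit of depth. The ``surgery'' enters to make this estimate applicable to $\rt_n$: a typical realization has several ``secondary heavy'' vertices, some possibly close to $v^*$, and these would violate the star-likeness hypothesis. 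The fix is to detach offending subtrees to form a modified $\widetilde{\rt}_n$ that deterministically satisfies the combinatorial hypothesis and whose spectrum differs from that of $\rt_n$ by $o(1)$ (via Weyl/interlacing, since only few edges are cut); the conditioned Galton--Watson or Pr\"ufer-code description of $\rt_n$ provides the needed high-probability control on degrees and local geometry.

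The main obstacle will be the combinatorial counting lemma: producing an encoding of closed walks that simultaneously replaces the $4^k$ Dyck factor by $1$ when $T$ is star-like, and remains robust against the mild residual irregularities of $\widetilde{\rt}_n$ near $v^*$. The secondary challenge is the joint optimisation of the combinatorial error, the surgery budget, and the walk length $2k$: the peculiar exponent $1/15$ and the tail rate $n^{-0.8+o(1)}$ both suggest several competing losses being balanced against each other, so getting the final numerology right will require some care.
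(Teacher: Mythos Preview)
Your overall architecture is right: trivial lower bound, trace method for the upper bound, a combinatorial walk-counting lemma that kills the Catalan factor under structural hypotheses, and structural control on $\rt_n$ coming from its conditioned Galton--Watson description. Two points deserve comment, one a genuine gap.

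The surgery step as you describe it does not work. You propose to detach offending subtrees to obtain $\widetilde{\rt}_n$ and then argue via Weyl/interlacing that $|\lambda_1(\rt_n)-\lambda_1(\widetilde{\rt}_n)|=o(1)$ ``since only few edges are cut''. But the perturbation from deleting even a single edge has operator norm $1$, so Weyl gives only $|\lambda_1-\widetilde{\lambda}_1|\le (\text{edges cut})$, which is useless here. Interlacing is no better: edge deletion can only \emph{decrease} $\lambda_1$, so proving $\lambda_1(\widetilde{\rt}_n)\le \sqrt{\Delta}+o(1)$ tells you nothing about $\lambda_1(\rt_n)$. The paper resolves this with a different and rather clever idea: a \emph{rewiring lemma} showing that for any tree $T$ and vertices $v,w$, one of the two trees obtained by moving all of $v$'s edges (except the one on the $v$--$w$ path) onto $w$, or vice versa, has $\lambda_1$ at least as large as $\lambda_1(T)$. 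This is a monotone modification, so one can iterate it to clean up the cluster structure while only \emph{increasing} $\lambda_1$, which is exactly the right direction for the upper bound. The proof is two lines of Perron--Frobenius and a Rayleigh quotient, but the idea is not visible from your outline.

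On the combinatorics: your generating-function recursion $F_v(z)=(1-z\sum_{w\sim v}F_w(z))^{-1}$ is correct and morally underlies the argument, but by itself it does not kill the $4^k$ factor --- you still need to explain why the nested excursions rarely reach further high-degree vertices. The paper handles this by classifying vertices into high/medium/low degree types, encoding walks by an augmented Dyck path that records both height and type, separating out the ``zigzag'' backtracking steps from high-degree vertices (which legitimately contribute $\Delta_T^k$), and then counting the residual non-backtracking skeleton via a prefix-code decomposition with a $50$-word code. The $1/15$ exponent comes out of this code; it is not a tuning of $k$ (the paper takes $k\to\infty$), but rather the worst of several $\Delta_T(\cw^{(b)})^{2/(\len+\f)}/\Delta_T$ ratios over codewords.
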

It is natural to ask whether the terms $\Delta_{\rt_n}$ in the above theorem can be replaced by deterministic values. For the error bars on $|\lambda_1-\sqrt{\Delta_{\rt_n}}|$, they can. It follows from results of \citet[Theorem 2 and Lemma 4]{moon1968} that $\Delta_{\rt_n}\cdot\big(\tfrac{\log n}{\log\log n}\big)^{-1} \to 1$ in probability and that $\p{\Delta_{\rt_n} < 0.99\tfrac{\log n}{\log\log n}}< -\exp(-(1+o(1))n^{0.01})$, and so the probability bound in the above theorem also implies that there is a universal constant $L > 0$ such that 
\[
\p{|\lambda_1(\rt_n)-\sqrt{\Delta_{\rt_n}}|>L\left(\frac{(\log\log n)^{2.5}}{\sqrt{\log n}}\right)^{1/15}} \leq n^{-0.8+o(1)}.
\]
On the other hand, the upper tail of $\Delta_{\rt_n}$ is heavy enough that the probability bound in the theorem would not be true if $|\lambda_1(\rt_n)-\sqrt{\Delta_{\rt_n}}|$ were replaced by, e.g., $\big|\lambda_1(\rt_n)-\sqrt{\tfrac{\log n}{\log\log n}}\big|$, or indeed by any other fixed sequence. However, the expectation bound {\em does} hold with $\sqrt{\Delta_{\rt_n}}$ replaced by its median; moreover, we are able to prove the same expectation bound not just for $\lambda_1(\rt_n)$ but for $\lambda_k(\rt_n)$, for $k$ quite large.
\begin{thm}\label{thm:main2}
For $n \ge 1$, let $\rt_n$ be a uniformly random tree with vertex set $[n]$. Let $a_n$ be any median of $\Delta_{\rt_n}$. Fix any constant $\beta \in (0,1/2)$ and let $k(n)=\lceil \exp((\log n)^\beta)\rceil$. Then 
 \[
 \e|\lambda_1(\rt_n)-\sqrt{a_n}| \to 0
 \]
and
\[
 \e|\lambda_{k(n)}(\rt_n)-\sqrt{a_n}| \to 0. 
 \]
\end{thm}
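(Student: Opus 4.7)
The plan is to derive the first claim directly from Theorem~\ref{thm:main} combined with tight concentration of $\Delta_{\rt_n}$ around its median $a_n$, and to establish the lower bound $\lambda_{k(n)}(\rt_n)\ge\sqrt{a_n}-o(1)$ by locating many vertex-disjoint high-degree induced stars and invoking Cauchy interlacing; the matching upper bound on $\lambda_{k(n)}$ is automatic from $\lambda_{k(n)}\le\lambda_1$ together with the first claim.

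For the first claim, the triangle inequality gives
\begin{equation*}
\e|\lambda_1(\rt_n)-\sqrt{a_n}|\le \e|\lambda_1(\rt_n)-\sqrt{\Delta_{\rt_n}}|+\e|\sqrt{\Delta_{\rt_n}}-\sqrt{a_n}|,
\end{equation*}
whose first summand vanishes by Theorem~\ref{thm:main}. Using the Pr\"ufer-code description of $\rt_n$ --- under which each $D_v-1\sim\mathrm{Binomial}(n-2,1/n)$ and the $D_v$ are exchangeable --- together with Stirling, one obtains $\p{\Delta_{\rt_n}\ge a_n+k}=O(a_n^{-k})$ for $k\ge 1$, and via Chebyshev applied to the count of vertices of degree $\ge a_n-k+1$, $\p{\Delta_{\rt_n}\le a_n-k}=O(a_n^{-(k-1)})$ for $k\ge 2$. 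Summing yields $\e|\Delta_{\rt_n}-a_n|=O(1)$, whence $\e|\sqrt{\Delta_{\rt_n}}-\sqrt{a_n}|\le\e|\Delta_{\rt_n}-a_n|/\sqrt{a_n}\to 0$.

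For the lower bound on $\lambda_{k(n)}$, set $s_n=\lceil(\log n)^\beta/\log\log n\rceil+C$ for a large constant $C$, and let $\alpha_n=a_n-s_n$; since $\beta<1/2$, one has $\sqrt{a_n}-\sqrt{\alpha_n}\le s_n/(2\sqrt{a_n})=O((\log n)^{\beta-1/2}(\log\log n)^{-1/2})=o(1)$. I then plan to show, with probability $1-o(1)$, that $\rt_n$ contains vertices $v_1,\ldots,v_{k(n)}$ of degree $\ge\alpha_n$ at pairwise tree-distance at least $4$. In a tree each closed neighborhood $N[v_i]$ induces a star subgraph of $\rt_n$, and the distance-$4$ condition guarantees both that the $N[v_i]$ are vertex-disjoint and that no edges of $\rt_n$ connect them; hence the induced subgraph on $V':=\bigsqcup_iN[v_i]$ is exactly the disjoint union of the stars $S_{v_1},\ldots,S_{v_{k(n)}}$, whose top $k(n)$ eigenvalues are all $\ge\sqrt{\alpha_n}$. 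Cauchy interlacing for principal submatrices then delivers $\lambda_{k(n)}(\rt_n)\ge\lambda_{k(n)}(\rt_n[V'])\ge\sqrt{\alpha_n}=\sqrt{a_n}-o(1)$, and combining with the deterministic bound $|\lambda_{k(n)}|\le\sqrt{2(n-1)}$ on the exceptional event reduces the proof to showing that this event has super-polynomially small probability.

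The main technical obstacle is precisely this last concentration step. A greedy selection that removes the radius-$3$ ball around each chosen vertex requires $M_n:=|\{v:D_v\ge\alpha_n\}|$ to exceed $k(n)\cdot(2\Delta_{\rt_n})^4$ with high probability, since each chosen vertex rules out at most that many candidates. A first-moment computation gives $\e M_n\asymp a_n^{s_n}$, which dominates $k(n)\cdot(\log n)^4$ comfortably provided $C$ is chosen large enough. Plain Chebyshev produces only polynomial concentration, which is too weak once multiplied by the $\sqrt{n}$-sized worst-case eigenvalue bound. However, via the Pr\"ufer representation $M_n$ is a sum of negatively-associated indicators of balls-in-bins type, so Chernoff-style bounds apply and give $\p{M_n\le\tfrac12\e M_n}\le\exp(-c\,\e M_n)\le\exp(-e^{(1+o(1))(\log n)^\beta})$, vanishing faster than any polynomial in $n^{-1}$. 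Verifying this negative-association structure rigorously, or substituting a Chen--Stein or Janson-type bound tailored to the Pr\"ufer code, is the principal technical input I anticipate needing.
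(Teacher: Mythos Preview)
Your proposal is correct. For the first convergence $\e|\lambda_1(\rt_n)-\sqrt{a_n}|\to 0$ you proceed essentially as the paper does: triangle inequality, Theorem~\ref{thm:main}, and concentration of $\Delta_{\rt_n}$ around its median. The paper's execution differs only cosmetically, bounding $\e\sqrt{\Delta_{\rt_n}}$ above directly (Corollary~\ref{cor:deltaupper}, via $a^*(n)=\max\{m:m!\le n\}$ and $|a^*(n)-a_n|=O(1)$) rather than bounding $\e|\Delta_{\rt_n}-a_n|$.

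For the lower bound $\e\lambda_{k(n)}(\rt_n)\ge\sqrt{a_n}-o(1)$, your route is genuinely different. The paper avoids the separation step altogether via Lemma~\ref{lem:tree_ev_bd}: after rooting the tree, any $2k$ high-degree vertices contain a $k$-element independent subset (trees being bipartite), and the star vectors built from each $v_i$ together with its \emph{children} --- rather than its full neighbourhood --- are automatically pairwise orthogonal, since in a rooted tree children sets are disjoint and no $v_i$ can be a child of another $v_j$ in the independent set. Courant--Fischer then yields $\lambda_k(T)\ge\sqrt{d_{2k}-1}$ with no distance hypothesis. All that remains is to show $D_{\ge a}(n)\ge 2k(n)$ for $a=\lfloor a_n-\log^\alpha n\rfloor$ with some $\alpha\in(\beta,1/2)$, which is precisely the negative-association Chernoff bound you anticipate (it appears as Corollary~\ref{cor:deglower}). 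Your Cauchy-interlacing argument with pairwise tree-distance $\ge 4$ and greedy selection is equally valid, but it carries the extra overhead of bounding radius-$3$ balls via $\Delta_{\rt_n}$ and then controlling $\Delta_{\rt_n}$ on the good event; the paper's rooted-children trick buys a shorter proof by making geometric separation unnecessary, while your approach has the virtue of resting on a completely standard interlacing statement.
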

Together with Theorem~\ref{thm:main}, it follows that also 
\[
 \e|\lambda_{k(n)}(\rt_n)-\sqrt{\Delta_{\rt_n}}| \to 0, 
\]
by the triangle inequality. Moreover, 
since $\lambda_1 \ge \lambda _i \ge \lambda_{k(n)}$ for all $i \in [k(n)]$, the same estimates hold for $\e|\lambda_{i(n)}(\rt_n)-\sqrt{a_n}|$ and for $\e|\lambda_{i(n)}(\rt_n)-\sqrt{\Delta_{\rt_n}}|$, for any sequence $(i(n),n \ge 1)$ with $1\le i(n) \le k(n)$. We conjecture that for any $\beta \in (1/2,1]$ there is $c=c(\beta) > 0$ such that, with $\ell(n)=\lceil \exp((\log n)^\beta)\rceil$, then $\p{\lambda_{\ell(n)}(\mathrm{T}_n)\le \sqrt{a_n}-c} \to 1$; this would imply that the range of $k(n)$ covered by Theorem~\ref{thm:main2} is essentially optimal.

We expect that the methodology we develop here can also be used for controlling the top eigenvalues of other random tree models. Informally, in order to be applicable, it requires maximum-degree vertices to be relatively few in number and to be well-separated from one-another in the tree. (We provide a fairly detailed overview of our proof technique in Section~\ref{sub:overview}.) However, for the moment we do not see a way to formulate a more precise set of requirements which would allow us to treat many models simultaneously; thus far, model-specific conditions and calculations seem unavoidable to us.

\subsection{Related work}

It is known in some generality that the bulk spectral properties of sparse random graphs are determined locally. In particular, suppose that $(G_n,n \ge 1)$ is a sequence of finite random rooted graphs, where $G_n$ has $n$ vertices, and write 
\[
\mu_n = \frac{1}{n}
\sum_{i=1}^n \delta_{\lambda_i(G_n)}
\]
for the empirical spectral measure of $G_n$. If the sequence $(G_n,n \ge 1)$
converges in distribution in the local weak sense to a locally finite random rooted graph $G$, then $\mu_n$ converges weakly to a limiting measure $\mu=\mu_G$; see \cite{MR2724665}, where this is proved under a uniform integrability condition on the root degree, and \cite{bordenave16notes,abert16pointwise}, which handle the general case. More precise results about the bulk of the spectrum are available when the local weak limit is a (deterministic or random) tree \cite{MR2956206,MR4260473,MR4645723,MR4049219}.

For the edge of the spectrum, fewer general results are available, and indeed one expects more model-dependence. In the setting of the Erd\H{o}s-R\'enyi random graph $G(n,p)$, Krivelevich and Frieze showed that for any sequence $(p_n,n \ge 1)$ of probabilities, $\lambda_1(G(n,p_n))=(1+o(1))\max(np_n,\sqrt{\Delta_{G(n,p_n)}})$ in probability as $n \to \infty$, where $\Delta_{G(n,p_n)}$ denotes the maximum degree of the graph. The threshold $np_n \approx \sqrt{\Delta_{G(n,p_n)}}$ between the regime where $np_n$ is larger and that where $\sqrt{\Delta_{G(n,p_n)}})$ is larger occurs around $p_n^* =  (\log n)/\log(4/e)$. Alt, Ducatez and Knowles \cite{adk21,MR4515695,MR4328063} have shown that $p_n^*$ is the boundary between the localized and delocalized phases for the spectral edge of sparse Erd\H{o}s-R\'enyi random graphs, in the sense that for $\log\log n \ll p_n<(1-o(1))p_n^*$ the eigenvectors corresponding to eigenvalues near the edge of the spectrum are concentrated on a small number of vertices, whereas for $p_n > (1+o(1))p_n^*$ this is not the case. (We are being slightly informal here, and refer the reader to the papers cited above for more precise statements.) To see how the value $\sqrt{\Delta_{G(n,p_n)}}$ arises, note that for any finite graph $G=(V,E)$, if $v$ is a vertex of $G$ with degree $\Delta_G$, then the vector 
$x:V \to \R$ given by 
\[
x_w = \begin{cases}
        \sqrt{\Delta_G}& \mbox{ if }w=v\\
        1&\mbox{ if }\{v,w\} \in E\\
        0&\mbox{ otherwise}
        \end{cases}
\]
satisfies that $\langle x,Ax\rangle \ge \sqrt{\Delta_G}|x|^2$, so by Rayleigh's formula $x$ is a witness that $\lambda_1(G) \ge \sqrt{\Delta_G}$. Moreover, $x$ is very localized, taking nonzero values only on a single vertex and its neighbourhood; so this suggests that if  $\lambda_1(G)\approx\sqrt{\Delta_G}$ then perhaps the extremal eigenvectors are also localized.

Very recently, Hiesmayr and McKenzie \cite{hm23} have built on the techniques of \cite{MR4515695} to describe the extremal eigenvalues and eigenvectors of $G_{n,p}$ in the regime $\log^{-1/15}n \le np \le \log^{1/40} n$, showing that  $|\lambda_1(G_{n,p})-\sqrt{\Delta_{G_{n,p}}}| \to 0$ in probability, and, more strongly, that with high probability each of the top $e^{\log^{1/8} n}$ eivenvalues of $G_{n,p}$ is essentially determined by the graph structure of the metric ball of radius two around a vertex of near-maximal degree. Hiesmayr and McKenzie in fact prove quantitative bounds, and also establish localization of the corresponding eigenvectors. The case $np=\Theta(1)$ of their result in particular answers a question of Guionnet \cite[Section 3.3]{guionnet21bernoulli}. 

The above results for the spectral edge of Erd\H{o}s-R\'enyi random graphs have some similarity to ours; and indeed, the local structure of an Erd\H{o}s-R\'enyi random graph is similar to that of the random trees whose study we undertake (both are locally described by a Poisson branching process). We expect that eigenvector localization near the spectral edge also occurs in many random tree models, including those considered in this paper, but our proof technique - which we now sketch - only gives access to the extremal eigenvalues, and not to the associated eigenvectors.

\subsection{Proof overview}\label{sub:overview}

This section both presents our approach to proving Theorems~\ref{thm:main} and~\ref{thm:main2} and introduces some concepts and several pieces of notation which are required in the sequel. 

The bulk of the work is in proving the upper bound of the theorem. 
Our proof of the upper bound is based on the trace method, which we now recall. 
Fix a graph $G=(V,E)$ with $|V|=n$ and write $A=A(G)$. 
Then $\{\lambda_i(A^{\mk}),i \in [n]\}=\{\lambda_i(A)^{\mk},i \in [n]\}$ for all ${\mk} \ge 1$, so 
\begin{align}
\lambda_1(G) & 
= \lim_{{\mk} \to \infty} \Big(\sum_{i \in [n]} \lambda_i(A)^{2{\mk}}\Big)^{1/(2{\mk})}\nonumber\\ & 
= \lim_{{\mk} \to \infty} \Big(\sum_{i \in [n]} \lambda_i(A^{2{\mk}})\Big)^{1/(2{\mk})} \nonumber\\ & 
= \lim_{{\mk} \to \infty} \Big(\sum_{v \in V} (A^{2{\mk}})_{vv}\Big)^{1/(2{\mk})} \nonumber\\ &
= \lim_{{\mk} \to \infty} \Big(\max_{v \in V} (A^{2{\mk}})_{vv}\Big)^{1/(2{\mk})}\nonumber\, ,
\end{align}
where for the third equality we have used the trace formula.
Now, $(A^{2{\mk}})_{vv}$ is nothing but the number of closed walks of length $2{\mk}$ from $v$ to $v$ in $G$, by this we mean a sequence of vertices $(w_0,w_1,\ldots,w_{2{\mk}})$ with $w_0=w_{2{\mk}}=v_i$ such that $\{w_{i-1},w_{i}\} \in E$ for each $i\in [2{\mk}]$. The preceding displayed identity thus yields an equivalence between estimating $\lambda_1(G)$ and estimating the number of long closed walks in $G$. Writing $\mathcal{N}_{\mk}(v,G)$ for the set of closed walks of length $2{\mk}$ from $v$ to $v$ in $G$ and $N_{\mk}(v,G)=|\mathcal{N}_{\mk}(v,G)|$ and $N_{\mk}(G)=\max_{v \in V} N_{\mk}(v,G)$, we may rewrite the above identity for $\lambda_1(G)$ as 
\begin{equation}\label{eq:lambdaonebound}
    \lambda_1(G)=\lim_{{\mk} \to \infty} \left(\max_{v \in V} N_{\mk}(v,G)\right)^{1/(2{\mk})}
    =\lim_{{\mk} \to \infty} N_{\mk}(G)^{1/(2{\mk})}
    \, .
\end{equation}

We now turn to the setting of trees. For a tree $T=(V,E)$ and a walk $\rw=(w_0,w_1,\ldots,w_{j})$ in $T$, we write $D(\rw)=(d_0(\rw),\ldots,d_{j}(\rw)):=(\dist_T(w_0,w_i),0 \le i \le j)$ for the sequence of graph distances to $w_0$ in $T$, along the walk $\rw$. If $\rw$ is a closed walk then $j=2{\mk}$ is even and  $D(\rw)$ is a {\em Dyck path} 
 --- a non-negative lattice path with $d_0(\rw)=d_{2{\mk}}(\rw)=0$ and $|d_i(\rw)-d_{i-1}(\rw)|=1$ for all $i \in [2{\mk}]$. We will usually be considering $D(\rw)$ for closed walks $\rw$, and have chosen the notation accordingly.
We may then write 
\[
N_{\mk}(v,T)
=\sum_{\dyck} 
\{\rw \in \mathcal{N}_{\mk}(v,T): D(\rw)=\dyck\}\, ,
\]
where the sum is over Dyck paths $\dyck$ of length $2{\mk}$.

An easy way to bound this expression from above is to note that, whatever $\dyck$ may be, 
\[
\{\rw \in \mathcal{N}_{\mk}(v,T): D(\rw)=\dyck\}\le \Delta_T^{\mk}\, .
\]
This holds since in any Dyck path $D$ of length $2{\mk}$ there are $m$ increasing steps. An increasing step in $D(\rw)$ corresponds to a step of $\rw$ which increases the distance to $v$ in $T$; and, from any vertex $u$ of $T$ there are at most $\Delta_T$ choices for such a step. The decreasing steps yield no additional choices, since $T$ is a tree; every vertex $u$ of $T$ aside from $v$ has a unique neighbour $u'$ with $\dist_T(u',v) = \dist_T(u,v)-1$. 
Since the number of Dyck paths of length $2{\mk}$ is $\tfrac{1}{{\mk}+1}\binom{2{\mk}}{{\mk}}$, we deduce that 
\begin{equation}\label{eq:trivialbound}
N_{\mk}(T)=\max_{v \in V} N_{\mk}(v,T) \le \frac{1}{{\mk}+1}\binom{2{\mk}}{{\mk}}  \Delta^{\mk} \le (4\Delta)^{{\mk}}.
\end{equation}
Using \eqref{eq:lambdaonebound}, this yields the well-known fact that 
$\lambda_1(T) \le \lim_{{\mk} \to \infty} (\max_v N_{\mk}(v,T))^{1/2{\mk}}\le 2\sqrt{\Delta_T}$. 
This bound is tight for general trees for large $\Delta_T$; see, e.g.,
Godsil \cite{God84}, Stevanovi{\'c} \cite{Ste03}  who show that for any tree, the largest eigenvalue is at most $2\sqrt{\Delta_T-1}$.\footnote{In fact, we may easily recover the bound $2\sqrt{\Delta_T-1}$ from this argument. To do so, simply note that in \eqref{eq:lambdaonebound} we can replace $\max_{v \in V} N_{\mk}(v,T)$ by $N_{\mk}(w,T)$ for {\em any} fixed vertex $w$. If we fix a vertex $w$ of degree less than $\Delta_T$, then the number of {\em increasing} steps from any vertex is at most $\Delta_T-1$.}

We next illustrate the basic idea of how we  improve this bound under additional conditions on the tree, but first introduce one required piece of notation: for a tree $T=(V,E)$ and $r \in \N$, let $\Delta_T^{(r)}=\max_{v \in V} |\{w \in V:\dist_T(v,w)=r\}|$ be the largest size of an $r$'th neighbourhood in~$T$.

Note that any Dyck path $\dyck=d_0d_1\ldots d_{2{\mk}}$ either begins with the string $010$ or with the string $012$. (We shall often write $d_0d_1\ldots d_{2{\mk}}$ instead of $(d_0,\ldots,d_{2{\mk}})$ and $010$ instead of $(0,1,0)$, {\em et cetera}, for succinctness.) It is straightforward to see that 
\[
|\{\rw \in \mathcal{N}_m(v,T): 
d_0(\rw)d_1(\rw)d_2(\rw)=010\}| \le \Delta_T N_{{\mk}-1}(v,T) \le \Delta_T N_{{\mk}-1}(T),
\]
since a walk counted in the above set is simply the concatenation of a walk of length $2$ and a walk of length $2({\mk}-1)$.
A walk $\rw \in \mathcal{N}_{\mk}(v,T)$ with $d_0(\rw)d_1(\rw)d_2(\rw)=012$ may also be decomposed: after $w_2$ must appear a closed walk from $w_2$ to $w_2$ which avoids $w_1$, followed by a closed walk from $w_1$ to $w_1$ which avoids $w_0$, and finally a closed walk from $w_0$ to $w_0$; each of these may be a walk of length zero. Since there are at most $\Delta_T^{(2)}$ choices for $w_2$, it follows that 
\begin{align*}
|\{\rw \in \mathcal{N}_{\mk}(v,T): d_0(\rw)d_1(\rw)d_2(\rw)=012\}|
\le
\Delta^{(2)}_T\cdot
\mathop{\sum_{i,j,k \ge 0}}_{i+j+k={\mk}-2}N_i(T)N_j(T)N_k(T)\, .
\end{align*}
Combining these bounds, we obtain that 
\[
N_{\mk}(T)\le 
\Delta_T N_{{\mk}-1}(T)
+\Delta^{(2)}_T\cdot
\mathop{\sum_{i,j,k \ge 0}}_{i+j+k={\mk}-2}N_i(T)N_j(T)N_k(T)
\]
We may then apply this case analysis recursively to bound each of the terms $N_{{\mk}-1}(T)$, $N_i(T)$, $N_j(T)$ and $N_k(T)$, and hope that something useful comes out of it.

The set of words $\{010,012\}$ is a {\em prefix code} for Dyck paths; any Dyck path has a unique prefix in this set. For example, another prefix code is $\{010,0121,0123\}$. 
We may generalize the above case analysis and recursive approach to any set of prefix codes. Our bounds end up depending crucially on two quantities. First, we require control on the greatest number of ways that each given code word can appear as 
a prefix of a Dyck path of a walk in the graph (for example, our bound on this for the word 012 was $\Delta^{(2)}_T$). Second, when controlling
\[
|\{\rw \in \mathcal{N}_{\mk}(v,T): D(\rw)=\dyck\}|
\]
for a given Dyck path $\dyck$, our bounds depend on the number of times each code word appears when recursively decomposing $\dyck$ until no further decomposition is possible. 

In order to explain more precisely, a little combinatorics is unavoidable. Say that a sequence $\cw=(c_i,0 \le i \le j)$ is a {\em meander} if $c_0=0$, $c_i \ge 0$ for all $0 \le i \le j$, and $|c_i-c_{i-1}|=1$ for all $i \in [j]$; it is an {\em excursion} if additionally $c_j=0$. The {\em final value} of $\cw$ is $\f(\cw):=c_j$; its {\em length} is $\len(\cw)=j$. 
A finite set $C$ of meanders is a {\em prefix code} (or just {\em code}, for short)
if for any Dyck path $\dyck$ of positive length there is a unique meander $\cw \in C$ such that $\cw$ is a prefix of $\dyck$. The unique code $C$ with $01 \in C$ is $C=\{01\}$; we call this code {\em trivial}. Observe that if $C$ is nontrivial then $010 \in C$. 

For a prefix code $C=(\cw^{(1)},\ldots,\cw^{(a)})$, to any Dyck path $\dyck=(d_0,d_1,\ldots,d_{2{\mk}})$ we associate the vector 
\[
\vec{t}(C,\dyck)=(t_b(C,\dyck),1 \le b \le a),
\]
where $t_b(C,\dyck)$ is the number of times the code word $\cw^{(b)}$ is used when decomposing $\dyck$ using the code $C$. Formally, let $p=p(\dyck,\cw) \in [a]$ be the unique index for which $\cw^{(p)}$ is a prefix of $\dyck$. Let  $\eta_0,\ldots,\eta_{\f(\cw^{(p)})+1}$ be the unique increasing sequence of integers with $\eta_0=\len(\cw^{(p)})$ and $\eta_{\f(\cw^{(p)})+1}=2{\mk}$ such that for all $0 \le j \le \f(\cw^{(p)})$,
\[
\dyck^{(j)}:=(d_{\eta_j+i}-(\f(\cw^{(p)})-j),0\le i \le \eta_{j+1}-\eta_j)
\]
is a Dyck path. Then for $1 \le b \le a$ inductively define 
\[
t_{b}(C,\dyck)
=\I{b=p} +\sum_{j=0}^{\f(\cw_p)} t_{b}(C,\dyck^{(j)})\, .
\]
Note that the length of $\dyck$ is the sum of $\len(\cw^{(p)}),\f(\cw^{(p)})$, and the lengths of 
$\dyck^{(0)},\ldots,\dyck^{(\f(\cw^{(p)}))}$, so by induction we have  
\[
2\mk=\sum_{b \in [a]} t_b(C,\dyck)(\len(\cw^{(b)})+\f(\cw^{(b)})). 
\]
With a slight abuse of notation, given a closed walk $\rw$ in a tree $T$ we also define 
\[
\vec{t}(C,\rw)=\vec{t}(C,D(\rw))\, . 
\]
For a meander $\cw$ of length $j$ we let 
\[
\Delta_T(\cw) = \max_{v \in V}|\{\mbox{walks $\rw=(w_0,\ldots,w_j)$ in }T: w_0=v, D(w)=\cw\}|\,.
\]
For example,  
if $\cw=012\ldots h$ then $\Delta_T(\cw)=\Delta_T^{(h)}$; for another example, if $\cw=01212$ then $\Delta_T(\cw) \le \Delta_T^{(2)}\cdot (\Delta_T-1)$, since for any $v \in V$, a walk  $\rw=w_0w_1w_2w_3w_4$ with $w_0=v$ and with $D(\rw)=01212$ is uniquely determined by choosing $w_2$ with $\dist_T(w_0,w_2)=2$ and  $w_4 \ne w_0$ with $\dist_T(w_1,w_4)=1$. 

The following proposition gives an idea of how decomposing walks by prefix codes allows us to bound the number of closed walks in a tree. 
Given a prefix code $C=(\cw^{(1)},\ldots,\cw^{(a)})$ and a vector $\vec{t}=(t_1,\ldots,t_a)$ of non-negative integers,  
for a vertex $v$ in a tree $T$ we  denote by $N_{C,\vec{t}}(v)$ the number of closed walks $\rw$ from $v$ to $v$ in $T$ with $t_b(C,\rw)=t_b$ for all $1\le b \le a$; the tree $T$ in question will always be clear from context. Note that the length of any such walk is $2\mk=2\mk(\vec{t}):=\sum_{b=1}^a t_b(\len(\cw^{(b)})+\f(\cw^{(b)}))$. 

\begin{prop}\label{prop:kl_bd} Fix a tree $T$, a code $C=\{\cw^{(b)},1 \le b \le a\}$ and a vector of non-negative integers $\vec{t}=(t_1,\ldots,t_a)$. Let
$q= q(\vec{t})=\sum_{b=1}^a t_b\cdot (1+\f(\cw^{(b)}))$
and $\mk=\mk(\vec{t})=\sum_{b=1}^a t_b(\len(\cw^{(b)})+\f(\cw^{(b)}))/2$.
Then for any $\Delta > 0$, setting 
\[
g_b = \Big((1\vee 2e\f(\cw^{(b)})))\cdot \frac{\Delta_T(\cw^{(b)})}{\Delta^{(\len(\cw^{(b)})+\f(\cw^{(b)}))/2}}\Big)^{1/(1+\f(\cw^{(b)}))}\, ,
\]
for any vertex $v$ of $T$ we have 
\[
N_{C,\vec{t}}(v)\le eq^{1/2} \Delta^\mk \Big(\sum_{b=1}^a g_b\Big)^{q}\, .
\]
\end{prop}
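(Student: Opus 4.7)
The plan is to decompose each closed walk in $T$ by its Dyck path, then bound separately (a) the number of walks realising a fixed Dyck path and (b) the number of Dyck paths with a prescribed decomposition vector, before combining via the multinomial theorem. Below I abbreviate $\f_b := \f(\cw^{(b)})$.

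First, I would show by induction on $\sum_b t_b$ that for every Dyck path $\dyck$ with $\vec{t}(C,\dyck)=\vec{t}$ and every vertex $v$,
\[
|\{\rw\in\mathcal{N}_{\mk}(v,T):D(\rw)=\dyck\}|\,\le\,\prod_{b=1}^a \Delta_T(\cw^{(b)})^{t_b}.
\]
The induction uses the decomposition from the problem statement: $\dyck$ begins with a unique code word $\cw^{(p)}$, so the first $\len(\cw^{(p)})$ steps of $\rw$ contribute at most $\Delta_T(\cw^{(p)})$ choices by definition, after which the walker sits at some vertex $u_0$ with $\dist_T(v,u_0)=\f_p$, and the remainder splits into $\f_p+1$ sub-excursions confined to specified subtrees of $T$, to each of which the induction applies. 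Summing over $\dyck$,
\[
N_{C,\vec{t}}(v)\,\le\,|\{\dyck:\vec{t}(C,\dyck)=\vec{t}\}|\cdot \prod_b \Delta_T(\cw^{(b)})^{t_b}.
\]

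Next, I count Dyck paths with a given decomposition vector. They are in bijection with ordered plane trees whose internal nodes each carry a type $b\in[a]$ and have $d_b:=1+\f_b$ ordered children, where exactly $t_b$ internal nodes have type $b$; such trees have $q+1$ nodes in total and $n_0:=q+1-\sum_b t_b=1+\sum_b t_b\f_b\ge 1$ leaves. A Raney/cycle-lemma enumeration for typed plane trees gives
\[
|\{\dyck:\vec{t}(C,\dyck)=\vec{t}\}|\,=\,\frac{1}{q+1}\binom{q+1}{n_0,t_1,\ldots,t_a}\,=\,\frac{q!}{n_0!\,\prod_b t_b!}.
\]

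Finally, setting $s_b:=t_b(1+\f_b)$ (so that $\sum_b s_b=q$), the multinomial theorem yields
\[
(g_1+\cdots+g_a)^q\,\ge\,\binom{q}{s_1,\ldots,s_a}\prod_b g_b^{s_b},
\]
and a direct computation from the definition of $g_b$ gives $\Delta^{\mk}\prod_b g_b^{s_b}=\prod_b(1\vee 2e\f_b)^{t_b}\prod_b \Delta_T(\cw^{(b)})^{t_b}$. Assembling the three steps, the desired bound reduces to the combinatorial inequality
\[
\frac{\prod_b (t_b(1+\f_b))!}{n_0!\,\prod_b t_b!}\,\le\,e q^{1/2}\,\prod_b (1\vee 2e\f_b)^{t_b},
\]
which follows from standard Stirling and multinomial estimates: for each $b$ with $\f_b\ge 1$, $\binom{t_b(1+\f_b)}{t_b}\le(e(1+\f_b))^{t_b}\le(2e\f_b)^{t_b}$ (using $1+\f_b\le 2\f_b$), and $\prod_b(t_b\f_b)!\le(\sum_b t_b\f_b)!\le n_0!$ via a multinomial comparison. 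The step I expect to be most delicate is the enumeration in Step 2: since distinct code words may induce nodes of the same out-degree, the cycle lemma must be applied to count \emph{typed} rather than merely degree-labelled plane trees. The remaining ingredients are essentially mechanical bookkeeping.
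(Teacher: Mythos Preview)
Your proposal is correct and in fact yields a slightly sharper bound than stated (the factor $eq^{1/2}$ is not needed in your final inequality, since $\prod_b(t_b\f_b)!\le(\sum_b t_b\f_b)!=(n_0-1)!\le n_0!$ already absorbs everything). The route, however, differs from the paper's in two places.

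First, for the Dyck-path count the paper does \emph{not} enumerate exactly. Instead it proves an intermediate bound (Proposition~\ref{prop:code_bound}) by a rather elaborate encoding: from the partition $P(C,\dyck)$ it extracts a coarser ordered partition $(\Pi_b)_{b\in[a]}$ of $[q]$ with $|\Pi_b|=s_b$, and then, for each $b$, a nonnegative lattice path with steps in $\{-1,\f_b\}$ that records how the parts in $\Pi_b$ nest. This yields the overcounting bound
\[
|\{\dyck:\vec{t}(C,\dyck)=\vec{t}\}|\ \le\ \binom{q}{s_1,\ldots,s_a}\prod_{b=1}^a\frac{1}{s_b+1}\binom{s_b+1}{t_b}.
\]
Your exact count $q!/(n_0!\prod_b t_b!)$ via the cycle lemma on typed plane trees is both tighter and more direct; it bypasses the whole $\hat{\pi}$/$\Pi_b$/lattice-path machinery. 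The paper's encoding has the advantage of being self-contained (no appeal to Raney-type formulae), and of separating the contribution of each codeword into its own Fuss--Catalan factor, which is what motivates the later $(M,H)$-code refinements; your argument is shorter but leans on the typed cycle lemma, which---as you correctly flag---must be applied to type-labelled rather than degree-labelled trees.

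Second, in the final step the paper uses Stirling on the multinomial and then the nonnegativity of Kullback--Leibler divergence to pass from $\prod_b(g_b/\rho_b)^{\rho_b q}$ to $(\sum_b g_b)^q$. Your use of a single term of the multinomial expansion $(\sum_b g_b)^q\ge\binom{q}{s_1,\ldots,s_a}\prod_b g_b^{s_b}$ is the same inequality in different clothing, but avoids the Stirling detour and is why you do not need the $eq^{1/2}$ prefactor.
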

The proposition is proved by a careful analysis of the number of ways that a Dyck path can be partitioned by a given code, together with an application of a convexity bound (specifically, the non-negativity of the Kullback-Liebler divergence).

To get a feeling for how one can exploit such a bound, we consider what it yields for a couple of simple codes. 
First, consider the trivial code $C^*=\{01\}$. 
Then $\len(\cw^{(1)})=1$, $\f^{(1)}:=\f(\cw^{(1)})=1$, and therefore if $\len(\rw)=2\mk$ then $q(\vec{t})=2\mk=2t_1$. 
In this case the vector $\vec{t}$ has only one component $t_1$ and the only possible value 
is $\vec{t}=(\mk)$. Thus,
the number $N_{\mk}(v,T)$ of closed walks from $v$ to $v$ in $T$ of length $2\mk$ equals $N_{C,\vec{t}}(v)$ and $q(\vec{t})=2\mk(\vec{t})$. Taking $\Delta=\Delta_T$, then $g_1=2e$, and hence 
the bound of Proposition~\ref{prop:kl_bd} becomes $N_{\mk}(v,T) \le e\mk^{1/2}(4e^2\Delta_T)^\mk$.

Next consider the code
$C^*=\{\cw^{(1)},\cw^{(2)},\cw^{(3)}\}=\{010,0121,0123\}$.
For this code we have
$\f^{(1)}=0,\f^{(2)}=1,\f^{(3)}=3$ and 
$\len(\cw^{(1)})=2, \len(\cw^{(2)}) =3, \len(\cw^{(3)})=3$. For any closed walk $\rw=(w_0,\ldots,w_{2\mk})$ in $T$, writing $t_i=t_i(C^*,\rw)$, we have 
\[
2\mk = 2t_1+4t_2+6t_3,  
\]
and $q=q(\vec{t}(C^*,\rw)) = t_1+2t_2+4t_3 = \mk+t_3$. 
For any tree $T$ we have $\Delta_T(\cw^{(b)})  = \Delta_T^{(b)}$ for $b=1,2,3$, since in each case an embedding of a code word $\cw^{(b)}$ is uniquely specified by the choice of its initial vertex and the unique vertex at distance $b$. We thus have $g_1 = 1$, $g_2 =(2e\Delta^{(2)}/\Delta_T^2)$, and $g_3 = (6e\Delta^{(3)}/\Delta_T^3)^{1/4}$; it follows 
by Proposition~\ref{prop:kl_bd}, again applied with $\Delta=\Delta_T$ that for any vertex~$v$, 
\[
N_{C,\vec{t}}(v) \le eq^{1/2} \Delta_T^\mk
\left(1+ \sqrt{\frac{2e\Delta_T^{(2)}}{\Delta_T^2}} +
      \sqrt[4]{\frac{6e\Delta_T^{(3)}}{\Delta_T^3}} \right)^{\!q}
\le e(2\mk)^{1/2} \Delta_T^\mk
\left(1+ \sqrt{\frac{2e\Delta_T^{(2)}}{\Delta_T^2}} +
      \sqrt[4]{\frac{6e\Delta_T^{(3)}}{\Delta_T^3}} \right)^{\!2\mk},
\]
where we have used that $q=\mk+t_3 \le 2\mk$. 
Thus, the number $N_{\mk}(v,T)$ of closed walks from $v$ to $v$ in $T$ of length $2\mk$ may be bounded as 
\begin{align*} 
N_{\mk}(v,T)  &\le 
e(2\mk)^{1/2} \Delta_T^\mk \cdot \sum_{t_1+2t_2+3t_3=\mk} 
\left(1+ \sqrt{\frac{2e\Delta_T^{(2)}}{\Delta_T^2}} + 
      \sqrt[4]{\frac{6e\Delta_T^{(3)}}{\Delta_T^3}} \right)^{2\mk} \\ 
      & \le e(2\mk)^{1/2}\Delta_T^\mk \mk^{3} \left(1+ \sqrt{\frac{2e\Delta_T^{(2)}}{\Delta_T^2}} + 
      \sqrt[4]{\frac{6e\Delta_T^{(3)}}{\Delta_T^3}} \right)^{2\mk}\, . 
\end{align*} 
Taking $2\mk$'th roots on both sides, letting $\mk \to \infty$  and applying (\ref{eq:lambdaonebound}) this yields that 
\begin{equation}\label{eq:simplebound}
\lambda_1(T) \le \Delta_T^{1/2}\left(1+ \sqrt{\frac{2e\Delta_T^{(2)}}{\Delta_T^2}} + 
      \sqrt[4]{\frac{6e\Delta_T^{(3)}}{\Delta_T^3}} \right)\, . 
  \end{equation}
  This bound is useful if one can control the ratios $\Delta_T^{(2)}/\Delta_T^2$ and $\Delta_T^{(3)}/\Delta_T^3$
 in a meaningful manner, which is the case for the random trees considered in this paper.
  Indeed, it follows from the structural properties shown later\footnote{Specifically, the upper bound on third neighbourhood sizes from  Corollary~\ref{cor:typicalclusterdegrees}, together with the lower bound on $\Delta_{\rt_n}$ coming from Lemma~\ref{lem:typical}.}
  that, with high probability, for all vertices $v$ in the random tree $\rt_n$, the number of vertices within distance three from 
  $v$ is $O(\Delta_{\rt_n}\log^3\Delta_{\rt_n})$, and therefore  $(\Delta_{\rt_n}^{(2)}/\Delta_{\rt_n}^2)^{1/2}=O\left(\Delta_{\rt_n}^{-1/2}\log^{3/2} \Delta_{\rt_n}\right)$
  and $(\Delta_{\rt_n}^{(3)}/\Delta_{\rt_n}^3)^{1/4}=O\left(\log^{1/4}\Delta_{\rt_n}/\Delta_{\rt_n}^{1/2}\right)$.
  This implies that, with high probability, the largest eigenvalue of $\rt_n$ is at most
\[
    \sqrt{\Delta_{\rt_n}} + O\left(\log^{3/2} \Delta_{\rt_n}\right)~.
\]

Unfortunately, Proposition~\ref{prop:kl_bd} is not powerful enough to prove Theorem~\ref{thm:main}. 
Actually, in order to get strong enough bounds on the number of long closed walks in $\rt_n$, we end up additionally needing to keep track of the degrees of vertices we visit along the walk. This makes our codes more complicated, since they need to also encode this information. We partition the nodes of a tree $T$ into three types: high-degree nodes, with degree at least $0.95\Delta_T$; medium-degree nodes, with degree between $\Delta_T^{1/5}$ and $0.95\Delta_T$; and low-degree nodes, which are all the rest. We then augment the Dyck path encoding the distance of a closed path from its starting location with a sequence of vertex types along the sequence. For example, an augmented Dyck path may read
\[
((0,h),(1,h),(2,l),(1,h),(2,m),(1,h),(0,h)),
\]
corresponding to a walk which moved from a high-degree node $v$ to a neighbouring high-degree node $w$, then made two ``zig-zag'' steps, visiting a low-degree and a medium-degree neighbour of $w$ in turn, each time returning to $w$, and finally returned to $v$. 

Recall that a star tree
with central vertex of degree $\Delta$ has maximum eigenvalue $\sqrt{\Delta}$, and the only possible closed walks on such a star (starting from the central vertex) consist of a sequence of ``zigzag'' steps from a high-degree vertex to low-degree vertices and back again. 
Given that we aim to prove that $\e|\lambda_1(\rt_n)-\sqrt{\Delta_{\rt_n}}|=o(1)$, this suggests that for large $\mk$ the dominant contribution to $N_{\mk}(v,\rt_n)$ should consist of zigzag steps from a high-degree vertex to its low or medium-degree neighbours. It turns out to be useful to bound the contribution coming from such steps separately. Thus, we prove bounds on $N_{\mk}(v,T)$ by first bounding the number of walks of length $2j \le 2{\mk}$ which make no ``zigzags from high-degree vertices'', then estimating how many ways such walks can be decorated with zigzags to form a walk of length $2{\mk}$. (For technical reasons we also need to initially count walks which make no ``zigzags of height two'', of the form $((0,m/h),(1,l),(2,l),(1,l),(0,m/h))$, and then also control the number of ways such height-two zigzags can be added.) The bound we end up using appears as Proposition~\ref{prop:countpathsIJ}, in Section~\ref{sec:refined}. We apply it with a code consisting of 50 words, which appears in Figure~\ref{fig:rightcode} in Section~\ref{sec:endofproof}; the need to track vertex types as well as distance is the reason we need so many code words.

It turns out that to apply Proposition~\ref{prop:countpathsIJ} to the tree $\rt_n$ in order to bound $\e|\lambda_1(\rt_n)-\sqrt{\Delta_{\rt_n}}|$, it would suffice to show that $\rt_n$ satisfied the following set of structural properties with sufficiently high probability. 
For a tree $T=(V,E)$ and $v \in V$, write $\deg_T(v)$ for the degree of $v$ in $T$, and write $C_T(v)$ for the set of all vertices $w\in V$ such that the unique path from $v$ to $w$ does not contain two consecutive vertices with degree $<\low$; we call $C_T(v)$ the cluster of $v$ in $T$. 
Then the properties we end up requiring of a tree $T=(V,E)$ with $n$ vertices, in order for our combinatorial bounds to be strong enough, are the following. 
\begin{enumerate}
\item[{\bf N1}] The maximum degree of $T$ is $\Delta_T\geq \nearmax$. 
\item[{\bf N2}] For all $x\in V$, $\sum_{y\in C_T(v)}\deg_{T}(y)\I{\deg_{T}(y) \ge \low}\leq 3 \log n$.
\item[{\bf N3}] There are no adjacent nodes that both have degree $\geq 0.9\log n/\log \log n$.
\item[{\bf N4}] For all $x\in V$, $C_{T}(x)$ contains $<9\,\log \log n$ vertices of degree $\geq \low$.
\end{enumerate}

We prove that a random tree $\rt_n$ satisfies ${\bf N1-N3}$ with high probability by instead analyzing the probability that these properties hold for the family tree of a Poisson$(1)$ branching process conditioned to have total size $n$, then using the fact that the graph structure of $\rt_n$ is equivalent to that of such a conditioned branching process. 
However, we were unable to thereby establish that $\rt_n$ satisfies  property ${\bf N4}$ with high probability, and we do not in fact believe it to be true. What saves the proof, and allows us to make the above approach work, is a {\em rewiring lemma}, which permits us to ``clean up'' our tree to obtain one with the requisite properties, without decreasing $\lambda_1$. This rewiring lemma seems like a potentially valuable general tool, and its proof is quite simple, so we close the sketch of the upper bound by stating it and immediately providing its proof.

Given a finite tree $T=(V,E)$ 
and distinct vertices $v,w\in V$, the $(v,w)$-rewiring of $T$, denoted by $T_{v,w}$, is the graph obtained from $T$ as follows: let $\Gamma(v,T)$ denote the neighborhood of $v$ in $T$, and $u\in \Gamma(v,T)$ be the unique neighbor of $v$ on the path connecting $v$ to $w$ in $T$. Then the rewired tree $T_{v,w}=(V,E_{v,w})$ has the same vertex set as $T$ and edge set 
\[E_{v,w}:=E\cup \{sw\,:\,s\in \Gamma(v,T)-\{u\}\} - \{sv\,:\,s\in \Gamma(v,T)-\{u\}\}.\]
In words, $T_{v,w}$ is obtained from $T$ by replacing each edge $sv$ such that $s\neq u$ by the edge $sw$. 
 The following fact is immediate.
 \begin{fact}\label{fact:rewiring}$T_{v,w}$ is still a tree, and the unique path connecting $v$ to $w$ in $T_{v,w}$ is the same as in $T$. Moreover, the degrees $\mathrm{deg}_{T_{v,w}}(x)$, for $x\in V$, satisfy:
 \begin{equation}\label{eq:sendedges}\mathrm{deg}_{T_{v,w}}(x)=\left\{\begin{array}{ll}1, & x=v;\\ \mathrm{deg}_T(v) + \mathrm{deg}_T(w) - 1, & x=w;\\ \mathrm{deg}_T(x), & x\in V\backslash \{v,w\}.\end{array}\right.\end{equation}\end{fact}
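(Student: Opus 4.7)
My plan is to verify each of the three assertions directly from the definition of $E_{v,w}$, treating them in the order: tree structure first, then path preservation (which falls out immediately), then the degree formula.

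First I would establish that $T_{v,w}$ is a tree. The construction sets up an obvious bijection between the removed edges $\{sv : s \in \Gamma(v,T) \setminus \{u\}\}$ and the added edges $\{sw : s \in \Gamma(v,T) \setminus \{u\}\}$, so $|E_{v,w}| = |E| = |V| - 1$; it therefore suffices to show connectivity, which I would do by showing every $x \in V$ can reach $w$ in $T_{v,w}$. The vertex $v$ itself reaches $w$ via the edge $vu$, which is preserved since $u$ is explicitly excluded from the rewiring, followed by the $u$-to-$w$ subpath of the original $T$-path; none of the edges of that subpath are modified by the rewiring because they do not have $v$ as an endpoint. For any other $x \in V$, I would consider the unique $x$-to-$w$ path in $T$: if this path avoids $v$ then all of its edges persist unchanged in $T_{v,w}$, and otherwise it has the form $x \cdots s - v - u \cdots w$, in which case the edge $sv$ is replaced by the new edge $sw$ in $T_{v,w}$, yielding the walk $x \cdots s - w$ from $x$ to $w$. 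The edges of the initial segment $x \cdots s$ are preserved because, being a simple path, it does not revisit $v$ and so contains no edge incident to $v$.

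Path preservation is then immediate: the unique $T$-path from $v$ to $w$ is $v - u - \cdots - w$; its only edge incident to $v$ is $vu$, which is preserved, and its remaining edges lie in the subpath from $u$ to $w$ that avoids $v$, hence are also preserved. So the same vertex sequence is a path in $T_{v,w}$, and uniqueness follows from the tree structure just established.

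The degree formula is then a direct count from the construction. At $v$, every incidence except $vu$ is removed, giving $\deg_{T_{v,w}}(v) = 1$. At $w$, the $\deg_T(w)$ original incidences persist and $\deg_T(v) - 1$ new edges $\{sw : s \in \Gamma(v,T) \setminus \{u\}\}$ are attached. The one point where I expect a mild obstacle is verifying that these new edges do not duplicate existing edges at $w$; this is the only place where the tree property of $T$ genuinely matters rather than just the edge count. If some $s \neq u$ were already a common neighbor of $v$ and $w$ in $T$, then $v - s - w$ would be a path distinct from the original $v$-to-$w$ path through $u$, contradicting uniqueness of paths in the tree $T$. Hence the $\deg_T(v) - 1$ added edges are genuinely new, which yields $\deg_{T_{v,w}}(w) = \deg_T(v) + \deg_T(w) - 1$. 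For any $x \notin \{v,w\}$, either $x \in \Gamma(v,T) \setminus \{u\}$, in which case the single edge $xv$ is swapped for $xw$ and the degree is unchanged, or no edge at $x$ is touched at all.
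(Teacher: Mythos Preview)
Your proof is correct; the paper itself omits the argument entirely, declaring the fact immediate. One small organizational point: the edge-count equality $|E_{v,w}|=|E|$ that you assert early on already requires that no $s\in\Gamma(v,T)\setminus\{u\}$ is adjacent to $w$ in $T$, which you only justify later when discussing degrees; it would be cleaner to state that observation up front, but the logic is all there.
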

\begin{lem}[Rewiring lemma]\label{lem:rewiring}With the above notation, 
$\max\{\lambda_1(T_{v,w}),\lambda_1(T_{w,v})\}\geq \lambda_1(T).$\end{lem}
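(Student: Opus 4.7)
The plan is to use the variational characterization $\lambda_1(G) = \max_{y\neq 0} \langle y, A(G) y\rangle/\|y\|^2$ together with Perron-Frobenius positivity. Let $x$ be a unit top eigenvector of $A(T)$, chosen non-negative (possible since $T$ is connected, so a non-negative Perron eigenvector exists). The crucial structural point is that $T_{v,w}$ differs from $T$ only by moving the edges out of $v$ (other than $vu$) onto $w$: the set $\{sv : s \in \Gamma(v,T)\setminus\{u\}\}$ is replaced by $\{sw : s \in \Gamma(v,T)\setminus\{u\}\}$, and every other edge of $T$ is preserved. Expanding $\langle x, A(G)x\rangle = 2\sum_{\{i,j\}\in E(G)} x_i x_j$ for $G = T$ and for $G = T_{v,w}$ and cancelling the common edges, the difference collapses to
\[
\langle x, A(T_{v,w}) x\rangle - \lambda_1(T) = 2(x_w - x_v)\sum_{s \in \Gamma(v,T)\setminus\{u\}} x_s.
\]

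Because every $x_s$ is non-negative, the sign of this difference is simply the sign of $x_w - x_v$. So if $x_w \ge x_v$ then the Rayleigh bound applied to the test vector $x$ in $T_{v,w}$ gives $\lambda_1(T_{v,w}) \ge \lambda_1(T)$. If instead $x_v \ge x_w$, I would run the exactly symmetric calculation with $T_{w,v}$ (using the same non-negative eigenvector $x$ and with $u'$ the neighbor of $w$ on the $w$-to-$v$ path) to obtain $\lambda_1(T_{w,v}) \ge \lambda_1(T)$. In either case the maximum of the two rewired eigenvalues is at least $\lambda_1(T)$, which is the desired conclusion.

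I do not foresee any substantive obstacle; the proof is short because no surgery on the test vector is needed. The only two small bookkeeping points to verify are (i) that every $s \in \Gamma(v,T)\setminus\{u\}$ is distinct from $w$, so the edges $sw$ really are added in $T_{v,w}$ and the edge-difference calculation above is faithful; this holds because if $w$ were adjacent to $v$ in $T$ then the neighbor $u$ of $v$ on the $v$-to-$w$ path would equal $w$, contradicting $s\neq u$; and (ii) the applicability of Perron-Frobenius to produce a non-negative top eigenvector, which is standard for the adjacency matrix of any finite connected simple graph.
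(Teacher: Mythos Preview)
Your proposal is correct and is essentially identical to the paper's proof: both take a nonnegative Perron eigenvector $x$ of $A(T)$, compute $\langle x, A(T_{v,w})x\rangle - \langle x, A(T)x\rangle = 2(x_w-x_v)\sum_{s\in \Gamma(v,T)\setminus\{u\}} x_s$, and conclude by comparing $x_v$ with $x_w$. Your bookkeeping point (i) about $s\ne w$ is a nice explicit check that the paper leaves implicit.
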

\begin{proof}Assume $V=[n]$ for simplicity and let $A$ denote the adjacency matrix of $T$. By Perron-Frobenius, its largest eigenvalue is achieved by a vector $x\in\R^n$ with $|x|=1$ and non-negative entries, so that:
\[\langle x, Ax\rangle = 2\,\sum_{\{i,j\}\in E}x_ix_j = \lambda_1(T)\geq 0.\]
Assume without loss of generality that $x_v\leq x_w$, and let $B$ denote the adjacency matrix of $T_{v,w}$. We will show that $\lambda_1(T_{v,w})\geq \lambda_1(T)$. In fact, since \[\langle x,Bx\rangle \leq \lambda_1(T_{v,w})|x|^2 = \lambda_1(T_{v,w}),\] it suffices to argue that $\langle x, Bx\rangle\geq \langle x, Ax\rangle$. To do this, let $u\in \Gamma_T(v)$ be as in the definition of $T_{v,w}$. Then 
\[\langle x, Bx\rangle -\langle x, Ax\rangle = 2\,\sum_{s\in \Gamma_T(v)\backslash\{u\}}(x_w-x_v)\,x_s\geq 0,\]
because $x_s\geq 0$ for all $s\in [n]$ and $x_w\geq x_v$ by assumption.\end{proof}

We conclude the overview by briefly sketching the proof of Theorem~\ref{thm:main2}, which is rather straightforward once Theorem~\ref{thm:main} is at hand. Since $\lambda_k \le \lambda_1$ for all $k \ge 1$, to prove the theorem it suffices to prove that $\e{\lambda_1(\rt_n)} \le \sqrt{a_n}+o(1)$ and that $\e{\lambda_k(\rt_n)} \ge \sqrt{a_n}-o(1)$, for $a_n$ as in the theorem. We accomplish this via a straightforward but useful fact about trees: if $T$ is a rooted tree, and $T$ has $2s$ nodes with at least $c$ children, then $\lambda_s(T) \ge \sqrt{c}$. This is proved by first constructing a set $(x^{(1)},\ldots,x^{(s)})$ of orthogonal vectors such that $\langle x^{(i)},Ax^{(i)}\rangle \ge \sqrt{c}|x^{(i)}|^2$ for $1 \le i \le s$, then using the Rayleigh quotient principle (see Lemma~\ref{lem:tree_ev_bd}, below). The lemma allows us to bound $\e\lambda_k(\rt_n)$ from below by showing that with high probability $\rt_n$ contains many vertices of near-maximal degree, which we accomplish using the fact that the graph structure of $\rt_n$ is the same as that of a Poisson branching process conditioned to have total progeny $n$. The latter fact also allows us to  show that $\e\sqrt{\Delta_n}$ is within $o(1)$ of $\sqrt{a_n}$, which, together with Theorem~\ref{thm:main}, yields the required upper bound on $\e{\lambda_1(\rt_n)}$.

\label{sec:overview}
\subsection{Overview of the remainder of the paper}

The rest of the paper is structured as follows. In Section \ref{sec:combinatorialnew} we establish the combinatorial tools that we use for counting walks in trees, culminating in Corollary \ref{cor:countpathsIJ}
that gives a general upper bound in terms of certain properties of the tree and the code used for encoding walks. 
In Section \ref{sec:structural} we prove some structural properties of random labeled trees that hold 
with high probability. These properties are required to effectively use the combinatorial bound of 
Corollary \ref{cor:countpathsIJ}.
As mentioned above, these ``typicality'' properties are not quite sufficient to derive the desired bound for the largest eigenvalue. In Section \ref{sec:treesurgery} we show how random trees can be "safely rewired" using Lemma~\ref{lem:rewiring} so that they become amenable to using our combinatorial tools, without decreasing their top eigenvalue. 
Finally, in Section \ref{sec:endofproof} we put all ingredients of the proof together and describe a code that is sufficiently detailed, allowing us to conclude the proofs of Theorems \ref{thm:main} and~\ref{thm:main2}.

\section{\bf Combinatorial bounds}
\label{sec:combinatorialnew}

\subsection{Decompositions of Dyck paths and the proof of Proposition~\ref{prop:kl_bd}}
\label{sub:decomps}
In this section we elaborate on our method for bounding $N_{\mk}(T)$ via decompositions of Dyck paths. Fix a prefix code $C=(\cw^{(1)},\ldots,\cw^{(a)})$ and a Dyck path $\dyck=(d_0,d_1,\ldots,d_{2\mk})$. 

We begin by defining a partition $P=P(C,\dyck)$ of $[2\mk]$ which encodes ``what codeword each step belongs to''; this is quite similar to how we defined $\vec{t}(C,\dyck)$ in Section~\ref{sub:overview}. 

First, if there is $0 < i < 2k$ such that $d_i=0$ then, recursively, Dyck paths $(d_0,\ldots,d_{i})$ and $(d_{i},\ldots,d_{2k})$ induce partitions $P_1$ and $P_2$ of $\{1,\ldots,i\}$ and $\{i+1,\ldots,2k\}$; in this case we set $P=P_1 \cup P_2$. 

We may now assume that $d_i \ne 0$ for all $0 < i < 2k$. Let $\sigma(0)$ be minimal so that $(d_i,0 \le i \le \sigma(0))$ is an element of $C$, and write $h=d_{\sigma(0)}$.

If $h=0$ then necessarily $\sigma(0)=2k$; in this case we set $P=\{[2k]\}$. Otherwise, for $1 \le i \le h$, let
$\sigma(i) = \min\{m \ge \sigma(i-1): d_m = h-i\}$,
and set 
\[ 
P_0=\big\{\{1,\ldots,\sigma(0)-1,\sigma(0),\sigma(1),\ldots,\sigma(h)\}\big\}. 
\] 
Note that we necessarily have $\sigma(h)=2k$. 
Also, for each $0 \le i < h$, we have $d_{\sigma(i+1)}=d_{\sigma(i)}-1$. It follows that if $\sigma(i+1) > \sigma(i)+1$ then 
\[ 
(d_j-(h-i); ~\sigma(i) \le j \le \sigma(i+1)-1) 
\]
is an excursion (and thus a Dyck path) so, recursively, it induces a partition $P_i$ of $\{\sigma(i)+1,\ldots,\sigma(i+1)-1\}$. 
(For convenience, if $\sigma(i+1)=\sigma(i)+1$ then we take $P_i=\emptyset$.) Now set 
$P(C,\dyck) = \bigcup_{0 \le i < h} P_i$. 

Figure~\ref{fig:dyckpath} contains a graphical representation of the partition of the Dyck path 
\[0121210123232121210
\] by the code $C^*=\{\cw^{(1)},\cw^{(2)},\cw^{(3)}\}=\{{\color{blue}010},{\color{red}0121},{\color{lightgray}0123}\}$. The corresponding partition is 
\begin{align}
     P(C^*,\dyck) &= \{\pi_1,\pi_2,\pi_3,\pi_4,\pi_5\}\nonumber\\
     &=\big\{{\color{red}\{1,2,3,6\}},\color{blue}{\{4,5\}},{\color{lightgray}\{7,8,9,10,13,18\}}, {\color{blue}\{11,12\}}, {\color{blue}\{14,15\}},{\color{blue}\{16,17\}}\big\}\, .\label{eq:partition_example}
\end{align}
For the trivial code $\{01\}$, the partition would instead be
\begin{align*}
     P(\{01\},\dyck) &=\big\{\{1,6\},\{2,3\},\{4,5\},\{7,18\},\{8,13\},\{9,10\},\{11,12\},\{14,15\},\{16,17\}\big\}\, ;
\end{align*}
\begin{figure}[hbt]
    \centering
\includegraphics[width=\textwidth,page=1]{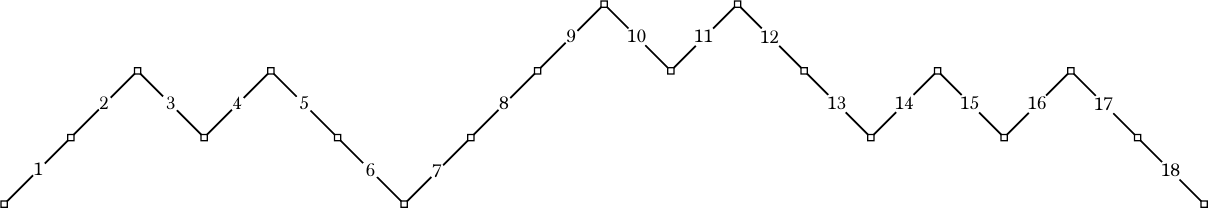}\\
\includegraphics[width=\textwidth,page=2]{graphics/dyckcode.pdf}
    \caption{Top: a Dyck path of length 18. Bottom: a graphical depiction of the partition induced by the code $C^*=\{\cw^{(1)},\cw^{(2)},\cw^{(3)}\}=\{{\color{blue}010},{\color{red}0121},{\color{lightgray}0123}\}$.}
    \label{fig:dyckpath}
\end{figure}

For a closed walk $\rw$ in a tree $T$, we will also write $P(C,\rw):=P(C,D(\rw))$. It is immediate that if two walks $\rw,\rw'$ have $D(\rw)=D(\rw')$, then $P(C,\rw)=P(C,\rw')$. This implies that for any tree $T$, any vertex $v$ of $T$, and any code $C$, we may rewrite $N_{\mk}(v,T)$, the number of walks from $v$ to $v$ of length $2\mk$, as 
\[
\sum_{\{\mathrm{partitions}~P~\mathrm{of}~[2k]\}} 
\sum_{\{\mathrm{Dyck~paths}~\dyck:P(C,\dyck)=P\}} \#\{\mbox{walks }\rw\mbox{ in }T: w_0=w_{2k}=v,D(\rw)=\dyck\}\, .
\]
An easy way to bound this expression from above is to note that whatever $\dyck$ may be, the inner summand is at most $\Delta_T^{\mk}$; this yields  the bound \eqref{eq:trivialbound} from Section~\ref{sub:overview}. To improve this bound, we need to control how much information  required to recover $D(\rw)$ from $P(C,\rw)$ and to recover $\rw$ from $D(\rw)$, and we now turn to this. 

Fix a Dyck path $\dyck$ and a  code $C = \{\cw^{(b)},1 \le b \le a\}$, and let $P=P(C,\dyck)$. Fix any part $\pi$ of $P$, list the elements of $\pi$ in increasing order as $\pi(1),\ldots,\pi(s)$, and let $\pi(0)=\pi(1)-1$. Then we write
$\cw(\pi) = \cw(\pi,C,\dyck)$ for the unique meander $\cw \in C$ which is a prefix of 
\[
(d_{\pi(t)}-d_{\pi(0)},0 \le t \le s)\, .
\]
For example, for the Dyck path in Figure~\ref{fig:dyckpath}, and the partition given in (\ref{eq:partition_example}), we have 
\begin{align}
(c(\pi_1),c(\pi_2),c(\pi_3),c(\pi_4) ,c(\pi_5) ,c(\pi_6))
& =
(0121,010,0123,010,010,010)\nonumber \\
& = (\cw^{(2)},\cw^{(1)},\cw^{(3)},\cw^{(1)},\cw^{(1)},\cw^{(1)}). \label{eq:code_info}
\end{align}
\begin{lem}\label{lem:partition_to_path}
For any code $C$, any Dyck path $\dyck$ is recoverable from the set 
$\{(\pi,\cw(\pi)),\pi \in P(C,\rw)\}$.
\end{lem}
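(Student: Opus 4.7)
The plan is to prove the lemma by strong induction on the length $2\mk$ of $\dyck$. The base case $\mk=0$ is trivial, since both the Dyck path and the partition data are empty. For the inductive step, I will show that the part of $P(C,\dyck)$ containing the index $1$ carries enough structural information to peel off either the first ``primitive excursion'' of $\dyck$ or the first intermediate zero, after which induction handles the rest.

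First I would identify $\pi^*$, the unique part of $P(C,\dyck)$ containing~$1$. Unpacking the recursive definition of $P(C,\dyck)$, $\pi^*$ is the top-level part of the primitive decomposition of the first maximal excursion of $\dyck$; consequently $\max\pi^*=2\mk$ exactly when $\dyck$ has no intermediate zero, and otherwise $\max\pi^*$ equals the first intermediate zero. Thus I can decide which branch of the recursive definition was taken just by inspecting $\max\pi^*$. If $\max\pi^*=i^*<2\mk$, the remaining parts of $P(C,\dyck)$ split cleanly into those contained in $\{1,\dots,i^*\}$ and those contained in $\{i^*+1,\dots,2\mk\}$; each collection, paired with its unchanged codewords, forms valid partition data for a strictly shorter Dyck path, which I recover by the induction hypothesis and then concatenate.

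If instead $\max\pi^*=2\mk$, then $\dyck$ is a single excursion. Writing $\cw(\pi^*)=(c_0,\dots,c_{\sigma(0)})$ with $\sigma(0)=\len(\cw(\pi^*))$ and $h=\f(\cw(\pi^*))$, I read off $d_j=c_j$ for $0\le j\le \sigma(0)$ directly from the codeword. The remaining $h$ elements of $\pi^*$, listed in increasing order, must then be the descent times $\sigma(1)<\dots<\sigma(h)=2\mk$ with $d_{\sigma(i)}=h-i$. On each interval $(\sigma(i),\sigma(i+1))$ the restricted partition data (with the same associated codewords) is exactly the partition data of the Dyck path obtained by shifting the corresponding sub-excursion down to ground level, and the induction hypothesis recovers each sub-excursion, completing the reconstruction. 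The main obstacle is essentially bookkeeping: I must verify that the parts split cleanly along $i^*$ and that the codeword $\cw(\pi)$ is unchanged when an excursion is shifted vertically. Both facts are immediate from the definitions, since $\cw(\pi)$ depends only on the relative profile $d_{\pi(t)}-d_{\pi(0)}$, and the recursive construction of $P(C,\dyck)$ respects the decomposition at intermediate zeros.
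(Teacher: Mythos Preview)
Your proof is correct and follows essentially the same approach as the paper's: strong induction on the length of $\dyck$, identifying the part containing index~$1$, reading off the values of $\dyck$ along that part from the codeword (and the forced descent), and then recursing on the shorter sub-Dyck-paths that remain. The paper's proof is terser---it does not explicitly separate the ``intermediate zero'' case and simply says that $\f(\cw(\pi))+1$ shifted sub-Dyck-paths remain---whereas you make the case split explicit and use $\max\pi^*$ to detect which branch of the recursion was taken; this is a harmless elaboration of the same idea.
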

\begin{proof}
Write $\pi_1$ for the part of $P(C,\dyck)$ containing $1$ and $\cw(\pi)=c_1c_2\ldots c_k$. Then listing the elements of $\pi_1$ in increasing order as $i_1,\ldots,i_k$, by construction we have $d_{i_j}=c_j$ for $1 \le j \le k$. The result then follows by induction on the length of $\dyck$, since what remains is to reconstruct $\f(\cw(\pi))+1$ sub-paths of $\dyck$ which are themselves shifted Dyck paths of length strictly less than that of $\dyck$.
\end{proof}

In fact, $\dyck$ can be recovered from somewhat less data than what is used in Lemma~\ref{lem:partition_to_path}, since a part of $\pi$ corresponding to a codeword $\cw$ always begins with $\len(\cw)$ consecutive integers. More precisely, for any part $\pi$ of $P(C,\dyck)$, if $\cw(\pi)=\cw \in C$ then, writing $\pi^{\min} = \min(k:k \in \pi)$, necessarily
\begin{equation}\label{pi_star_trick}
\{\pi^{\min},\pi^{\min}+1,\ldots,\pi^{\min}+\len(\cw)-1\}\subset \pi,
\end{equation}
by the construction of $P(C,\dyck)$. For example, for the Dyck path $\dyck$ in Figure~\ref{fig:dyckpath}, the third part of the partition $P(C^*,\dyck)$ is $\pi_3=\{7,8,9,10,13,18\}$. Given the information that $\cw(\pi_3)=0123$, if we additionally know that $\min(k:k \in \pi_3)=7$ then necessarily also $\{8,9\} \subset \pi_3$, so we may construct $\pi_3$ from the set $\pi_3'=\{7,10,13,18\}$ and the information that $\cw(\pi_3)=0123$. 
Applying this to all of the parts, we see that $\pi$ may be reconstructed from the data in (\ref{eq:code_info}) together with the sets 
\begin{equation}\label{eq:example_piprime}
\{\{1,6\},\{4\},\{7,10,13,18\}, \{11\}, \{14\},\{16\}\}. 
\end{equation}

The example in Figure~\ref{fig:swalk} may be helpful in understanding the coming paragraph.
For a part $\pi$ of $P(C,\dyck)$, writing 
\[
\pi'= \pi\setminus \{k \in \pi: \pi^{\min} < k < \pi^{\min}+\len(\cw(\pi))\}, 
\]
then in the setting of Lemma~\ref{lem:partition_to_path}, the inclusion (\ref{pi_star_trick}) implies that we may recover $\dyck$ from the data 
$\{(\pi',c(\pi)),\pi \in P(C,\dyck)\}$. However, slightly more is true. We may ``compress'' the collection $(\pi',\pi \in P(C,\rw))$ by removing gaps, to form a partition of a {\em consecutive} sequence of integers, without sacrificing recoverability, as follows. Write $\Pi' = \bigcup_{\pi \in P(C,\rw)} \pi'$, and for $\pi \in P(C,\rw)$, let 
\[
\hat{\pi} = \{\mathrm{rank}(i,\Pi'),i \in \pi'\},
\]
where $\mathrm{rank}(i,\Pi') = \#\{j \in \Pi': j \le i\}$. 
For example, if $\Pi'$ is the collection of sets in (\ref{eq:example_piprime}), then $(\hat{\pi},\pi \in P(C,\rw))$ is the partition 
\[
\{\{1,3\},\{2\},\{4,5,7,10\}, \{6\}, \{8\},\{9\}\}\,.
\]
\begin{cor}\label{cor:partition_to_path}
For any code $C$, any Dyck path $\dyck$ is recoverable from the data 
$\{(\hat{\pi},\cw(\pi)),\pi \in P(C,\dyck)\}$.
\end{cor}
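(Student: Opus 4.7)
The plan is to reduce Corollary~\ref{cor:partition_to_path} to Lemma~\ref{lem:partition_to_path}: I will recover the uncompressed data $\{(\pi,\cw(\pi)):\pi \in P(C,\dyck)\}$ from the compressed data $\{(\hat\pi,\cw(\pi)):\pi \in P(C,\dyck)\}$, and then invoke that lemma. Each $\pi$ is reconstructed from $(\hat\pi,\cw(\pi))$ in two steps: invert the rank map on $\hat\pi$ to obtain $\pi'$, and then use (\ref{pi_star_trick}) to recover $\pi=\pi'\cup\{\pi^{\min}+1,\ldots,\pi^{\min}+\len(\cw(\pi))-1\}$. The inversion is the part that needs work, since it requires knowing the subset $\Pi'=\bigcup_{\pi}\pi'$ of $[2\mk]$ explicitly; so the heart of the proof is to reconstruct $\Pi'$ from the given data.

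I will reconstruct $\Pi'$ by induction on compressed ranks. Process $r=1,2,\ldots,|\Pi'|$ in order and determine the corresponding original position $o(r)\in\Pi'$ (so $\Pi'=\{o(1),\ldots,o(|\Pi'|)\}$). The base case is $o(1)=1$, because the part of $P(C,\dyck)$ containing $1$ must have $\pi^{\min}=1$, so $1\in\Pi'$. For the inductive step at rank $r$, split on whether $r=\hat\pi^{\min}$ for some (unique) part $\pi$. If so, then $o(r)=\pi^{\min}$, and the $\len(\cw(\pi))-1$ positions $\pi^{\min}+1,\ldots,\pi^{\min}+\len(\cw(\pi))-1$ are by definition in $\pi\setminus\pi'$, hence outside $\Pi'$; this gives $o(r+1)=o(r)+\len(\cw(\pi))$. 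Otherwise I claim $o(r)+1\in\Pi'$, so $o(r+1)=o(r)+1$.

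Verifying this last claim is the one nontrivial point. The removed set admits the description
\[R:=[2\mk]\setminus\Pi'=\bigsqcup_{\sigma}\{\sigma^{\min}+1,\ldots,\sigma^{\min}+\len(\cw(\sigma))-1\},\]
a disjoint union because the parts of $P(C,\dyck)$ are disjoint. If $o(r)+1\in R$, then $o(r)+1=\sigma^{\min}+j$ for some part $\sigma$ and some $1\le j\le\len(\cw(\sigma))-1$. When $j=1$, this forces $o(r)=\sigma^{\min}$, contradicting the assumption that $o(r)$ is not a part minimum; when $j\ge 2$, it gives $o(r)=\sigma^{\min}+(j-1)$ with $1\le j-1\le\len(\cw(\sigma))-2$, placing $o(r)\in R$ and contradicting the inductive hypothesis $o(r)\in\Pi'$. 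Hence $o(r)+1\in\Pi'$, closing the induction. Once $\Pi'$ is in hand, inverting the rank map on each $\hat\pi$ recovers $\pi'$, (\ref{pi_star_trick}) recovers $\pi$, and Lemma~\ref{lem:partition_to_path} then yields $\dyck$.
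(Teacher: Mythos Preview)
Your argument is correct and follows the paper's strategy exactly: recover each $\pi'$ from $(\hat\pi,\cw(\pi))$ by inverting the rank map, reinstate the deleted consecutive positions via (\ref{pi_star_trick}), and then apply Lemma~\ref{lem:partition_to_path}. The paper inverts the rank map by the closed-form formula
\[
k' \;=\; k + \sum_{\{\pi\in P(C,\dyck):\,\hat\pi^{\min}<k\}}\bigl(\len(\cw(\pi))-1\bigr),
\]
whereas you compute the same inverse by an inductive recurrence; unrolling your recurrence yields precisely this sum, so the two are the same computation in different dress. One small omission: in the case $r=\hat\pi^{\min}$ you verify that $\pi^{\min}+1,\ldots,\pi^{\min}+\len(\cw(\pi))-1\notin\Pi'$, but to conclude $o(r+1)=o(r)+\len(\cw(\pi))$ you also need $\pi^{\min}+\len(\cw(\pi))\in\Pi'$; this follows by the same disjointness-of-blocks argument you use in the other case (if that position were $\sigma^{\min}+j$ with $1\le j\le\len(\cw(\sigma))-1$, the interval $\{\sigma^{\min},\ldots,\sigma^{\min}+j\}\subset\sigma$ would meet $\{\pi^{\min},\ldots,\pi^{\min}+\len(\cw(\pi))-1\}\subset\pi$).
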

\begin{proof}
For each $k \in \bigcup_{\pi \in P(C,\dyck)} \hat{\pi}$, let 
\[
k' = k + \sum_{\{\pi \in P(C,\dyck): \hat{\pi}^{\mathrm{min}}< k\}} (\len(c(\pi))-1).
\]
Then for all $\pi \in P(C,\dyck)$, it holds that $\pi'=\{k':k \in \hat{\pi}\}$, so 
$\{\pi',\pi \in P(C,\dyck)\}$ is recoverable from $\{(\hat{\pi},\cw(\pi)),\pi \in P(C,\dyck)\}$. By the observations preceding the corollary and Lemma~\ref{lem:partition_to_path}, the result follows. 
\end{proof}
This corollary gives us a way of using codes to bound the number of walks in a graph. 
Given a code $C=\{\cw^{(b)},1 \le b \le a\}$, 
and $ b \in [a]$, for a Dyck path $\dyck$ write
\[
P_{b}(C,\dyck) = \{\pi \in P(C,\dyck): \cw(\pi)=\cw^{(b)}\} 
\]
for the set of parts of $P(C,\rw)$ corresponding to the codeword $\cw^{(b)}$.
Then, for $1 \le b \le a$, let 
\[
t_b= t_b(C,\dyck) = |P_{b}(C,\dyck)|\, 
\]
denote the number of times code word $\cw^{(b)}$ is used in the partitioning. We then set $\vec{t}(C,\dyck)=(t_b(C,\dyck),b \in [a])$; this definition agrees with the one from Section~\ref{sub:overview}. 
If $\rw$ is a closed walk in a tree $T$, we also write $P(C,\rw)=P(C,D(\rw))$ and $\vec{t}(C,\rw)=(t_b(C,\rw),b \in [a]):=(t_{b}(C,D(\rw)),b \in [a])$. This allows us to decompose the set of walks $\rw$ of a certain length according to number of occurrences $t_b$ of each code word in $P(C,\rw)$. 

In the next proposition we bound the number of closed walks from a 
vertex to itself with a given ``profile'' $\vec{t}=(t_1,\ldots,t_a)$.  
Recall from Section~\ref{sub:overview} that 
for a vertex $v$ in a tree $T$, we write $N_{C,\vec{t}}(v)$ for the number of closed walks $\rw$ from $v$ to $v$ in $T$ with $t_b(C,\rw)=t_b$ for all $1\le b \le a$. Note that the length of any such walks is $2\mk=2\mk(\vec{t})=\sum_{b=1}^a t_b(\len(\cw^{(b)})+\f(\cw^{(b)}))$. Recall also that for a meander $\cw=(c_0,\ldots,c_j)$, we write 
\[
\Delta_T(\cw) = \max_{v \in V}|\{\mbox{walks $\rw=(w_0,\ldots,w_j)$ in }T: w_0=v, D(\rw)=\cw\}|\,.
\] 
\begin{figure}[ht]
\begin{subfigure}[b]{0.44\textwidth}
\begin{centering}
		\includegraphics[width=\textwidth,page=4]{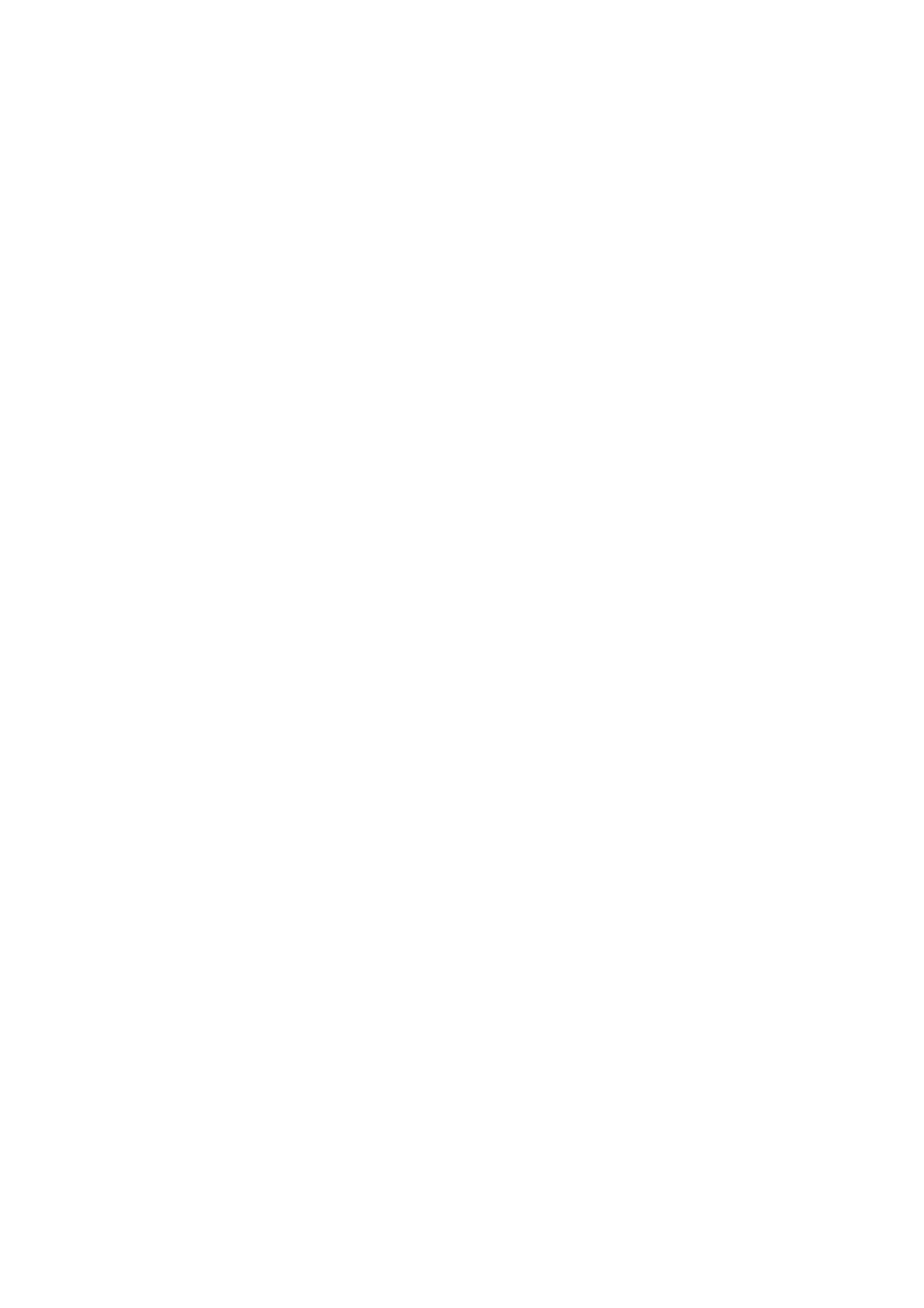}
                \caption{The contour walk $\rw$ of this tree is $abcbdefeghgedijidba$. The corresponding Dyck path $D(\rw)$ is $0121234345432343210$.}
\end{centering}
\end{subfigure}%
\quad
\begin{subfigure}[b]{0.44\textwidth}
\begin{centering}	\includegraphics[width=\textwidth,page=5]{swalk_code.pdf}
                \caption{Let $C=\{010,0121,0123\}$; then $P(C,\rw)$ has parts $\{1,2,3,18\}$, $\{4,5,6,7,12,17\}$, $\{8,9,10,11\}$ and $\{13,14,15,16\}$.}
\end{centering}
\end{subfigure}%
\\
\smallskip
\begin{subfigure}[b]{0.48\textwidth}
\begin{centering}
		\includegraphics[width=\textwidth,page=8]{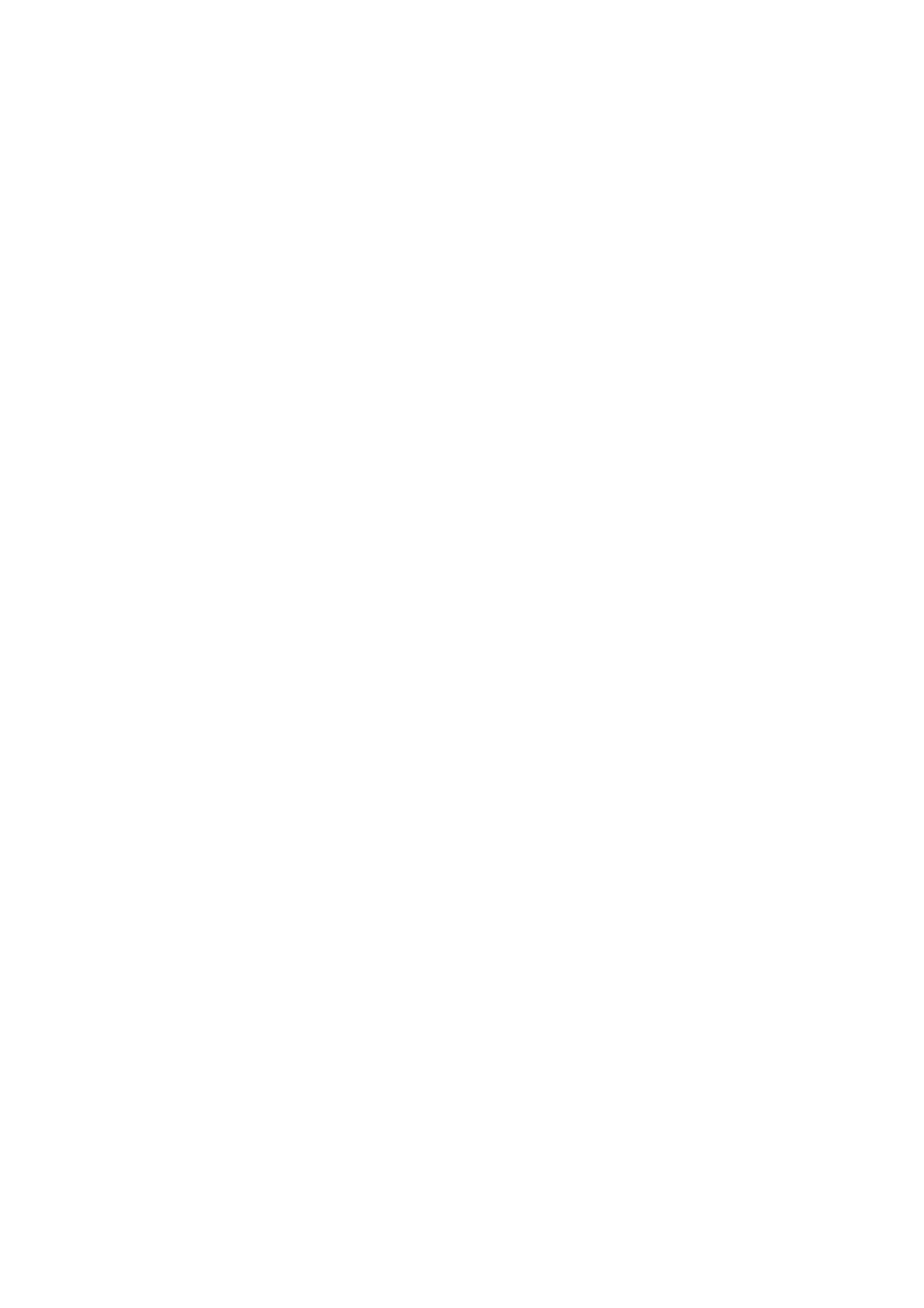}
                \caption{A plot of $(D(\rw)_r:r+1 \in \Pi'\cup \{\len(\rw)+1\})$.}
\end{centering}
\end{subfigure}%
\quad
\begin{subfigure}[b]{0.48\textwidth}
\begin{centering}
		\includegraphics[width=\textwidth,page=6]{swalk.pdf}
                \caption{The walk $S$ corresponding to codeword $0121$.}
\end{centering}
\end{subfigure}%
\caption{
For $w$ and $C$ as above, the collection $\{\pi':\pi \in P(C,\rw)\}$ equals $\{\{1,18\},\{4,7,12,17\},\{8,11\},\{13,16\}\}$, and $\{\hat{\pi}:\pi \in P(C,\rw)\}$ equals $\{\{1,10\},\{2,3,6,9\},\{4,5\},\{7,8\}\}$. With $\cw^{(2)}=0121$ and $\cw^{(3)}=0123$, then $\Pi_2 = \{1,4,5,7,8,10\}$ and $\Pi_3 = \{2,3,6,9\}$. 
}\label{fig:swalk}
\end{figure}
\begin{prop}\label{prop:code_bound}
Fix a code $C=\{\cw^{(b)},1 \le b \le a\}$, and
a vector of non-negative integers $\vec{t}=(t_1,\ldots,t_a)$, and let
$q(\vec{t})=\sum_{b=1}^at_b\cdot (1+\f(\cw^{(b)}))$. Then
for any tree $T$ and any vertex $v$ of $T$, 
\begin{equation}\label{eq:code_bound}
N_{C,\vec{t}}(v) \le 
\binom{q(\vec{t})}{t_b{(1+\f(\cw^{(b)}))},1 \le b \le a}\prod_{b=1}^a 
\frac{1}{t_b{(1+\f(\cw^{(b)}))}+1} {t_b{(1+\f(\cw^{(b)}))} +1\choose t_b}\cdot \Delta_T(\cw^{(b)})^{t_b}\, .
\end{equation}
\end{prop}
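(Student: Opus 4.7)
The plan is to use Corollary~\ref{cor:partition_to_path} to rewrite
\[
N_{C,\vec{t}}(v)=\sum_{\dyck:\,\vec{t}(C,\dyck)=\vec{t}}|\{\rw:w_0=v,\,D(\rw)=\dyck\}|
\]
and to bound each factor separately via two intermediate claims: (A) for every Dyck path $\dyck$ with $\vec{t}(C,\dyck)=\vec{t}$ one has $|\{\rw:w_0=v,\,D(\rw)=\dyck\}|\le\prod_{b=1}^a\Delta_T(\cw^{(b)})^{t_b}$, and (B) the number of Dyck paths $\dyck$ with $\vec{t}(C,\dyck)=\vec{t}$ is at most
\[
\binom{q(\vec{t})}{t_b(1+\f(\cw^{(b)})),\,1\le b\le a}\prod_{b=1}^a\frac{1}{t_b(1+\f(\cw^{(b)}))+1}\binom{t_b(1+\f(\cw^{(b)}))+1}{t_b}.
\]
Multiplying (A) and (B) and summing over $\dyck$ yields the proposition.

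Claim (A) I would prove by strong induction on $\len(\dyck)=2\mk$, matching the recursion defining $P(C,\dyck)$. If $\dyck$ has an internal zero at position $i$, then $\rw$ factors as two closed walks at $v$ whose profiles sum to $\vec{t}$, and induction finishes the case. Otherwise the codeword $\cw^{(p)}$ is the unique prefix of $\dyck$ in $C$; write $h=\f(\cw^{(p)})$. Then $\rw$ decomposes as (i) an initial length-$\len(\cw^{(p)})$ walk matching $\cw^{(p)}$, contributing at most $\Delta_T(\cw^{(p)})$ options by the very definition of $\Delta_T(\cw^{(p)})$; (ii) for each $0\le i<h$, the sub-walk on positions $[\sigma(i),\sigma(i+1)-1]$, which stays at distance $\ge h-i$ from $v$, so because $T$ is a tree cannot leave the subtree rooted at $w_{\sigma(i)}$ and therefore begins and ends at $w_{\sigma(i)}$ and is a genuine closed walk on a strictly shorter Dyck path to which induction applies; and (iii) the descent steps at the $\sigma(i+1)$, which are deterministic in a tree. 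The recursive definition of $t_b(C,\dyck)$ ensures the exponents add correctly.

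Claim (B) exploits the non-crossing structure of valid labeled compressed partitions. By Corollary~\ref{cor:partition_to_path}, Dyck paths with profile $\vec{t}$ inject into labeled compressed partitions of $[q(\vec{t})]$ with $t_b$ parts of size $1+\f(\cw^{(b)})$ carrying label $\cw^{(b)}$. A short induction on the recursion shows that such a valid partition is non-crossing in $[q(\vec{t})]$: the compressed positions of a top-level part form an \emph{umbrella} inside which the sub-Dyck-paths' partitions occupy the gaps between consecutive compressed positions, while sibling parts occupy disjoint compressed intervals. I would then overcount the non-crossing labeled partitions by (i) choosing which $t_b(1+\f(\cw^{(b)}))$ positions of $[q(\vec{t})]$ carry label $b$, giving the multinomial coefficient, and (ii) for each label $b$ \emph{independently}, choosing a non-crossing partition of its $t_b(1+\f(\cw^{(b)}))$ positions into $t_b$ blocks of equal size $1+\f(\cw^{(b)})$. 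Since any globally non-crossing partition restricts to a per-label non-crossing partition, this is an upper bound. The enumeration of non-crossing partitions of $m=ts$ points into $t$ blocks of equal size $s$ by $\frac{1}{m+1}\binom{m+1}{t}$ is a classical Fuss--Catalan / cycle-lemma identity, which I would cite as a black box.

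The main obstacle is Claim (B): identifying the correct overcounting scheme, verifying that the non-crossing structure survives the compression, and invoking the Fuss--Catalan count. Claim (A) is a routine induction once one uses the tree structure to argue that sub-walks are forced to be closed.
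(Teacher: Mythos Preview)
Your proposal is correct and follows essentially the same strategy as the paper. The paper also splits the count into the walk-embedding factor $\prod_b \Delta_T(\cw^{(b)})^{t_b}$ and a Dyck-path count, bounds the latter by first choosing the coarse partition $(\Pi_b)_{b}$ of $[q(\vec t)]$ via the multinomial, and then, for each $b$, bounds the number of ways to refine $\Pi_b$ into $t_b$ blocks of size $1+\f(\cw^{(b)})$. The only cosmetic difference is that where you invoke the Fuss--Catalan count of non-crossing partitions into equal-size blocks, the paper encodes the same data as a nonnegative lattice path with steps in $\{-1,\f(\cw^{(b)})\}$ and cites the cycle-lemma count $\frac{1}{t_b(1+\f(\cw^{(b)}))+1}\binom{t_b(1+\f(\cw^{(b)}))+1}{t_b}$; these are two standard phrasings of the same enumeration.
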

\begin{proof}
Fix a closed walk $\rw$ in $T$ and a codeword $\cw \in C$. If $\pi \in P(C,\rw)$ and $c(\pi)=\cw$ then $|\pi|=\len(\cw)+\f(\cw)$ and $|\hat{\pi}|=|\pi|-(\len(\cw)-1)=1+\f(\cw)$. It follows that $\hat{P}(C,\rw):=(\hat{\pi},\pi \in P(C,\rw))$ is a partition of $[q(\vec{t})]$. Writing $\Pi_b=\cup_{\pi \in P(C,\rw):\cw(\pi)=\cw^{(b)}}\hat{\pi}$, then $(\Pi_b,1 \le b \le a)$ is another, coarser partition of $[q(\vec{t})]$, in which $|\Pi_b|=t_b(1+\f(\cw^{(b)}))$. Accordingly, the number of possibilities for the partition $(\Pi_b,1 \le b \le a)$, as $\rw$ varies over walks with $(t_b(C,\rw),1 \le b \le a)=\vec{t}$, is at most 
\begin{equation}\label{eq:bigpi_bound}
{q(\vec{t}) \choose t_b{(1+\f(\cw^{(b)}))},1 \le b \le a}. 
\end{equation}

We next focus our attention on the number of words which could give rise to the partition $(\Pi_b,1 \le b \le a)$. Figure~\ref{fig:swalk} should be useful in understanding the arguments of this paragraph.  Consider a given value of $b$, write $\f=\f(\cw^{(b)})$, and define a sequence $S=(S(p),0 \le p \le |\Pi_b|)$ as follows. List the elements of $\Pi_b$ in increasing order as $p_1,\ldots,p_{|\Pi_b|}$. 
Set $S(0)=0$, $S(1)=\f$, and for $1 < i \le |\Pi_b|$ let 
\[
S(i)=	\begin{cases}
			S(i-1)+\f	& \mbox{ if } p_{i}\mbox{ and }p_{i-1}\mbox{ are in different parts of }\hat{P}(C,\rw)\\
			S(i-1)-1		& \mbox{ if } p_{i}\mbox{ and }p_{i-1}\mbox{ are in the same part of }\hat{P}(C,\rw)\, .
		\end{cases}
\]
For any part $\pi$ of $P(C,\rw)$ with $\cw(\pi)=\cw^{(b)}$, we have $|\hat{\pi}|=|\pi'|=h+1$ by construction; from this it is immediate that $S(|\Pi_b|)=0$. 
Moreover, if $\sigma,\rho \in P(C,\rw)$ are distinct and $\rho_j <\sigma^{\min} < \rho_{j+1}$, where $\rho_j$ and $\rho_{j+1}$ are consecutive elements of $\rho$, then in fact $\sigma \subset \{\rho_j+1,\ldots,\rho_{j+1}-1\}$; this is an immediate consequence of the definition of $P(C,\rw)$. This nesting structure implies that there is some part $\pi \in P(C,\rw)$ with $\cw(\pi)=\cw^{(b)}$ such that if $\hat{\pi}^{\min}=p_i$, then 
\[
\{p_i,p_{i+1},\ldots,p_{i+\f}\} = \hat{\pi}. 
\]
We then have
\[
(S(i),S(i+1),\ldots,S(i+\f))=(S(i-1)+\f,S(i-1)+\f-1,\ldots,S(i-1))\, .
\]
It follows by induction that $S$ is non-negative. Moreover, the nesting structure likewise implies that the collection $(\pi \in P(C,\rw),\cw(\pi)=\cw^{(b)})$ can be recovered from $\Pi_b$ and $S$. The number of walks $S$ of length $|\Pi_b|=t_b(1+\f)$ with step sizes in $\{-1,\f\}$, starting and finishing at zero and staying weakly above zero  is precisely $\frac{1}{t(1+\f)+1}{t(1+\f)+1\choose t}$ - see, e.g., \citep[Theorem 10.4.1]{MR3409351}. 

We apply the preceding bound to each set $\Pi_b$. Combined with (\ref{eq:bigpi_bound})  we see that, having constrained $\rw$ to satisfy that $(t_b(C,\rw),1 \le b \le a)=\vec{t}$, the number of possibilities for the collection
\[
\{(\hat{\pi},\cw(\pi)),\pi \in P(C,\rw)\}
\]
is at most 
\begin{equation}\label{eq:dyck_bound}
{q(\vec{t}) \choose t_b{(1+\f(\cw^{(b)}))},1 \le b \le a}\prod_{b=1}^a 
\cdot \frac{1}{t_b{(1+\f(\cw^{(b)}))}+1} {t_b{(1+\f(\cw^{(b)}))}+1 \choose t_b}\, .
\end{equation}
By Corollary~\ref{cor:partition_to_path}, it follows that the number of possibilities for the Dyck path $D(\mathrm{w})$ is bounded by the expression in (\ref{eq:dyck_bound}). 

Finally, to recover $\mathrm{w}$ from $D=D(\mathrm{w})$, it suffices to embed the subwalks corresponding to the parts of $P(C,w)$ into $G$. For a part $\pi$ of $P(C,w)$, having specified the location of $w_{\pi^{\min}-1}$, by definition there are at most $\Delta_T(\cw(\pi))$ possibilities for the subwalk $(w_p,p \in \pi)$. It follows that the number of walks with $D(\mathrm{w})=D$ is at most 
\[
\prod_{b=1}^a \Delta_T(\cw^{(b)})^{t_b}\, ,
\]
which completes the proof. 
\end{proof}
We are now prepared to prove Proposition~\ref{prop:kl_bd}.
\begin{proof}[Proof of Proposition~\ref{prop:kl_bd}]
We bound the multinomial coefficient in (\ref{eq:code_bound}) with the aid of the inequalities $(n/e)^n \le n! \le e\sqrt{n} (n/e)^n$. 
We obtain 
\begin{align*}
{q \choose t_b{(1+\f(\cw^{(b)}))},1 \le b \le a}
& \le e\sqrt{q} e^{-q} q^q \prod_{b=1}^a \frac{1}{(t_b(1+\f(\cw^{(b)})))!}\\
& = 
e\sqrt{q} \prod_{b=1}^a \frac{(q/e)^{t_b(1+\f(\cw^{(b)}))}}{(t_b(1+\f(\cw^{(b)})))!} \\
& \le 
e\sqrt{q}
\prod_{b=1}^a \left(\frac{q}{t_b(1+\f(\cw^{(b)}))}\right)^{t_b(1+\f(\cw^{(b)}))}\, .
\end{align*}
We also have  
\[
\frac{1}{t_b{(1+\f(\cw^{(b)}))}+1}
{t_b{(1+\f(\cw^{(b)}))}+1 \choose t_b}\le
{t_b{(1+\f(\cw^{(b)}))} \choose t_b} \le (1 \vee 2e\f(\cw^{(b)}))^{t_b}\, ,
\]
the second inequality holding 
since both sides are $1$ when $\f(\cw^{(b)})=0$, and when $\f(\cw^{(b)}) \ge 1$ we have 
\[
{t_b{(1+\f(\cw^{(b)}))} \choose t_b} \le
\left(\frac{e t_b(1+\f(\cw^{(b)}))}{t_b}\right)^{t_b} = (e(1+\f(\cw^{(b)})))^{t_b} \le (2e\f(\cw^{(b)}))^{t_b}. 
\]

Using these in the upper bound of Proposition~\ref{prop:code_bound} yields the 
bound
\begin{align*}
N_{C,\vec{t}}(v)
& \le e\sqrt{q} \prod_{b=1}^a \big((1 \vee 2e\f(\cw^{(b)}))\Delta_T(\cw^{(b)})\big)^{t_b} \left(\frac{q}{t_b(1+\f(\cw^{(b)}))}\right)^{t_b(1+\f(\cw^{(b)}))}\\
& 
= 
e\sqrt{q}  \Delta^k
\prod_{b=1}^a 
\left(\frac{(1 \vee 2e\f(\cw^{(b)}))\Delta_T(\cw^{(b)})}
{\Delta^{(\len(\cw^{(b)})+\f(\cw^{(b)}))/2}}\right)^{t_b}
\left(\frac{q}{t_b(1+\f(\cw^{(b)}))}\right)^{t_b(1+\f(\cw^{(b)}))}, 
\end{align*}
where in the second line we have multiplied and divided by 
$\Delta^k=\Delta^{\sum_{b=1}^a t_b(\len(\cw^{(b)})+\f(\cw^{(b)}))/2}$.
Setting $\rho_b = t_b(1+\f(\cw^{(b)}))/q$ for $1 \le b \le a$, we may rewrite the above bound as
\[
e\sqrt{q}  \Delta^k
\exp\Big(
q \sum_{b=1}^a \rho_b \log\big(\frac{g_b}{\rho_b}\big)
\Big)
\]
Since $\sum_{b=1}^a \rho_b = 1$, by the non-negativity of the 
Kullback-Leibler divergence, 
\[
\sum_{b=1}^a \rho_b \log\Big(\frac{g_b}{\rho_b}\Big)
\le 
\log(\sum_{b=1}^a g_b)\, ,
\]
from which the result follows. 
\end{proof}

\subsection{Refined bounds via nonbacktracking walks} 
\label{sec:refined}
As discussed in the proof overview, 
in order to strengthen the above bounds, we need to refine our decomposition of walks in trees according to the times when they visit high-degree vertices. This is achieved by classifying vertices into three types
according to their degrees (``high,'' ``moderate,'' and ``low'' degrees, defined below) and partitioning walks not only by their
distances to the starting vertex but also by the sequence of types of the vertices. The prefix codes may
be easily extended to encode the type information as well. Part of our strategy will be to separately quantify the contribution to the number of closed walks arising from ``backtracking'' subwalks, of type $xyx$ or $xyzyx$, where $x$ is a vertex of medium or high degree and $y$ and $z$ are vertices of low degree. (We shall call these \emph{simple} and \emph{double} backtracking steps, respectively.) 
This will allow us to exploit finer structural properties of uniformly random trees, which we establish in Sections \ref{sec:structural} and \ref{sec:treesurgery}.

Formally, we define an {\em $(M,H)$-tree} to be a $4$-tuple $T=(V,E,M,H)$,
where $(V,E)$ is a tree and $M,H \subset V$ are disjoint sets of vertices; we will eventually take $M$ to be the set of ``moderately high degree vertices'' and $H$ to be the set of ``very high degree vertices.''  Given a walk $\rw=(w_i,0 \le i \le j)$ in an $(M,H)$-tree, we define a {\em type sequence} $\type=(\type_i(T,\rw),0 \le i \le j)$, by setting 
\[
\type_i(T,\rw) =
	\begin{cases}
            h	& \mbox{ if }w_i \in H\\
		m	& \mbox{ if }w_i \in M\\
		\ell	 &\mbox{ if }w_i \in V\setminus (M\cup H) \, .
	\end{cases}
\]

(Here $h,m$, and $\ell$ stand for "high," "moderate", and "low", indicating the degree type of the vertex visited at step $i$ of the walk.) We let $E_T(\rw) = ((\dist_T(w_0,w_i),\type_i(T,\rw)),0 \le i \le j)$, and call $E_T(\rw)$ the {\em encoding path} corresponding to the walk $\rw$. Note that extracting the first coordinate from each of the pairs in the encoding path yields the Dyck path $D_T(\rw)$.

Given a walk $\rw$ in $T$: 
\begin{itemize}
    \item A {\em simple backtracking step} is a subwalk $(w_i,w_{i+1},w_{i+2})$ of $\rw$ such that $w_{i+2}=w_i$; $w_i \in M \cup H$; $w_{i+1} \not \in M \cup H$; and $\dist_T(w_0,w_{i+1})>\dist_T(w_0,w_i)$. 
    \item A {\em double backtracking step} is a subwalk $(w_i,w_{i+1},w_{i+2},w_{i+3},w_{i+4})$ of $\rw$ such that $w_{i+4}=w_i$; $w_i \in M \cup H$; $w_{i+1},w_{i+2} \not \in M \cup H$; and $\dist_T(w_0,w_{i+2})>\dist_T(w_0,w_{i+1})>\dist_T(w_0,w_i)$. 
\end{itemize}
In fact, the condition that $\dist_T(w_0,w_{i+2})>\dist_T(w_0,w_{i+1})$ may be deduced from the other conditions in the definition of double backtracking steps, but we find it clearer to write it explicitly.
An {\em $(M,H)$-doubly-nonbacktracking walk} (or \emph{$(M,H)$-walk} for short) is a walk $\rw$ in $T$ that contains no simple or double backtracking steps.

An {\em $(M,H)$-meander} is a sequence $\dyck=((d_i,\type_i),0 \le i \le j)$ which is the encoding path of some $(M,H)$-walk; it is an excursion if $d_j=0$. We define $\f(\dyck)=d_j$ and call $\f(\dyck)$ the final value of $\dyck$. An $(M,H)$-meander $\dyck$ is an {\em $(M,H)$-excursion} if $\f(\dyck)=0$.
A set $C$ of $(M,H)$-meanders is an $(M,H)$-code if for any $(M,H)$-excursion $\dyck=((d_i,\type_i),0 \le i \le j)$ of positive length, there is a unique $(M,H)$-meander $\cw \in C$ such that $\cw$ is a prefix of $\dyck$.

The ``trivial'' $(M,H)$-code consists of nine words, each of the form $((0,*),(1,*))$, where each entry of ``$*$'' may be replaced by an $\ell,m$, or $h$. Additionally, using $m/h$ to denote an entry which can be either an $m$ or an $h$, the simplest non-trivial $(M,H)$-code has eleven words and is  shown in Figure~\ref{fig:hcode}. Using the above shorthand, it can be written as 
\[
\{((0,\ell),(1,*)),((0,m/h),(1,m/h)),((0,m/h),(1,\ell),(2,m/h))\}\, .
\]

\begin{figure}[ht]
		\includegraphics[width=0.7\textwidth]{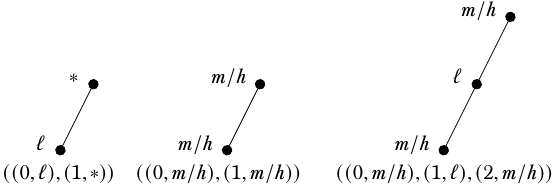}
                \caption{The simplest non-trivial $(M,H)$-code.}
\label{fig:hcode}
\end{figure}

For an $(M,H)$-excursion $\dyck=((d_i,\tau_i),0 \le i \le j)$ and an $(M,H)$-code $C$, the partition $P(C,\dyck)$ is defined just as in Section~\ref{sub:decomps}. 
Fix any part $\pi$ of $P(C,\dyck)$, list the elements of $\pi$ in increasing order as $\pi(1),\ldots,\pi(s)$, and let $\pi(0)=\pi(1)-1$. Then 
there is a unique $(M,H)$-meander $\cw(\pi) = \cw(\pi,C,\dyck) \in C$ such that 
\[
\cw(\pi) 
= ((d_{\pi(r)}-d_{\pi(0)},\type_{\pi(r)}),0 \le r \le s')\, ,
\]
for some $s' \le s=|\pi|$. For a closed walk $\rw$ in an $(M,H)$-tree $T$, we also write $P(C,\rw)=P(C,E(\rw)))$. Like in Section~\ref{sub:decomps}, if walks $\rw,\rw'$ have $E_T(\rw)=E_T(\rw')$, then $P(C,\rw)=P(C,\rw')$. Thus, for any tree $T$, any vertex $v$ of $T$, and any code $C$, we may rewrite $N_{\mk}(v,T)$, the number of walks from $v$ to $v$ of length $2\mk$, as 
\[
\sum_{\{\mathrm{partitions}~P~\mathrm{of}~[2k]\}} 
\sum_{\{(M,H)-\mathrm{excursions}~\dyck:P(C,\dyck)=P\}} \#\{\mbox{walks }\rw\mbox{ in }T: w_0=w_{2k}=v,E(\rw)=\dyck\}\, .
\]

The definitions just preceding Proposition~\ref{prop:code_bound} also carry over naturally to the setting of $(M,H)$-trees. 
For an $(M,H)$-tree $T$, a node $v$ of $T$, and an $(M,H)$-meander $\cw=((c_i,\type_i),0 \le i \le j)$, let 
\[
\mathrm{deg}_T(v,\cw) = \#\{\mbox{$(M,H)$-walks $\rw=(w_0,\ldots,w_j)$ in }T: w_0=v,E_T(\rw)=\cw\}, 
\]
and let $\Delta_T(\cw) = \max\{\mathrm{deg}_T(v,\cw): v\mbox{ a node of }T\}$.

Given an $(M,H)$-code $C=\{\cw^{(b)},1 \le b \le a\}$ 
and a codeword $\cw^{(b)}=((c_i^{(b)},\type_i^{(b)}),0 \le i \le m) \in C$, for an $(M,H)$-walk $\rw$ write 
\[
P_b(C,\rw) = \{\pi \in P(C,\rw): c(\pi)=\cw^{(b)} \} 
\]
for the set of parts of $P(C,\rw)$ corresponding to the $(M,H)$-meander $\cw^{(b)}$. 
Then, for $1 \le b \le a$, let 
\[
t_b= t_b(C,\rw) = |P_{b}(C,\rw)|\, .
\]

Further recycling the notation of Section~\ref{sub:decomps}, given an $(M,H)$-code $C=(\cw^{(b)},1 \le b \le a)$, for a vector of non-negative integers $\vec{t}=(t_1,\ldots,t_a)$ and a node $v$ in an $(M,H)$-tree $T$, 
we denote by $N_{C,\vec{t}}(v)$ the number of $(M,H)$-walks $\rw$ from $v$ to $v$ in $T$ with $t_b(C,\rw)=t_b$ for all $1\le b \le a$. The next proposition bounds the number of such walks from above.

\begin{prop}
\label {prop:ij_code_bound}
Fix an $(M,H)$-tree $T=(V,E,M,H)$ and an $(M,H)$-code $C=\{\cw^{(b)},1 \le b \le a\}$, and write
\begin{equation}\label{eq:epsdef}
    \eps =\eps(C,T) := (2a)^2 \max_{b \in [a]} \frac{\Delta_T(\cw^{(b)})^{\frac{2}{\len(\cw^{(b)})+\f(\cw^{(b)})}}}{\Delta_T}\, 
\end{equation}
Then for any vector $\vec{t}=(t_1,\dots,t_a)\in \Z_{\ge 0}^a$, setting
$\mk(\vec{t}):=\sum_{b \in [a]} t_b(\len(\cw^{(b)})+\f(\cw^{(b)}))/2$, 
we have that
\[
  \max_{v \in T} N_{C,\vec{t}}(v)\leq 2^a\,(\eps\,\Delta_T)^{\mk(\vec{t})}.
\]
\end{prop}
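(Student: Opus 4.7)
The plan is to extend the decomposition-based counting argument of Proposition~\ref{prop:code_bound} to the $(M,H)$-setting and then apply elementary estimates. First, I will verify that the combinatorial machinery of Section~\ref{sub:decomps}—the partition $P(C,\dyck)$, the inclusion~(\ref{pi_star_trick}), the compressed sets $\hat\pi$, and the reconstruction results Lemma~\ref{lem:partition_to_path} and Corollary~\ref{cor:partition_to_path}—transfers verbatim to $(M,H)$-excursions. The key observation is that although encoding paths $E_T(\rw)$ now carry type labels in addition to distances, the nesting structure of the parts is still determined solely by the distance coordinates; the type labels are simply inherited from the codewords and do not interact with the recursive decomposition. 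Similarly, the counting of admissible shapes for each $\Pi_b$ via walks with steps in $\{-1,\f(\cw^{(b)})\}$ is unaffected, and the final embedding step into $T$ is bounded by $\Delta_T(\cw^{(b)})^{t_b}$ by the $(M,H)$-version of the definition of $\Delta_T(\cw)$. This yields the same-shape bound as Proposition~\ref{prop:code_bound}:
\[
N_{C,\vec{t}}(v)\le \binom{q(\vec{t})}{t_b(1+\f(\cw^{(b)})),\,1\le b\le a}\prod_{b=1}^a\frac{\binom{t_b(1+\f(\cw^{(b)}))+1}{t_b}}{t_b(1+\f(\cw^{(b)}))+1}\Delta_T(\cw^{(b)})^{t_b},
\]
where $q(\vec{t})=\sum_{b=1}^a t_b(1+\f(\cw^{(b)}))$.

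Next I will bound each factor by the simplest available means. The multinomial coefficient is at most $a^{q(\vec{t})}$, as it is one term of the multinomial expansion of $(1+\cdots+1)^{q(\vec{t})}$ with $a$ summands. Each Catalan-type factor satisfies $\tfrac{1}{n+1}\binom{n+1}{k}\le\binom{n}{k}\le 2^n$, so the product over $b$ is at most $2^{q(\vec{t})}$. Setting $M:=\max_{b\in[a]}\Delta_T(\cw^{(b)})^{2/(\len(\cw^{(b)})+\f(\cw^{(b)}))}$, the definition of $M$ yields $\Delta_T(\cw^{(b)})^{t_b}\le M^{t_b(\len(\cw^{(b)})+\f(\cw^{(b)}))/2}$ for each $b$, whence $\prod_{b=1}^a\Delta_T(\cw^{(b)})^{t_b}\le M^{\mk(\vec{t})}$.

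Finally, since every codeword satisfies $\len(\cw^{(b)})\ge 1$, we have $1+\f(\cw^{(b)})\le\len(\cw^{(b)})+\f(\cw^{(b)})$, and therefore $q(\vec{t})\le 2\mk(\vec{t})$, so $(2a)^{q(\vec{t})}\le(4a^2)^{\mk(\vec{t})}$. Recalling that $\eps\,\Delta_T=(2a)^2 M$, the estimates combine to give
\[
N_{C,\vec{t}}(v)\le (2a)^{q(\vec{t})}\,M^{\mk(\vec{t})}\le (4a^2 M)^{\mk(\vec{t})}=(\eps\,\Delta_T)^{\mk(\vec{t})}\le 2^a(\eps\,\Delta_T)^{\mk(\vec{t})},
\]
which is the desired bound. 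The main obstacle will be the first step: carefully verifying that the inclusion (\ref{pi_star_trick}), the compression map $\pi\mapsto\hat\pi$, and the associated $\{-1,\f\}$-walk count all remain valid in the presence of type labels and under the $(M,H)$-doubly-nonbacktracking constraint. Once that groundwork is laid, the remaining calculation is routine algebra.
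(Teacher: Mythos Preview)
Your proof is correct and follows essentially the same approach as the paper's: both invoke the $(M,H)$-analogue of Proposition~\ref{prop:code_bound}, then bound the multinomial by $a^{q(\vec t)}$, the Catalan-type factors by powers of $2$, and the embedding product by $M^{\mk(\vec t)}$ with $M=\max_b\Delta_T(\cw^{(b)})^{2/(\len(\cw^{(b)})+\f(\cw^{(b)}))}$. Your use of $\tfrac{1}{n+1}\binom{n+1}{k}\le\binom{n}{k}\le 2^n$ is slightly sharper than the paper's $2^{n+1}$, so you actually obtain $(\eps\Delta_T)^{\mk(\vec t)}$ without the extra $2^a$, making that factor superfluous in the stated bound.
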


Crucially, the upper bound established for $N_{C,\vec{t}}(v)$ does not depend on $v$, and only depends on $\vec{t}$ via $k(\vec{t})$. 
The form in which we state the bound may seem curious, since we could cancel the factor of $\Delta_T$ with the denominator in \eqref{eq:epsdef}. We wrote the bound in this way because $\eps$ will show up again later, in an expression where such a cancellation is not possible.
Now, for a given value of $\mk\in \N$, there are at most $a^{2\mk}$ choices for the type vector $\vec{t}$ with $\mk(\vec{t})=\mk$. 
Therefore, letting $N^{M,H}_\mk(v,T)$ denote the number of closed $(M,H)$-walks of length $2\mk$ from $v$ to $v$ in $T$ and setting $N^{M,H}_\mk(T)=\max_{v\in V} N^{M,H}_{\mk}(v,T)$,  
we have the following corollary.
\begin{cor}\label{cor:ij_code_bound} 
For any $(M,H)$-tree $T=(V,E,M,H)$ and $(M,H)$-code $C=\{\cw^{(b)},1 \le b \le a\}$, 
for any integer $\mk \ge 0$ we have 
$N^{M,H}_{\mk}(T) \le 2^a(a^2\eps \Delta_T)^{\mk}$, where $\eps=\eps(C,T)$ is as in \eqref{eq:epsdef}. 
\end{cor}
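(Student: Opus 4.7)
The plan is to bound $N^{M,H}_{\mk}(v,T)$ by grouping walks according to their profile $\vec{t}(C,\rw)$, applying Proposition~\ref{prop:ij_code_bound} term-by-term, and counting the admissible profiles. Since $C$ is an $(M,H)$-code, every closed $(M,H)$-walk $\rw$ of length $2\mk$ from $v$ to $v$ has a unique decomposition $P(C,\rw)$ and hence a well-defined profile $\vec{t}(C,\rw)\in\Z_{\ge 0}^a$ satisfying $\mk(\vec{t}(C,\rw))=\mk$. Grouping walks by profile yields
\[
N^{M,H}_{\mk}(v,T)=\sum_{\vec{t}\,:\,\mk(\vec{t})=\mk} N_{C,\vec{t}}(v),
\]
and Proposition~\ref{prop:ij_code_bound} bounds each summand by $2^a(\eps\,\Delta_T)^{\mk}$, uniformly in both $v$ and $\vec{t}$.

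It remains to verify that the number of admissible profile vectors is at most $a^{2\mk}$, as asserted in the lead-in to the corollary. Every codeword of any $(M,H)$-code satisfies $\len(\cw^{(b)})+\f(\cw^{(b)})\ge 2$: the shortest $(M,H)$-meander has a single step, giving $\len=\f=1$, and all longer meanders also satisfy $\len+\f\ge 2$ by a direct check. From the identity $2\mk=\sum_b t_b(\len(\cw^{(b)})+\f(\cw^{(b)}))$ we therefore obtain $\sum_b t_b\le \mk$, so admissible profiles form a subset of the simplex $\{\vec{t}\in\Z_{\ge 0}^a:\sum_b t_b\le \mk\}$; a crude counting (for instance, summing $\binom{n+a-1}{a-1}$ for $n\le \mk$) bounds this set by $a^{2\mk}$.

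Putting the two estimates together and taking the maximum over $v\in V$ gives
\[
N^{M,H}_{\mk}(T)\le a^{2\mk}\cdot 2^a(\eps\,\Delta_T)^\mk=2^a\,(a^2\eps\,\Delta_T)^\mk,
\]
as required. There is no genuine obstacle here: the combinatorial heart of the matter is already encapsulated in Proposition~\ref{prop:ij_code_bound}, and the corollary is essentially a union bound over profile vectors, with the loose factor $a^{2\mk}$ absorbed into the base of the exponential (harmlessly, since in applications the bound will be raised to the $1/(2\mk)$ power).
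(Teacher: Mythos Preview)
Your proof is correct and follows essentially the same approach as the paper, which simply asserts in one line that there are at most $a^{2\mk}$ profile vectors $\vec{t}$ with $\mk(\vec{t})=\mk$ and invokes Proposition~\ref{prop:ij_code_bound}. You supply more detail on this count; the cleanest justification is that $\len(\cw^{(b)})+\f(\cw^{(b)})$ is always even and positive (the first coordinate is a lattice path started at $0$), hence $\ge 2$, so $\sum_b t_b\le \mk$ and $\binom{\mk+a}{\mk}=\prod_{i=1}^{\mk}(1+a/i)\le (1+a)^{\mk}\le a^{2\mk}$ for $a\ge 2$ (with the case $a=1$ trivial since then the profile is unique).
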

When we later apply this bound, we will need the parameter $\eps$ to be much smaller than $\Delta_T^{-1/2}$; it is there that the right choice of code will be crucial.
\begin{proof}[Proof of Proposition \ref{prop:ij_code_bound}]

Fix $v\in V$ and $\vec{t}$ as above, and define $q=q(\vec{t})=\sum_{b=1}^at_b\cdot (1+\f(\cw^{(b)}))$. Precisely the same proof as for Proposition~\ref{prop:code_bound} gives the bound
\begin{align*}
    & N_{C,\vec{t}}(v) \\
    & \le
{q(\vec{t}) \choose t_b{(1+\f(\cw^{(b)}))},1 \le b \le a}\prod_{b=1}^a 
\frac{1}{t_b{(1+\f(\cw^{(b)}))}+1} {t_b{(1+\f(\cw^{(b)}))} +1\choose t_b}\cdot \Delta_T(\cw^{(b)})^{t_b}\, .
\end{align*}
In order to simplify this bound, note that
\[
  {q(\vec{t}) \choose t_b{(1+\f(\cw^{(b)}))},1 \le b \le a}
  \le a^{q(\vec{t})} \le a^{2\mk(\vec{t})}~,
\]
and that for each $b \in [a]$,
\[
  \frac{1}{t_b{(1+\f(\cw^{(b)}))}+1} {t_b{(1+\f(\cw^{(b)}))} +1\choose t_b}
  \le 2^{t_{b}(1+\f(\cw^{(b)}))+1}~,
\]
leading to
\begin{align*}
  N_{C,\vec{t}}(v)
  & \le a^{2\mk(\vec{t})} 2^{a+2\mk(\vec{t})}  \prod_{b=1}^a \Delta_T(\cw^{(b)})^{t_b}\\
  & = \Delta_T^{\mk(\vec{t})} a^{2\mk(\vec{t})} 2^{a+2\mk(\vec{t})}  \prod_{b=1}^a \left(\frac{\Delta_T(\cw^{(b)})^{\frac{2}{(\len(\cw^{(b)})+\f(\cw^{(b)}))}}}{\Delta_T}     \right)^{t_b (\len(\cw^{(b)})+\f(\cw^{(b)}))/2}~.
\end{align*}
The product is bounded by
\[\left(\frac{\eps}{(2a)^2}\right)^{\sum_{b=1}^at_b(\len(\cw^{(b)})+\f(\cw^{(b)}))} = \left(\frac{\eps}{(2a)^2}\right)^{\mk(\vec{t})}.\]Plugging this back into the previous display finishes the proof.
\end{proof}

\subsection{Bounding the number of walks}

In this section we establish the main combinatorial bound for $N_{\mk}(v,T)$, which we recall is the total number of closed walks of length $2\mk$ from $v$ to $v$ in $T$. 
To this end, first we convert $T$ into an $(M,H)$-tree by
defining $H$ as the set of vertices of degree at least $0.95\Delta_T$ where $\Delta_T$ is the maximum degree in $T$. Moreover, we define $M$ as the set of vertices of degree less than $0.95\Delta_T$ but at least $\kappa$ for a threshold $\kappa>0$ specified later. We say that a vertex is of high degree if $v\in H$, it is of moderate degree if $v\in M$
and it is of low degree otherwise. Recall that $\Delta_T^{(2)}$ is the greatest size of any second neighbourhood in $T$. 

\begin{prop}\label{prop:countpathsIJ}
Fix any real number $\kappa > 0$. Let $T=(V,E,M,H)$ be an $(M,H)$-tree with $H$ the set of vertices of degree at least $0.95\Delta_T$ and $M$ the set of vertices of $V\setminus H$ of degree at least~$\kappa$.
Suppose that there are no adjacent vertices in $T$ of degree at least $0.95\Delta_T$.
Fix an $(M,H)$-code $C=\{\cw^{(b)},1 \le b \le a\}$ and let
\[
    \eps =\eps(C,T)= (2a)^2 \max_{b \in [a]} \frac{\Delta_T(\cw^{(b)})^{\frac{2}{(\len(\cw^{(b)})+\f(\cw^{(b)}))}}}{\Delta_T}
\]
Then for all $v \in V$, 
  \[
    N_{\mk}(v,T) \le
       2^a ({\mk}+1)^2\Delta_T^{{\mk}} \left(1+ \frac{\Delta_T^{(2)}}{\Delta_T^2}\right)^{2{\mk}}
\left(1+ e^{20}a^2\eps\right)^{\mk}~.
  \]
\end{prop}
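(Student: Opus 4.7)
The plan is to view every closed walk of length $2\mk$ as an $(M,H)$-walk of some length $2j\le 2\mk$ together with a ``decoration'' that inserts simple and double backtracking steps at positions where the reduced walk occupies an $M\cup H$-vertex. Concretely, I would define the reduction $\rw\mapsto\rw'$ as the result of exhaustively removing simple and double backtrackings from $\rw$ (say leftmost-first); the hypothesis that no two high-degree vertices are adjacent enters to control the effective insertion counts and to keep the relevant $\Delta_T(\cw^{(b)})$ parameters in the $(M,H)$-code effective. This gives the decomposition
\[
  N_\mk(v,T)\;\le\;\sum_{j=0}^{\mk}N^{M,H}_j(v,T)\cdot G(j,\mk),
\]
where $G(j,\mk)$ upper-bounds the number of decorations extending an $(M,H)$-walk of length $2j$ into a walk of length $2\mk$.

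For $N^{M,H}_j(v,T)$, I would apply Corollary~\ref{cor:ij_code_bound} to obtain $N^{M,H}_j(v,T)\le 2^a(a^2\eps\Delta_T)^j$. For $G(j,\mk)$, at each of the $r\le 2j+1$ positions of $\rw'$ lying on an $M\cup H$-vertex one may insert an ordered sequence of simple and double backtrackings; each simple contributes a factor of at most $\Delta_T$ (the choice of low-degree neighbour) and each double a factor of at most $\Delta_T^{(2)}$ (the choice of a length-two descending path through two low-degree vertices), with the total inserted length constrained to be $2(\mk-j)$. The count of such decoration schemes is the generating-function coefficient
\[
  G(j,\mk)\;\le\;[z^{\mk-j}]\left(\frac{1}{1-\Delta_T z-\Delta_T^{(2)}z^{2}}\right)^{\!r},
\]
which I aim to bound by $\Delta_T^{\mk-j}(1+\beta)^{2\mk}\cdot\mathrm{poly}(\mk)$, where $\beta:=\Delta_T^{(2)}/\Delta_T^{2}$, via a careful multinomial estimate (equivalently, a saddle-point evaluation at $z_{0}\approx 1/(\Delta_T(1+\beta))$).

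Combining the two bounds and summing over $j\in\{0,\ldots,\mk\}$, using $\sum_j(a^2\eps)^j\le(\mk+1)(1+a^2\eps)^{\mk}$, yields a product of $2^a$, $(\mk+1)^{2}$, $\Delta_T^{\mk}$, $(1+\beta)^{2\mk}$, and $(1+a^2\eps)^{\mk}$; the factor $(1+e^{20}a^2\eps)^{\mk}$ in the claimed bound is then a loose weakening that absorbs the universal multiplicative overhead picked up along the way. The main technical obstacle is the decoration count: obtaining the full exponent $2\mk$ in $(1+\beta)^{2\mk}$, rather than a weaker $(1+\beta)^{\mk}$ paired with a Catalan-type $4^{\mk}$ overhead coming from the composition counts of distributing insertions among up to $r\le 2j+1$ positions. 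I expect this to be handled by a multinomial rearrangement exploiting that $2\mk=2j+2(\mk-j)$ leaves enough room to absorb both the direct zigzag factor and the combinatorial distribution overhead.
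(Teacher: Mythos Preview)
Your high-level decomposition (reduce each closed walk to its $(M,H)$-nonbacktracking skeleton $\rw^-$ of length $2j$, bound the number of skeletons by Corollary~\ref{cor:ij_code_bound}, and then count decorations) is exactly what the paper does. The gap is in the decoration bound $G(j,\mk)$.

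Your claim $G(j,\mk)\le \Delta_T^{\mk-j}(1+\beta)^{2\mk}\cdot\mathrm{poly}(\mk)$ is not true. With $r=2j+1$ insertion points each carrying the generating function $1/(1-\Delta_T z-\Delta_T^{(2)}z^2)$, the coefficient $[z^{\mk-j}]$ picks up an extra combinatorial factor of size roughly $\binom{\mk+j}{2j}$ coming from distributing the $\mk-j$ simple insertions among the $2j+1$ slots; this factor is independent of $\beta$ and cannot be absorbed into $(1+\beta)^{2\mk}$. If you carry it through, the sum over $j$ becomes $\sum_j \binom{\mk+j}{2j}(a^2\eps)^j$, which behaves like $(1+c\,a\sqrt{\eps})^{2\mk}$ rather than $(1+e^{20}a^2\eps)^{\mk}$, and this is too weak for the application (you need $\eps\ll\Delta_T^{-1/2}$, not $\sqrt{\eps}\ll\Delta_T^{-1/2}$).

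What is missing is the role of the $m/h$ distinction and the real use of the no-adjacent-high-degree hypothesis. The paper does not bound every simple insertion by $\Delta_T$: at an $m$-vertex the bound is $0.95\Delta_T$, and the hypothesis that no two $h$-vertices are adjacent forces at most $j+1$ of the $2j+1$ positions along $\rw^-$ to be of type $h$. One then splits the insertion count as $\binom{j+s_h}{s_h}\cdot\binom{j-1+s_m}{s_m}(0.95)^{s_m}\cdot(\text{double-backtrack piece})$. The $(0.95)^{s_m}$ damping makes the $s_m$-maximum finite, namely $\max_{s_m}\binom{j-1+s_m}{s_m}(0.95)^{s_m}\le e^{20j}$; the $s_h$-binomial is bounded by $\binom{\mk}{j}$; and the double-backtrack piece is bounded by $(1+\beta)^{2\mk}$. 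This yields $Z(j,\mk)\le(\mk+1)^2\binom{\mk}{j}e^{20j}(1+\beta)^{2\mk}\Delta_T^{\mk-j}$, after which the binomial theorem gives exactly $(1+e^{20}a^2\eps)^{\mk}$. In particular the constant $e^{20}$ is not a slack factor absorbing ``universal overhead''; it is the precise output of the $(0.95)^{s_m}$ optimisation, and your description of the no-adjacent-high-degree hypothesis as controlling the $\Delta_T(\cw^{(b)})$ parameters is off --- its job here is solely to cap the number of $h$-slots at $j+1$.
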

The proposition has a direct consequence for the largest eigenvalue of $T$. Indeed, since
\[\lambda_1(T) = \lim_{\mk \to \infty}\left(\max_{v\in V}N_{\mk}(v,T)\right)^{\frac{1}{2{\mk}}},\]
 we have the following result. 
\begin{cor}\label{cor:countpathsIJ}In the setting of Proposition \ref{prop:countpathsIJ}, the largest eigenvalue of $T$ satisfies
\[\lambda_1(T)\leq \sqrt{\Delta_T}\,\left(1+ \frac{\Delta_T^{(2)}}{\Delta_T^2}\right)
\,\sqrt{1+ e^{20} a^2\eps}.\]
\end{cor}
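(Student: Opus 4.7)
The plan is to derive the corollary as a direct asymptotic consequence of Proposition~\ref{prop:countpathsIJ}, using the trace-method identity \eqref{eq:lambdaonebound} that has already been established in the introduction. Specifically, I will take $(2\mk)$-th roots in the bound of Proposition~\ref{prop:countpathsIJ} and then send $\mk \to \infty$.

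In more detail, Proposition~\ref{prop:countpathsIJ} asserts that, for every $v \in V$,
\[
N_{\mk}(v,T) \le 2^a (\mk+1)^2 \Delta_T^{\mk} \left(1 + \frac{\Delta_T^{(2)}}{\Delta_T^2}\right)^{2\mk} \left(1 + e^{20} a^2 \eps\right)^{\mk}.
\]
Taking the maximum over $v \in V$ and raising to the $1/(2\mk)$-th power yields
\[
\bigl(\max_{v \in V} N_{\mk}(v,T)\bigr)^{1/(2\mk)} \le 2^{a/(2\mk)} (\mk+1)^{1/\mk} \sqrt{\Delta_T} \left(1 + \frac{\Delta_T^{(2)}}{\Delta_T^2}\right) \sqrt{1 + e^{20} a^2 \eps}.
\]
Since $a$, $\eps$, $\Delta_T$, and $\Delta_T^{(2)}$ do not depend on $\mk$, the prefactors $2^{a/(2\mk)}$ and $(\mk+1)^{1/\mk}$ both converge to $1$ as $\mk \to \infty$. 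Applying the identity \eqref{eq:lambdaonebound}, which says that $\lambda_1(T) = \lim_{\mk \to \infty} (\max_{v \in V} N_{\mk}(v,T))^{1/(2\mk)}$, the left-hand side converges to $\lambda_1(T)$, and the desired bound follows.

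There is essentially no obstacle here: the corollary is simply the ``clean'' asymptotic form of the polynomial-in-$\mk$ bound from the proposition, which is why the constant factor $2^a$ and the polynomial factor $(\mk+1)^2$ wash out in the limit. All of the real combinatorial work lives in Proposition~\ref{prop:countpathsIJ} itself (and, upstream, in Proposition~\ref{prop:ij_code_bound}).
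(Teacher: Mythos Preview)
Your proof is correct and follows exactly the approach the paper takes: the paper simply notes that the corollary is a direct consequence of Proposition~\ref{prop:countpathsIJ} via the identity $\lambda_1(T) = \lim_{\mk \to \infty}\bigl(\max_{v\in V}N_{\mk}(v,T)\bigr)^{1/(2\mk)}$, and you have written out the details of taking $(2\mk)$-th roots and letting the subexponential factors vanish in the limit.
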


To put this into perspective, recall that our goal is to prove an eigenvalue bound of the form $\lambda_1(T)\leq \sqrt{\Delta_T} + o(1)$ for most trees on $n$ vertices. The corollary implies that such a bound follows if both $\Delta^{(2)}_T/\Delta_T$ and $\eps=\eps(C,T)$ are much smaller than $1/\sqrt{\Delta_T}$. While $\Delta^{(2)}_T/\Delta_T$ depends only on the tree, $\eps$ depends also on the choices of code $C$ and threshold $\kappa$ (since $\kappa$ determines $M$ and $H$). It will turn out that taking $\kappa=\log^{1/5}n$ is good enough for our purposes, if the right code is used.

Let us now prove the proposition.

\begin{proof}[Proof of Proposition \ref{prop:countpathsIJ}]

In order to bound $N_{\mk}(v,T)$, we partition the set of walks of length $2{\mk}$ from $v$ to $v$ into equivalence classes. An equivalence class is determined by
\begin{enumerate}
    \item the number of single backtracking steps of type $xyx$ where $x$ has high degree;
    \item the number of single backtracking steps of type $xyx$ where $x$ is of moderate degree;
    \item the number of double backtracking steps of type $xyzyx$ where $x$ is of high degree;
    \item the number of double backtracking steps of type $xyzyx$ where $x$ is of moderate degree;
   
\end{enumerate}

To formalize this, given a walk $\rw=(w_0,\ldots,w_{2\mk})$ in $T$, the {\em reduced $(M,H)$- walk} is obtained from $\rw$ by removing all backtracking steps from $\rw$. In other words, if $(w_i,\ldots,w_{i+j})$ is a simple or double backtracking step of $\rw$, then replace $\rw$ by the subwalk $(w_0,\ldots,w_i,w_{i+j+1},\ldots,w_{2{\mk}})$, and repeat until no simple or double backtracking steps remain. From the definition of backtracking steps, it is straightforward to verify that the resulting walk $\rw^-$ does not depend on the order in which the backtracking steps are removed, and that $\rw ^-$ is an $(M,H)$-walk as defined in the previous subsection.

Writing $\langle\rw^-\rangle$ for the equivalence class of $\rw^-$, which consists of all walks $\rw$ of length $2\mk$ from $v$ to $v$ whose reduced $(M,H)$-walk is $
\rw^-$, we then have
\begin{equation}\label{eq:abstractbd1}
    N_\mk(v,T) =
    \sum_{\rw^-} |\langle \rw^-\rangle|,
\end{equation}
where sum on the right is over all $(M,H)$-walks $\rw^-$ from $v$ to $v$ of (even) length $0 \le 2j \le 2\mk$.

To bound this expression, group $(M,H)$-walks by their lengths. We know from Corollary~\ref{cor:ij_code_bound} that $N_{j}^{M,H}(v,T)$, the number of $(M,H)$-walks of length $2j$ from $v$ to itself, is at most $2^a\,a^{2j}\,\eps^j\,\Delta_T^j$, so 
\begin{equation}\label{eq:abstractbd}
    N_\mk(v,T) \leq 
    \sum_{j=0}^{\mk} (2^a\,a^{2j}\eps^j\,\Delta_T^j)\,Z(j,\mk),
\end{equation}
where 
\[Z(j,\mk):=\max\{|\langle \rw^-\rangle|\,:\,\rw^-\mbox{ is an $(M,H)$-walk of length $2j$}\}.\]

We now make the following claim.

\begin{claim}\label{claim:Z}For each $0\leq j\leq \mk$, \[Z(j,\mk)\leq (\mk+1)^2 \,\,\left(1+\frac{\Delta_T^{(2)}}{\Delta_T^2}\right)^{2\mk}\,\binom{\mk}{j}\,e^{20j}\,\Delta_T^{\mk-j}.\]\end{claim}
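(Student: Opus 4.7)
My plan is to enumerate walks in $\langle\rw^-\rangle$ according to the numbers of simple and double backtracking insertions. Writing $\rw^-=(u_0,u_1,\ldots,u_{2j})$, each $\rw\in\langle\rw^-\rangle$ is obtained from $\rw^-$ by inserting ordered sequences of backtracking excursions at positions in $I:=\{i\in\{0,\ldots,2j\}:u_i\in M\cup H\}$; each inserted excursion is either simple (length $2$, to a low-degree neighbour and back) or double (length $4$, via a low-degree grand-neighbour). Letting $s$ and $d$ denote the total numbers of simple and double insertions, one has $s+2d=\mk-j$, and the walks with those parameters factor as (arrangement count)$\times$(content count), with content at most $\Delta_T^s(\Delta_T^{(2)})^d=\Delta_T^{\mk-j}\alpha^d$, where I write $\alpha:=\Delta_T^{(2)}/\Delta_T^2$, since each simple event specifies one of at most $\Delta_T$ low-degree children and each double event one of at most $\Delta_T^{(2)}$ low-degree grand-children.

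The arrangement count is the number of interleavings of the $2j$ reduced steps (in fixed order) with the $s+d$ insertions distributed over the at most $2j+1$ possible positions in $I$, which can be bounded by the generating-function coefficient $[y^{\mk-j}](1-\Delta_T y-\Delta_T^{(2)}y^2)^{-(2j+1)}$; expanding gives the explicit upper bound $\binom{\mk+j-d}{2j}\binom{\mk-j-d}{d}$. Using the factorial identity $\binom{\mk+j-d}{2j}\binom{\mk-j-d}{d}=\binom{\mk+j-d}{d}\binom{\mk+j-2d}{2j}$, I would bound $\binom{\mk+j-2d}{2j}$ against $(\mk+1)^2 e^{20j}\binom{\mk}{j}$ by Stirling-type estimates, and then collect the remaining factors by summing
\[
\sum_{d\ge 0}\binom{\mk+j-d}{d}\alpha^d\le(1+\alpha)^{\mk+j}
\]
(via $\binom{\mk+j-d}{d}\le\binom{\mk+j}{d}$ and the binomial theorem). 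Combining with the trivial estimate $(1+\alpha)^{\mk+j}\le(1+\alpha)^{2\mk}$ would then yield the claimed bound.

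The principal obstacle is the Stirling-type control of $\binom{\mk+j-2d}{2j}/\binom{\mk}{j}$: in the worst case this ratio is of order $(\mk/j)^j$, which is not directly bounded by $(\mk+1)^2 e^{20j}$ when $\mk$ is much larger than $j$. The argument must therefore genuinely use the extra slack on the target side beyond $(1+\alpha)^{\mk+j}$, namely the factor $(1+\alpha)^{2\mk-(\mk+j)}=(1+\alpha)^{\mk-j}$. Because $\Delta_T^{(2)}\ge\Delta_T-1$ in any non-trivial tree (any two leaves of a maximum-degree vertex witness this), $\alpha=\Omega(1/\Delta_T)$, so $(1+\alpha)^{\mk}\ge\exp(\Omega(\mk/\Delta_T))$, which for $\mk$ large compared to $\Delta_T$ grows exponentially and suffices to absorb the binomial excess; a case split between $\mk$ comparable to $j$ (where the binomial ratio is already polynomial in $\mk$ and swallowed by $(\mk+1)^2 e^{20j}$) and $\mk\gg j$ (where $(1+\alpha)^{\mk-j}$ carries the argument) is the delicate step.
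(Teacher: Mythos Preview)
Your approach has a genuine gap: it never uses the hypothesis of Proposition~\ref{prop:countpathsIJ} that no two vertices of degree at least $0.95\Delta_T$ are adjacent, and without that hypothesis the intermediate bound you derive is strictly weaker than the target. Concretely, your content bound $\Delta_T^s(\Delta_T^{(2)})^d$ treats all simple backtracking insertions equally, so already the $d=0$ term of your sum gives the lower bound
\[
Z(j,\mk)\;\le\;\text{(your bound)}\;\ge\;\binom{\mk+j}{2j}\,\Delta_T^{\mk-j},
\]
and you would need $\binom{\mk+j}{2j}\le (\mk+1)^2(1+\alpha)^{2\mk}\binom{\mk}{j}e^{20j}$. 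Your proposed rescue via the slack $(1+\alpha)^{\mk-j}$ fails: take any tree with $\Delta_T^{(2)}=O(\Delta_T)$ (e.g.\ a single vertex of degree $\Delta_T$ with a long path attached), so that $\alpha\le C/\Delta_T$, and set $\mk=j\Delta_T$ with $j\ge 3$. Then $(1+\alpha)^{2\mk}\le e^{2C j}$, while $\binom{\mk+j}{2j}/\binom{\mk}{j}$ is of order $(e\mk/(4j))^j=(e\Delta_T/4)^j$. The required inequality becomes $(\Delta_T/c)^j\le (j\Delta_T+1)^2$ for a constant $c$, which is false for large~$\Delta_T$. In short, your lower bound $\alpha=\Omega(1/\Delta_T)$ is tight for such trees, and the slack $(1+\alpha)^{\mk-j}\approx e^{O(j)}$ cannot absorb a deficit of order $\Delta_T^{j}$.

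What the paper does instead is use the no-adjacent-$h$ hypothesis to guarantee that at most $j+1$ of the $2j+1$ positions along $\rw^-$ are of type~$h$. Splitting $s=s_h+s_m$ according to the type of the base vertex, the content at $m$-positions is only $0.95\Delta_T$, giving a factor $(0.95)^{s_m}$. After pulling out $\Delta_T^{\mk-j}$ and handling $d$ and $s_h$ crudely (at most $(\mk+1)^2$ choices for the pair, $\binom{j+s_h}{s_h}\le\binom{\mk}{j}$, and $\binom{2\mk}{d}\alpha^d\le(1+\alpha)^{2\mk}$), one is left with
\[
\max_{s_m\ge 0}\binom{j-1+s_m}{s_m}(0.95)^{s_m}\le e^{20j},
\]
a convergent maximum precisely because of the geometric damping $(0.95)^{s_m}$. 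This is where the constant $e^{20}$ originates, and it is not available in your setup.
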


The proposition follows from combining this claim with (\ref{eq:abstractbd}) and applying the binomial theorem.
The remainder of the proof thus consists of the proof of Claim \ref{claim:Z}. We fix $0\leq j\leq \mk$ from now on, and perform the following steps.

\begin{itemize}
\item[{\bf Step 1}] We find a function $F:\{\ell,m,h\}^{2j+1}\to \R_+$ with the following property. Consider an $(M,H)$-walk $\rw^-$ of length $2j$ and let $\tau = (\tau_i(C,\rw^-),0 \le i \le {2j})\in \{\ell,m,h\}^{2j+1}$ be the corresponding type sequence. Then:
\[|\langle \rw^-\rangle|\leq F({\tau}).\]
This reduces the problem of bounding $Z(j,m)$ to a maximization of $F$ over a restricted set that we will define below.

\item[{\bf Step 2}] We find an explicit $\tau_{\max}\in \{\ell,m,h\}^{2j+1}$ such that $F(\tau_{\max})\geq F(\tau)$ for any $\tau$ that is the type sequence of some $\rw^-$. We then compute $F(\tau_{\max})$ explicitly to find an upper bound for $Z(j,\mk)$. 
\item[{\bf Step 3}] We finish the proof of the claim by bounding $F(\tau_{\max})$. 
\end{itemize}

Let us now present these steps in detail.

\noindent{\bf Step 1: find a function $F$.} Consider an $(M,H)$-walk $\rw^-$ of length $2j$, and let $\tau = (\tau_i,0 \le i \le j)\in \{\ell,m,h\}^{2j+1}$. Any walk $\rw$ of length $2\mk$ whose reduced $(M,H)$-walk is $\rw^-$ can be constructed in the following way:
\begin{quotation}\noindent
For each $0\leq i\leq 2j$ with $\tau_i\in\{m,h\}$, choose $\sigma_i,\delta_i\in \Z_{\geq 0}$; for $\tau_i=\ell$, set $\sigma_i=\delta_i=0$. For each $0 \le i \le 2j$, we will add $\sigma_i$ single backtracking steps and $\delta_i$ doubly backtracking steps to location $\rw^-_i$ along the walk. To ensure that the total length of the resulting walk is $2\mk$, we require that $\sum_{i=0}^{2j}(\sigma_i+2\delta_i) = \mk-j$.\end{quotation} 

At each location $0\leq i\leq 2j$, having fixed $\sigma_i$ and $\delta_i$, the single and double backtracking steps at location $i$ can happen in a total of up to $\binom{\sigma_i+\delta_i}{\sigma_i}$ possible orders. The number of choices for each single backtracking step is the degree of vertex $\rw_i^-$, which is at most $\Delta_T$ if $\tau_i=h$ and at most $0.95\,\Delta_T$ if $\tau_i=m$. The number of choices for the double backtracking steps is at most the size of the second neighborhood of $\rw_i^{-}$, which we bound by $\Delta^{(2)}_T$. All in all, the number of possible choices for the single and double backtracking steps at position $i$, for a given choice of $\sigma_i$ and $\tau_i$, is at most
\[\binom{\sigma_i+\delta_i}{\sigma_i}\,(\Delta^{(2)}_T)^{\delta_i}\,\Delta_T^{\sigma_i}\,(0.95)^{\sigma_i\I{\type^-_i=m}}.\]
The upshot of the above discussion is this. Given $x = (x_i)_{i=0}^{2j}\in \{\ell,m,h\}^{2j+1}$, define
\[S(x):=\left\{((\sigma_i,\delta_i),0 \le i \le 2j)\in \Z_{\geq 0}^{2(2j+1)}\,:\,
\begin{array}{c}
\sum_{i=0}^{2j}(\sigma_i+2\delta_i) = (\mk-j)\\ \mbox{ and }\\ 
\forall 0\leq i\leq 2j,\, 
\mbox{if $x_i=\ell$ then $\sigma_i=\delta_i=0$}
\end{array}\right
\}
\]
and
\[F(x):=\sum\limits_{(\sigma_i,\delta_i)_{i=0}^{2j}\in S(x)}\,\prod_{i=0}^{2j} \binom{\sigma_i+\delta_i}{\sigma_i}\,(\Delta^{(2)}_T)^{\delta_i}\,\Delta_T^{\sigma_i}\,(0.95)^{\sigma_i\I{x_i=m}}.\]
Then 
\[|\langle \rw^-\rangle|\leq F(\tau).\]

By assumption, $T$ does not contain two adjacent vertices of type $h$. Therefore, $\tau$ cannot contain more than $j+1$ coordinates equal to $h$. This allows us to conclude that
\begin{equation}\label{eq:maxoverrestricted}Z(j,\mk)\leq \max\left\{F(\tau)\,:\,\tau\in\{\ell,m,h\}^{2j+1},\, \sum_{i=0}^{2j}\I{\tau_i=h}\leq j+1\right\}.\end{equation}

\noindent{\bf Step 2: find the maximizer and compute the maximum.}

Inspection reveals that $F$ has the following properties.

\begin{enumerate}
    \item for any $x\in \{\ell,m,h\}^{2j+1}$, $F(x)$ is invariant under permutations of the coordinates of $x$.
    \item Changing some coordinates of $x$ from $\ell$ to $m$ can only increase $F(x)$. Indeed, such a chance to $x$ can only increase the set $S(x)$ and the combinatorial factors in the sum defining $F$.
    \item Likewise, changing some coordinates of $x$ from $m$ to $h$ can only increase $F$.
    \end{enumerate}

Recalling (\ref{eq:maxoverrestricted}), we conclude that

\begin{equation}\label{eq:upperboundF}Z(j,\mk)\leq F(\tau_{\max})\end{equation}
where
\[\tau_{\max} = h^{j+1}m^j.\]
We note in passing that $\tau_{\max}$ is not the type sequence of any $(M,H)$-walk in $T$, as it contains consecutive occurences of $h$. The alternating sequence $hmhmh\dots mh$, which also maximizes $F$, could in principle correspond to a $(M,H)$-walk on some tree. However, working with $\tau_{\max}$ is a bit easier in what follows. 

To continue, we find a more convenient expression for $F(\tau_{\max})$. First of all notice that
\[S(\tau_{\max}):=\left\{(\sigma_i,\delta_i)_{i=0}^{2j}\in \Z_{\geq 0}^{2(2j+1)}\,:\sum_{i=0}^{2j}(\sigma_i+2\delta_i) = (\mk-j)\right\}\]
is the disjoint union of the sets  
\begin{equation}\label{eq:shmd_def}
    S(s_h,s_m,d):=\left\{(\sigma_i,\delta_i)_{i=0}^{2j}\in \Z_{\geq 0}^{2\,(2j+1)}\,:\begin{array}{l}\,\sum_{i=0}^j\sigma_i=s_h,\\ \sum_{i=j+1}^{2j}\sigma_i=s_m, \\ \sum_{i=0}^{2j}\delta_i=d\end{array}\right\}.
\end{equation}
where $(s_h,s_m,d)$ range over all triples in $\Z^3_{\geq 0}$ with $s_h + s_m + 2d =\mk-j.$ Intuitively, $s_h$ and $s_m$ correspond to the number of single backtracking steps at high and moderate vertices, respectively, that were removed to obtain the $(M,H)$-walk, whereas $d$ corresponds to double backtracking steps (the intuition is not quite right, because $\tau_{\max}$ does not correspond to a $(M,H)$-walk, as noted above). 
 
For each element $(\sigma_i,\delta_i)_{i=0}^{2j}\in S(s_h,s_m,d)$, we have
\begin{equation}\label{eq:weightinset}\prod_{i=0}^{2j}(\Delta^{(2)}_T)^{\delta_i}\,\Delta_T^{\sigma_i}\,(0.95)^{\sigma_i\I{t_i=m}} = \Delta_{T}^{s_h}\,(0.95\Delta_T)^{s_m}\,(\Delta_T^{(2)})^{d}.\end{equation} 

On the other hand, the sum 
\[\sum\limits_{(\sigma_i,\delta_i)_{i=0}^{2j}\in S(s_h,s_m,d)}\,\prod_{i=0}^{2j} \binom{\sigma_i+\delta_i}{\sigma_i}\]
counts the number of ways one can write a string of the form
\begin{equation}\label{eq:countstrings}h\,R_0 \,h\,R_2\,\dots\,h\,R_j\,m\,R_{j+1}\,m\,R_{j+2}\,\dots\,m\,R_{2j}\end{equation}
where 
\begin{itemize}
\item each $R_{i}$ is a permutation of $a^{\sigma_i}b^{\delta_i}$, for some choice of $\sigma_i,\delta_i\in\Z_{\geq 0}$ (so there are $\binom{\sigma_i+\delta_i}{\sigma_i}$ choices for $R_i$ once these two numbers is fixed);
\item the total number of characters equal to $a$ in the strings $R_0,\dots,R_j$ is $\sum_{i=0}^j\sigma_i=s_h$;
\item the total number of characters equal to $a$ in $R_{j+1},\dots,R_{2j}$ is $\sum_{i=j+1}^{2j}\sigma_i=s_m$;
\item the total number of characters equal to $b$ in $R_0,R_1,\dots,R_{2j}$ is $\sum_{i=0}^{2j}\delta_i=d$. 
\end{itemize}

We now give an alternative way to count these strings. Notice that such a string has total length $2j+1+s_h+s_m+d$, and always begins with `$h$'. There are thus 
\[\binom{2j+s_h+s_m+d}{d}\]
ways to choose the position of the $d$'s in a string of the above form. 
Once the $d$'s are removed, one is left with a string of the form
\[h\,a^{\sigma_0}\,h\,a^{\sigma_1}\,\dots\,ha^{\sigma_j}\,ma^{\sigma_{j+1}}\,\dots\,ma^{\sigma_{2j}}\]
with $\sum_{i=0}^{j}\sigma_i=s_h$ and $\sum_{i=j+1}^{2j}\sigma_i=s_m$. 
There are therefore
\[\binom{j+s_h}{s_h}\,\binom{j-1+s_m}{s_m}\]
total choices for $(\sigma_0,\ldots,\sigma_{2j})$. The upshot of this discussion is that 
\[\sum\limits_{(\sigma_i,\delta_i)_{i=0}^{2j}\in S(s_h,s_m,d)}\,\prod_{i=0}^{2j} \binom{\sigma_i+\delta_i}{\sigma_i} = \binom{2j+s_h+s_m+d}{d}\,\binom{j+s_h}{s_h}\,\binom{j-1+s_m}{s_m}.\]

Combining the above identity with (\ref{eq:weightinset}) and the definition of $S(s_h,s_m,d)$ in \eqref{eq:shmd_def}, we obtain
\begin{equation}\label{eq:expressionFtmax}F(\tau_{\max}) = \sum\limits_{\stackrel{(s_h,s_m,d)\in \Z^3_{\geq 0}}{s_h + s_m + 2d =\mk-j}}\, \binom{2j+s_h+s_m+d}{d}\,\binom{j+s_h}{s_h}\,\binom{j-1+s_m}{s_m}\,\Delta_{T}^{s_h}\,(0.95\Delta_T)^{s_m}\,(\Delta_T^{(2)})^{d}.\end{equation}

\noindent{\bf Step 3:  bounds and end of proof.} To finish the proof, we find upper bounds for $F(\tau_{\max})$, starting from (\ref{eq:expressionFtmax}), via a series of simplifications and overestimates.

First, notice that the formula can be bounded by noticing that ${\mk}-j-s_{h}-s_m=2d$ and $2j+s_h+s_m+d\leq {\mk}+j-d\leq 2{\mk}$ for all terms in the RHS of (\ref{eq:expressionFtmax}). Pulling out a factor of $\Delta_T^{\mk-j}$, we arrive at
\[
\frac{F(\tau_{\max})}{\Delta_T^{\mk-j}} 
\leq  
\sum
\limits_{\stackrel{(s_h,s_m,d)\in \Z^3_{\geq 0}}{s_h + s_m + 2d={\mk}-j}}\, 
\binom{2\mk}{d}\,
\binom{j+s_h}{s_h}\,
\binom{j-1+s_m}{s_m}\,(0.95)^{s_m}\,
\left(\frac{\Delta_T^{(2)}}{\Delta_T^2}\right)^{d}.
\]
We can further bound the expression, by upper bounding the terms involving $d$ and $s_m$, as 
\[\frac{F(\tau_{\max})}{\Delta_T^{\mk-j}} \leq  \sum\limits_{0\leq s_h\leq \mk-j}\, (\mk+1)\,\binom{j+s_h}{s_h}\,\left[\max_{s_m\in \Z_{\geq 0}}\binom{j-1+s_m}{s_m}\,(0.95)^{s_m}\right]\,\left[\max_{0\leq d\leq 2\mk}\binom{2\mk}{d}\,\left(\frac{\Delta_T^{(2)}}{\Delta_T^2}\right)^{d}\right],\]
where the factor $(\mk+1)$ is an upper bound on the maximum number of choices for $(s_m,d)$ given $s_h$.  

We now show that the maximum in $s_m$ is at most $e^{20j}$. Notice that \[\max_{s_m\in\Z_{\ge 0}}\binom{j-1+s}{s_m}\,(0.95)^{s_m}\]
is achieved at $s_m=s_* := \max\{19\,(j-1)-1,0\}$. For $j=0,1$, the maximum is achieved at $s_*=0$ and equals $1$, which is not larger than $e^{20j}$. For $j\geq 2$, we use the general inequality $\binom{r}{w}\leq (er/w)^w$ (valid for integers $0\leq w\leq r$) and $1+u\leq e^u$ (valid for all real $u$) to obtain:
\begin{align*}\binom{j-1+s_*}{s_*}\,(0.95)^{s_*} & \leq \left[\frac{19e}{20}\,\left(1+\frac{j-1}{19(j-1)-1}\right)\right]^{20\,(j-1)-1} \\ & \leq \exp\left(\frac{19}{20}\,(19\,(j-1)-1) + j-1)\right)\leq e^{20j}.\end{align*}
This gives our desired bound. 

As for the maximum in $d$, we simply note that, by the binomial theorem,
\[\binom{2\mk}{d}\,\left(\frac{\Delta_T^{(2)}}{\Delta_T^2}\right)^{d}\leq \left(1+\frac{\Delta_T^{(2)}}{\Delta_T^2}\right)^{2\mk}.\]

Applying these two bounds, we obtain
\[\frac{F(\tau_{\max})}{\Delta_T^{\mk-j}} \leq  \sum\limits_{0\leq s_h\leq \mk-j}\, (\mk+1) \,\binom{j+s_h}{s_h}\,e^{20j}\,\left(1+\frac{\Delta_T^{(2)}}{\Delta_T^2}\right)^{2\mk}.\]
We are left with a sum with $\mk-j+1\leq (\mk+1)$ terms, and the remaining binomial coefficient in our expression satisfies:
\[\binom{j+s_h}{s_h}=\binom{j+s_h}{j}\leq \binom{\mk}{j}\]
for any $0\leq s_h\leq \mk-j$. We conclude that
\[\frac{F(\tau_{\max})}{\Delta_T^{\mk-j}} \leq  (\mk+1)^2 \,\binom{\mk}{j}\,e^{20j}\,\left(1+\frac{\Delta_T^{(2)}}{\Delta_T^2}\right)^{2\mk}.\]

Combining this with (\ref{eq:upperboundF}), we obtain Claim \ref{claim:Z} and finish the proof.
\end{proof}

\section{\bf Structural properties}
\label{sec:structural}

In this section we establish certain structural properties of random labeled trees that hold with high probability. These properties allow us to make efficient use of the path counting techniques developed in Section \ref{sec:combinatorialnew}.

We begin with a well-known fact, which shall allow us to understand the typical properties of random labeled trees by instead studying Poisson Bienaym\'e trees.\footnote{Bienaym\'e trees are more commonly called {\em Galton--Watson trees}; here we use the alternative terminology proposed in \cite{MR4479914}. Concretely, a Poisson$(\lambda)$ Bienaym\'e tree is the family tree of a branching process with Poisson$(\lambda)$ offspring distribution.} To state the fact, a little more terminology is needed. 
A {\em plane tree}, also called an {\em ordered rooted tree}, is a rooted tree in which the set of children of each vertex are ordered. The vertices of a plane tree are canonically labeled by strings of positive integers as follows: the root is labeled with the empty string $\varnothing$. Inductively, for a vertex labeled $a=a_1a_2\ldots a_k$ and with $m$ children, its children receive labels $a1,a2,\ldots am$ in order.
(Here, for a string $a$ and a positive integer $i$, $ai$ represents the string obtained by concatenating $a$ and $i$.)
In this way, vertices at depth $d$ in the tree are labeled with strings from $\N^d$. The family trees of branching processes are (a particular sort of) random plane trees.

Given a rooted tree $T$ with vertices labelled by $[n]$, the {\em plane tree corresponding to $T$} is constructed as follows. First, for each node $v$ of $T$, order the children of $v$ from left to right in increasing order of vertex label; this endows $T$ with a plane structure. Then remove the original vertex labels.
\begin{lem}\label{lem:pgw0}
Fix $\lambda > 0$ and $n \in \mathbb{N}$. Let $\rt_n^{\bullet}$ be a uniformly random rooted tree with vertices labelled by $[n]$, 
and let $\rt_n^*$ be a Poisson$(\lambda)$ Bienaym\'e tree conditioned to have $n$ vertices. Then the random plane tree corresponding to $\rt_n^{\bullet}$ has the same distribution as $\rt_n^*$.
\end{lem}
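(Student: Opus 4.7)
The plan is to compute, for every plane tree $t$ with $n$ vertices, the probability that each of the two random objects equals $t$, and to verify that these probabilities coincide.

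For $\rt_n^*$, the unconditional probability that a Poisson$(\lambda)$ Bienaym\'e tree equals $t$ is $\prod_{v \in V(t)} e^{-\lambda}\lambda^{c(v)}/c(v)!$, where $c(v)$ denotes the number of children of $v$ in $t$. Using $\sum_v c(v) = n-1$, this factorises as $\lambda^{n-1} e^{-\lambda n}\prod_v 1/c(v)!$. Conditioning on the event $\{|\rt^*|=n\}$ divides by a constant depending only on $n$ and $\lambda$, so
\[\mathbf{P}\{\rt_n^* = t\} \;=\; Z^{-1}\prod_{v\in V(t)} \frac{1}{c(v)!}\]
for some normalising constant $Z=Z(n,\lambda)$ independent of $t$.

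For $\rt_n^\bullet$, Cayley's formula provides $n^{n-1}$ rooted trees on $[n]$ in total, and I want to count those whose canonical plane tree equals $t$. Such labelled trees correspond bijectively to bijections $\pi:V(t)\to [n]$ that are \emph{plane-compatible}, meaning that at every internal vertex $v$ the children of $v$ receive $\pi$-labels in the same order as their plane order. Equivalently, $\pi$ is a linear extension of the partial order $<_t$ on $V(t)$ defined by $u<_t v$ iff $u$ and $v$ share a parent and $u$ lies strictly to the left of $v$ in the plane order at their common parent. The Hasse diagram of $<_t$ decomposes as a disjoint union of chains: one chain of length $c(v)$ (the children of $v$) for each internal vertex $v$, together with the singleton $\{\mathrm{root}\}$. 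The number of linear extensions of a disjoint union of chains of sizes $a_1,\ldots,a_m$ is the multinomial $(a_1+\cdots+a_m)!/(a_1!\cdots a_m!)$, so here this count is $n!/\prod_v c(v)!$. Hence
\[\mathbf{P}\{\mathrm{plane}(\rt_n^\bullet)=t\} \;=\; \frac{n!}{n^{n-1}\prod_{v\in V(t)} c(v)!}.\]

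Both probability measures on the set of plane trees with $n$ vertices are thus proportional, as functions of $t$, to $\prod_v 1/c(v)!$; since both sum to $1$ they must coincide, which proves the lemma. The only step requiring a moment's care is the identification of plane-compatible bijections $\pi$ with linear extensions of $<_t$: once one checks that reordering children of each vertex by $\pi$-label returns $t$ exactly when $\pi$ respects the plane order among each sibling set, the remaining steps are routine. A pleasant byproduct of the argument is that the distribution of $\rt_n^*$ does not depend on $\lambda$, which is consistent (and expected, since the right-hand side does not involve $\lambda$ at all).
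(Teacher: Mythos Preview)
Your proof is correct. The paper does not actually give its own proof of this lemma; it simply cites Kolchin and Aldous for the result. Your argument---computing both probabilities explicitly and showing that each is proportional to $\prod_v 1/c(v)!$---is a clean self-contained derivation of this classical fact. The identification of plane-compatible labelings with linear extensions of the sibling partial order, and the resulting multinomial count $n!/\prod_v c(v)!$, is exactly the right combinatorial step; the only thing one might add for completeness is a line noting that distinct plane-compatible labelings give distinct rooted labeled trees (which follows since plane trees have no nontrivial automorphisms), so that the count really is the size of the fibre over $t$.
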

For a proof of the lemma, see \cite[Section 5]{MR0458613} or \cite[Section 7]{MR1630413}. 

In the remainder of the section, we bound the probability that $\rt_n$ has certain structural properties by first proving stronger bounds on the probability that a Poisson$(1)$ Bienaym\'e tree has such properties, then using Lemma~\ref{lem:pgw0} (among other tools) to transfer those bounds to $\rt_n^\bullet$ (and thence to $\rt_n$, when needed, by a union bound over the choices of possible root). 

A second useful tool is the {\em breadth-first search} (BFS) construction of plane trees. Given a sequence of non-negative integers $\mathrm{d}=(d_i,i \ge 0)$, the BFS construction works as follows. First, $d_0$ is the number of children of the root. Next, $d_1,\ldots,d_{d_0}$  are the numbers of children of the children of the root, ordered from left to right. The construction continues sequentially in this manner; so, for example, the first child of the root has children with $d_{d_0+1},\ldots,d_{d_0+d_1}$ children, the second has children which themselves have $d_{d_0+d_1+1},\ldots,d_{d_0+d_1+d_2}$ children, and so on. The construction may continue indefinitely or may halt; if it halts, it does so at step $\sigma=\sigma(\mathrm{d})=1+\inf\{m: d_0+\ldots+d_m \le m\}$, and in this case the resulting tree has $\sigma$ nodes. 

Given a rooted tree $T$ and $k \in \N$, write $T(k)$ for the set of nodes at distance $k$ from the root of $t$. Note that the BFS construction imparts a total order to the nodes of the resulting tree, which agrees with the order of the degrees in the sequence $\mathrm{d}$: if $k < l$ then nodes of $T(k)$ precede those of $T(l)$, and within $T(k)$ nodes are ordered from left to right. 

The following construction of Bienaym\'e trees is classical. 
\begin{lem}\label{lem:gw_bfs}
Let $\mathrm{X}=(X_i,i \ge 0)$ be IID non-negative integer-valued random variables with law $\mu$. Then the tree $\rt$  built by applying the BFS construction to $\mathrm{X}$ is a Bienaym\'e$(\mu)$ tree.
\end{lem}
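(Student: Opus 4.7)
The plan is to verify that the two constructions give the same law on random rooted plane trees by checking equality of probabilities for cylinder events. Since a random rooted plane tree is determined by its finite-depth restrictions, it suffices to show that for every integer $k \ge 0$ and every finite plane tree $t_k$ of depth at most $k$, the event that $\rt$ coincides with $t_k$ up to depth $k$ has the same probability under both constructions.

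The first step is to recall the structural property of BFS that underlies the construction. Given a finite plane tree $t$ on $\sigma$ vertices, list its vertices $v_0,v_1,\ldots,v_{\sigma-1}$ in BFS order (so that $v_0$ is the root, then the children of the root in left-to-right order, then their children, and so on), and write $c_i$ for the number of children of $v_i$ in $t$. A direct counting argument shows that the partial sums $S_m := \sum_{i=0}^{m}c_i$ satisfy $S_m > m$ for all $0 \le m < \sigma-1$ and $S_{\sigma-1} = \sigma - 1$: indeed, $S_m$ equals the total number of children of the first $m+1$ vertices in BFS order, and this must be at least $m+1$ as long as BFS has not exhausted all vertices at depth $\le \mathrm{depth}(v_m)+1$, while the total number of edges in a tree on $\sigma$ vertices equals $\sigma - 1$. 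This is precisely the condition that the BFS halting index $\sigma(\mathrm{d})$ from the construction equals $\sigma$ when the input sequence begins with $(c_0,\ldots,c_{\sigma-1})$.

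The second step is to compute, for a fixed finite plane tree $t_k$ of depth at most $k$, the probability of the event $\{\rt|_{\le k}=t_k\}$ under the BFS construction from $\mathrm{X}$. Let $v_0,\ldots,v_{N-1}$ enumerate in BFS order those vertices of $t_k$ at depth at most $k-1$, and let $c_i$ be the number of children of $v_i$ in $t_k$. By the structural fact above applied to each initial segment of the BFS construction, the event $\{\rt|_{\le k}=t_k\}$ is precisely the event $\{(X_0,\ldots,X_{N-1})=(c_0,\ldots,c_{N-1})\}$ (the BFS partial-sum inequalities being automatic from the existence of $t_k$). By independence of the $X_i$ and their common law $\mu$, this probability equals $\prod_{i=0}^{N-1}\mu(c_i)$. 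On the other hand, a Bienaymé$(\mu)$ tree $\rt'$ satisfies $\p{\rt'|_{\le k}=t_k}=\prod_{i=0}^{N-1}\mu(c_i)$ directly from the definition, because the numbers of children of distinct vertices are IID with law $\mu$. Thus the two laws agree on this generating family of cylinder events, hence on the full Borel $\sigma$-algebra of the space of rooted plane trees.

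The only subtlety in this argument is bookkeeping: the BFS halting condition $\sigma = 1+\inf\{m: S_m \le m\}$ must be matched precisely to the combinatorial fact that BFS on a finite tree visits each vertex exactly once, and the identification of $\{\rt|_{\le k}=t_k\}$ with an event depending on only finitely many coordinates of $\mathrm{X}$ requires care so that partial exploration of the tree at depth $k$ (where depth-$k$ vertices are leaves in $t_k$ by convention, but may have children in $\rt$) does not contribute to the event. Once these are in hand, the proof is essentially a tautology.
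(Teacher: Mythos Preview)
The paper does not actually prove this lemma; it merely introduces it with the sentence ``The following construction of Bienaym\'e trees is classical'' and then moves on to use the associated random walk $(S_k)$. So there is no proof in the paper to compare against.

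Your argument is correct and is the standard way to establish this fact: identify the cylinder events $\{\rt|_{\le k}=t_k\}$ with events of the form $\{(X_0,\ldots,X_{N-1})=(c_0,\ldots,c_{N-1})\}$, using that BFS explores vertices level by level so that the first $N$ indices correspond exactly to the vertices of $t_k$ at depth $\le k-1$; then independence of the $X_i$ gives the product $\prod_i \mu(c_i)$, which matches the Bienaym\'e probability by definition. The bookkeeping you flag at the end (that the halting condition is automatically consistent because $t_k$ is a valid tree, and that depth-$k$ vertices are not constrained) is handled correctly. One small stylistic point: the partial-sum discussion in your first step is accurate but slightly more than you need here, since the only structural fact the second step uses is that BFS visits all depth-$\le k-1$ vertices before any depth-$k$ vertex; the full halting characterisation is not really invoked.
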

Let $S_0=0$ and, for $k \ge 1$ let $S_k=
\sum_{0 \le i < k} (X_i-1)$. 
Then $\sigma = \inf\{k: S_k =-1\}$ is the size (number of nodes) of $\rt$, and $(1+S_k,0 \le k \le \sigma)$ is the {\em BFS queue length process}, tracking the number of nodes which have been revealed in the breadth-first search process but whose degree in $\rt$ is not yet known.

In the remainder of the section we let $\mathrm{X}=(X_i,i \ge 0)$ be a sequence of independent Poisson$(1)$ random variables and let $\rt$ be the Poisson$(1)$ Bienaym\'e tree built by applying the BFS construction to $\mathrm{X}$.
\begin{lem}\label{lem:gen_width}
For all $k,s \in \N$, 
\[
\p{\exists i \le k: |\rt(i)| \ge s} \le \p{\max_{i \le (k-1)s} S_i \ge s}\, .
\]
\end{lem}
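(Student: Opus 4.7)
The plan is to establish a pointwise (deterministic) inclusion of the underlying events and then pass to probabilities. Define $i^{\star}=i^{\star}(\rt):=\min\{i\ge 0:|\rt(i)|\ge s\}$, with the convention $i^{\star}=\infty$ if no such $i$ exists; then the event on the left is simply $\{i^{\star}\le k\}$. The cases $s\in\{0,1\}$ are trivial, so I will assume $s\ge 2$, in which case $|\rt(0)|=1<s$ forces $i^{\star}\ge 1$ on the event of interest.

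The key structural ingredient is the stated BFS identity: $1+S_{j}$ equals the queue length at the start of step $j$. Writing $M_{i-1}:=\sum_{j=0}^{i-1}|\rt(j)|$ for the number of BFS steps needed to process every vertex at depth strictly less than $i$, I would argue as follows. At step $M_{i^{\star}-1}$, all vertices of depth $<i^{\star}$ have been processed, so every child of such a vertex --- i.e., every vertex of $\rt(i^{\star})$ --- has already been revealed; conversely, no vertex of depth $\ge i^{\star}$ has yet been processed, and no vertex of depth $>i^{\star}$ has yet been revealed. Consequently the queue at that instant is exactly $\rt(i^{\star})$, and
\[
1+S_{M_{i^{\star}-1}}\;=\;|\rt(i^{\star})|\;\ge\;s,
\]
which, using that the queue length equals $1+S_{j}$, supplies the required lower bound on $\max_{j}S_{j}$.

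The remaining task is to bound the index $M_{i^{\star}-1}$ so that it lies in the range $[0,(k-1)s]$. By minimality of $i^{\star}$, each generation $0\le j<i^{\star}$ has $|\rt(j)|\le s-1$, and $|\rt(0)|=1$. Combined with $i^{\star}\le k$ and $s\ge 2$, this gives
\[
M_{i^{\star}-1}\;\le\;1+(i^{\star}-1)(s-1)\;\le\;(k-1)s.
\]
Putting this together with the previous paragraph yields the event inclusion
\[
\{\exists\,i\le k:\,|\rt(i)|\ge s\}\;\subseteq\;\bigl\{\max\nolimits_{j\le(k-1)s}S_{j}\ge s\bigr\},
\]
and passing to probabilities proves the lemma.

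The argument is essentially bookkeeping; the only subtle point is to align the convention that the queue length is $1+S_{j}$ (not $S_{j}$) with the threshold $s$ appearing on both sides of the inequality, and to handle the base-case generation $0$ separately so that the bound $M_{i^{\star}-1}\le(k-1)s$ is genuinely tight. There is no probabilistic input beyond recognising that the BFS-queue identity is a deterministic consequence of the construction of $\rt$ from the sequence $\mathrm{X}$.
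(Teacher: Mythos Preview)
Your argument is essentially identical to the paper's: define the first generation $I$ (your $i^{\star}$) of size at least $s$, use the BFS identity $|\rt(i)|=1+S_{\tau(i)}$ with $\tau(i)=\sum_{j<i}|\rt(j)|$ (your $M_{i-1}$), bound $\tau(I)\le 1+(I-1)(s-1)\le (I-1)s$, and pass to probabilities.

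One small slip worth flagging (which the paper's own proof also contains): from $1+S_{M_{i^{\star}-1}}=|\rt(i^{\star})|\ge s$ you obtain only $S_{M_{i^{\star}-1}}\ge s-1$, not $\ge s$, so the displayed event inclusion is off by one as written. Relatedly, the case $s=1$ is not ``trivial'' in your favour: the left side equals $1$ (since $|\rt(0)|=1$) while the right side need not, so the inequality actually fails there; and the step $1+(k-1)(s-1)\le (k-1)s$ requires $k\ge 2$. None of this matters for the applications in the paper, where $k$ and $s$ are large, but since you explicitly comment on aligning the $1+S_j$ convention with the threshold $s$, it is worth getting that alignment right.
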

\begin{proof}
Let $\tau(0)=0$ and, for $i \ge 0$, let $\tau(i+1)=\tau(i)+S_{\tau(i)}+1$. It is straightforward to see, by induction, that for all $i \ge 0$, $\tau(i) = \sum_{0 \le j < i} |\rt(j)|$ and $|\rt(i)|=S_{\tau(i)}+1$.

Now let $I = \inf\{j\ge 0: |\rt(j)| \ge s\}$. Since $|\rt(0)|=1$ and $|\rt(j)| < s$ for $0<j < I$, it follows that $\tau(I) \le (I-1)(s-1)+1 \le (I-1)s$ and $S_{\tau(I)} \ge s$. Thus 
if $I \le k$ then there is $j \le (k-1)s$ such that $S_j \ge s$. The result follows. 
\end{proof}
\begin{prop} \label{prop:stopping}
Fix positive functions $m=m(n) =o(n)$ and 
$r=r(n) =o(n^{1/2})$. 
If $\tau$ is any stopping time for $\mathrm{S}=(S_k,k \ge 0)$, then 
\[
\probC{\tau \le m}{|\rt|=n} = (1+o(1)) \E{S_\tau \I{\tau \le \min(m,|\rt|)} \I{S_\tau \le r}} + 
\probC{\tau \le m,S_\tau > r}{|\rt|=n}
\]
\end{prop}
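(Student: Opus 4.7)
The plan is to decompose $\p{\tau\le m,\,|\rt|=n}$ via the strong Markov property at $\tau$, apply the cycle (Dwass) lemma to evaluate the resulting hitting-time probabilities, and reduce the remaining ratio to $1+o(1)$ via a local central limit theorem for the centered-Poisson walk $\mathrm{S}$. Splitting on whether $S_\tau\le r$, the event $\{S_\tau>r\}$ contributes the second summand on the right-hand side verbatim, so the substance of the proof is to show that $\probC{\tau\le m,\,S_\tau\le r}{|\rt|=n}$ equals $(1+o(1))$ times the displayed expectation. Note that since $m=o(n)$, on $\{\tau\le m,\,|\rt|=n\}$ one has $\tau<\sigma=|\rt|$ for all $n$ large enough, so in particular $S_\tau\ge 0$ there.

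For each $k\le m$ and $0\le j\le r$, I would apply the strong Markov property at $\tau$ to obtain
\[
\p{\tau=k,\,S_\tau=j,\,|\rt|=n} = \p{\tau=k,\,S_\tau=j,\,\tau<\sigma}\cdot \p{\widetilde\sigma_{-(j+1)}=n-k},
\]
where $\widetilde\sigma_{-(j+1)}$ denotes the first hitting time of $-(j+1)$ by an independent copy of $\mathrm{S}$ started at $0$. The cycle lemma then supplies $\p{\widetilde\sigma_{-(j+1)}=n-k}=\frac{j+1}{n-k}\p{S_{n-k}=-(j+1)}$ and $\p{|\rt|=n}=\frac{1}{n}\p{S_n=-1}$; dividing yields
\[
\probC{\tau=k,\,S_\tau=j}{|\rt|=n}=\p{\tau=k,\,S_\tau=j,\,\tau<\sigma}\cdot(j+1)\cdot\frac{n}{n-k}\cdot\frac{\p{S_{n-k}=-(j+1)}}{\p{S_n=-1}}.
\]

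The next step is to argue that the last factor equals $1+o(1)$ uniformly in $(k,j)$ over the ranges of interest. Since $S_\ell+\ell$ has a Poisson$(\ell)$ distribution, a direct Stirling estimate gives $\p{S_\ell=v}=(2\pi\ell)^{-1/2}e^{-v^2/(2\ell)}(1+O(1/\ell))$ uniformly for $|v|=o(\ell^{2/3})$. Under $m=o(n)$ and $r=o(\sqrt n)$, both the prefactor $n/(n-k)$ and the ratio of Gaussian densities are $1+o(1)$ uniformly for $k\le m$ and $0\le j\le r$. Summing the displayed identity over this range rewrites the total as $(1+o(1))\,\e[(1+S_\tau)\I{\tau\le\min(m,|\rt|),\,S_\tau\le r}]$; the indicator $\tau<\sigma$ may be relaxed to $\tau\le\min(m,|\rt|)$ because on $\tau=|\rt|$ the walk has just hit $-1$, making $1+S_\tau=0$ there, so no boundary issues arise.

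The principal technical point will be the uniformity of the local CLT across the required range of $(k,j)$; for Poisson$(1)$ increments, however, this is elementary via the explicit Stirling estimate and needs nothing beyond the stated bounds $m/n\to 0$ and $r/\sqrt n\to 0$. Combining the $\{S_\tau\le r\}$ identity with the $\{S_\tau>r\}$ contribution finishes the proof.
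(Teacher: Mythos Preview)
Your approach is essentially identical to the paper's: split on $\{S_\tau\le r\}$ versus $\{S_\tau>r\}$, apply the strong Markov property and Kemperman's formula (the cycle lemma) to the first piece, and use the local central limit theorem to show that the ratio of point probabilities is $1+o(1)$ uniformly over $k\le m$ and $j\le r$.

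One discrepancy is worth flagging. Your application of the cycle lemma gives the factor $\frac{j+1}{n-k}\,\p{S_{n-k}=-(j+1)}$, which is the correct form: conditional on $S_\tau=j$ with $\tau<\sigma$, the walk must drop by $j+1$ to first hit $-1$. This leads you to $(1+o(1))\,\e[(1+S_\tau)\I{\cdots}]$ rather than $(1+o(1))\,\e[S_\tau\I{\cdots}]$ as in the stated proposition. The paper's own proof writes $\frac{s}{n}\,\p{S_{n-i}=-s}$ at the corresponding step, which appears to carry the same off-by-one relative to the standard Kemperman identity. The two versions differ by $\p{\tau\le\min(m,|\rt|),\,S_\tau\le r}$, which is immaterial for the paper's only application of the proposition (in Lemma~\ref{lem:adjacent} one just uses the crude bound $S_\tau\le r=O(\log^2 n)$). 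So your argument is sound---arguably more accurate than the paper's at this point---but it does not literally reproduce the displayed formula; you should say so explicitly.
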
 
\begin{proof} 
We first write 
\begin{equation}\label{eq:prop_stopping}
\p{\tau \le m, |\rt|=n} 
= \p{\tau \le m, S_\tau \le r,|\rt|=n} + \p{\tau \le m, S_\tau > r, |\rt|=n}\, .
\end{equation}
To bound the first probability on the right, note that since $|\rt|=\sigma$, 
\begin{align*}
\p{\tau \le m, S_\tau \le r,|\rt|=n} & = \p{\tau \le m, S_\tau \le r,\sigma=n}\\
	& = \sum_{i \le m} \sum_{s = 1}^r\p{\tau =i \le \sigma, S_\tau=s,\sigma=n} \\
	& = \sum_{i \le m} \sum_{s =1}^r\p{\tau =i\le \sigma, S_i=s}\probC{\sigma=n}{\tau =i\le \sigma, S_i=s} 
\end{align*}
By Kemperman's formula \cite[6.3]{MR2245368}, 
\[
\probC{\sigma=n}{\tau =i\le \sigma, S_i=s} = \frac{s}{n} \p{S_{n-i}=-s}\,
\]
so
\begin{align*}
\p{\tau \le m, S_\tau \le r,|\rt|=n}
& =
\sum_{i \le m} \sum_{s = 1}^r\p{\tau=i\le \sigma, S_i=s} \cdot \frac{s}{n} \p{S_{n-i}=-s} \, .
\end{align*}
Uniformly over $i$ and $s$ with $i \le m=o(n)$ and $s \le r=o(n^{1/2})$, by the local central limit theorem $\p{S_{n-i}=-s} = (1+o(1)) \p{S_n=-1}$. Combined with Kemperman's formula this gives that 
\[
\frac{s}{n}\p{S_{n-i}=-s} = (1+o(1)) \frac{s}{n}\p{S_n=-1} =  (1+o(1)) s\p{|\rt|=n}\, .
\]
It follows that 
\begin{align*}
\p{\tau \le m, S_\tau \le r,|\rt|=n}
& = 
(1+o(1))\sum_{i \le m} \sum_{s = 1}^r \p{\tau =i\le \sigma,S_i=s} \cdot s \p{|\rt|=n} \\
& = 
(1+o(1)) \E{S_\tau \I{\tau \le \min(m,\sigma)} \I{S_\tau \le r}} \cdot \p{|\rt|=n}\, .
\end{align*}
Using this identity, the proposition now follows from (\ref{eq:prop_stopping}) and Bayes formula. 
\end{proof}

The next lemma controls neighbourhood sizes near a fixed vertex of $\rt$; the following lemma uses the first to deduce additional information about the local structure of $\rt$. 
\begin{lem}\label{lem:pbbounds}
Let $\rt$ be a Poisson$(1)$ Bienaym\'e tree. 
Then for $n$ sufficiently large, the following bounds both hold.
First, 
\[
\p{\exists j \le 4: |\rt(j)|\ge 37\log n} \le n^{-4}.
\]
Second,
\begin{equation}\label{eq:big_nbhd_bd0}
\p{\exists j \le \log n: |\rt(j)| \ge \lfloor \log^2 n\rfloor}
\le
2 \exp(-(\log^2 n)/5)\, .
\end{equation}
\end{lem}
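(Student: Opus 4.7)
My plan is to prove both bounds via Lemma~\ref{lem:gen_width}, which reduces each neighbourhood-size event to a tail bound for the maximum of the BFS walk $S_i=\sum_{0\le j<i}(X_j-1)$, where the $X_j$ are i.i.d.\ Poisson$(1)$. The main tool is Doob's maximal inequality applied to the submartingale $e^{\lambda S_i}$, combined with the explicit moment-generating function $\E{e^{\lambda S_N}}=\exp\!\bigl(N(e^{\lambda}-1-\lambda)\bigr)$ of the centred Poisson walk; this gives the generic bound $\p{\max_{i\le N}S_i\ge s}\le \exp\!\bigl(-\lambda s+N(e^{\lambda}-1-\lambda)\bigr)$ for every $\lambda>0$.

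For the first bound, I take $k=4$ and $s=\lceil 37\log n\rceil$ in Lemma~\ref{lem:gen_width}, reducing the target to $\p{\max_{i\le 3s}S_i\ge s}$. The Doob estimate reads $\exp\!\bigl(-\lambda s+3s(e^{\lambda}-1-\lambda)\bigr)$, and the exponent is minimised by taking $e^{\lambda}=4/3$, producing $s\bigl(1-4\log(4/3)\bigr)$. Since $4\log(4/3)>1.15$, the coefficient in front of $s$ is at most $-0.15$, so substituting $s\ge 37\log n$ gives a bound of order $n^{-5.5+o(1)}$, well below $n^{-4}$ for $n$ large.

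For the second bound, I set $k=\lfloor\log n\rfloor$ and $s=\lfloor\log^{2}n\rfloor$, so Lemma~\ref{lem:gen_width} reduces the target to $\p{\max_{i\le N}S_i\ge s-1}$ with $N=(k-1)s\le \log^{3}n$. I would then attempt to bound this via the same Doob step, calibrating $\lambda$ so that the exponent matches the claimed rate. Complementarily, one can exploit the generational recursion $|\rt(j+1)|\mid |\rt(j)|\sim\mathrm{Poisson}(|\rt(j)|)$ together with the Poisson Chernoff tail $\p{\mathrm{Poisson}(w)\ge s}\le \exp\!\bigl(-s\log(s/w)+s-w\bigr)$ applied at the first hitting time $J=\inf\{j:|\rt(j)|\ge s\}$: on $\{J\le\log n\}$, the value $|\rt(J-1)|$ is strictly below $s$, and chaining $\{|\rt(j)|\ge s\}\subset \{|\rt(j-1)|\ge s/c\}\cup \{\mathrm{Poisson}(w)\ge s\text{ for some }w<s/c\}$ for a carefully chosen $c>1$ lets one convert generation-wise Poisson tails into an exponential bound for the union.

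The delicate point is calibrating the second argument. A naive Gaussian-regime choice $\lambda=s/N$ in Doob gives only $\exp(-s^{2}/(2N))=\exp(-\log n/2)$, which is much weaker than the target $\exp(-\log^{2}n/5)$; similarly, a single chaining step leaves an uncontrolled $Q(s/c)$ remainder, and iterating it naively degrades the rate at each level. Closing the gap therefore requires working in a genuinely non-Gaussian regime for $\lambda$ (so that $N(e^{\lambda}-1-\lambda)$ is compared to $\lambda s$ in the Poisson large-deviation regime rather than the quadratic one), and/or running the chaining only a bounded number of times with a ratio $c$ chosen so that the per-level Chernoff exponent $\log c-1+1/c$ exceeds $1/5$; the bookkeeping needed to absorb the $\log n$ union-bound factor into the leading constant is where I expect the main technical subtlety.
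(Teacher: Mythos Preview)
For the first bound your approach is correct and matches the paper's: both reduce via Lemma~\ref{lem:gen_width} to $\p{\max_{i\le 3s}S_i\ge s}$ with $s\approx 37\log n$, then control the walk maximum by an exponential inequality (L\'evy reflection plus the Poisson Chernoff tail in the paper, Doob's maximal inequality with the optimiser $e^\lambda=4/3$ in your version). Either way the bound lands comfortably below $n^{-4}$.

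For the second bound there is a genuine gap, and the paper's own proof does not close it either. The paper follows exactly your route --- Lemma~\ref{lem:gen_width}, reflection, and the bound $\exp\bigl(-(s-1)^2/(4(k-1)s)\bigr)$ --- and then asserts that ``taking $k=\lfloor\log n\rfloor$ and $s=\lfloor\log^2 n\rfloor$ yields the second bound''; but this substitution gives $\exp(-\Theta(\log n))$, precisely the weak rate you identified, not $\exp(-(\log^2 n)/5)$. Neither of your proposed fixes can repair this. A ``non-Gaussian $\lambda$'' is not available: with $N=(k-1)s\approx\log^3 n$ and $s\approx\log^2 n$ one has $s/N\to 0$, so the optimal $\lambda\to 0$ and the Cram\'er rate for $\{\max_{i\le N}S_i\ge s\}$ is $(1+o(1))s^2/(2N)=\Theta(\log n)$, which is the sharp order for that walk event --- no calibration of $\lambda$ can improve it. The generational chaining fails for the same structural reason: reaching $\log^2 n$ from $|\rt(0)|=1$ in at most $\log n$ generations requires only a per-generation growth factor $c=(\log^2 n)^{1/\log n}=1+O(\log\log n/\log n)$, so the per-level Chernoff exponent $c\log c-c+1=O((c-1)^2)$ is $o(1)$, and the union over levels is at best polynomially small in $n$. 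In short, the claimed rate $\exp(-(\log^2 n)/5)$ is not obtainable by the paper's argument or by your refinements; this appears to be a slip in the paper rather than a deficiency of your reasoning.
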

\begin{proof}
By Lemma~\ref{lem:gen_width}, we have 
\[ 
\p{\exists j \le k: |\rt(j)| \ge s} 
\le 
\p{\max_{i \le (k-1)s} S_i \ge s} 
\] 
where $(S_k,k \ge 0)$ is a random walk with steps $(X_i-1,i \ge 1)$. 
Since a Poisson$(\lambda)$ random variable has a median in $[\lambda-\ln 2,\lambda+1/3)$ (see \cite{MR1195477}) and $X_1+\cdots+X_i$ is Poisson$(i)$-distributed, 
by L\'evy's reflection principle \cite[Theorem 1.4.13]{MR2760872}, for $s \ge 1$ we have 
\[
\p{\max_{i \le (k-1)s} S_i \ge s}
 \le 2\p{S_{(k-1)s} \ge s-1}.
\]
We now use a standard Poisson tail bound \cite[Lemma 1.2]{MR1986198}: for $\eps > 0$, 
\begin{equation}\label{eq:ptb}
\p{\mathrm{Poi}(\lambda) >(1+\eps)\lambda)}
\le \exp(-\lambda((1+\eps)\log(1+\eps)-\eps))\, .
\end{equation}
If $\eps \le 1/2$ then this upper bound is at most $\exp(-\eps^2\lambda/4)$. 
If $k \ge 3$, so that $s-1 \le (k-1)s/2$, then using that $S_i+i$ is Poisson$(i)$-distributed, this implies that 
\[
\p{S_{(k-1)s} \ge s-1}
\le 
2\p{\mathrm{Poisson}((k-1)s) \ge 3(k-1)s/2} \le 
\exp\left(-\frac{1}{4}\frac{(s-1)^2}{(k-1)s}\right)\, .
\]
Taking $k=4$ and $s=37\log n$ now yields the first bound, and taking $k=\lfloor \log n\rfloor$ and $s=\lfloor \log^2 n\rfloor$ yields the second bound.
\end{proof}
\begin{lem}\label{lem:tprops}
Let $\rt$ be a Poisson$(1)$ Bienaym\'e tree. Then for $n$ sufficiently large, the following bounds both hold.
First, writing $\deg(w)=\deg_{\bf T}(w)$ for vertices $w$ of ${\bf T}$, and setting 
\[
K^*:=\min(k \in \N: \mathrm{deg}(w) < \log^{1/5}n \mbox{ for all } w \in \rt(2k) \cup \rt(2k+1)),
\]
we have
\[
\p{K^* \ge 15(\log^{4/5}n)/\log\log n} < 2n^{-4}\, .
\]
Second, writing
\[
Z = \{v \in \rt: |v|< 2K^*, \mathrm{deg}(v) \ge \log^{1/5} n\},
\]
we have
\[
\p{\sum_{v \in Z} \mathrm{deg}(v) \ge 3 \log n} < 4n^{-4}\, ,
\]
and 
\[
\p{\sum_{v \in Z} |\{w \in [n]: \mathrm{dist}(v,w) \le 3\}| \ge 150(\log n)(\log\log n)^2} < 7n^{-4}.
\]
\end{lem}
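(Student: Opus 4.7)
The plan is to establish each of the three bounds using the BFS construction of Lemma~\ref{lem:gw_bfs}, the width estimates of Lemma~\ref{lem:pbbounds}, and the Poisson tail bound $-\log\p{\mathrm{Poi}(1)\ge k}=(1+o(1))k\log k$. Set $L:=\lceil 15(\log^{4/5}n)/\log\log n\rceil$, $\lowv:=\log^{1/5}n$, and $p':=\p{\mathrm{Poi}(1)\ge\lowv-1}$; the Poisson tail gives $-\log p'=(1/5+o(1))\lowv\log\log n$. Let $\mathcal{W}$ denote the event $\{|\rt(j)|\le\log^2n\text{ for all }j\le\log n\}$, so $\p{\mathcal{W}^c}\le n^{-4}$ by Lemma~\ref{lem:pbbounds}.

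For the first bound, note that $K^*\ge L$ forces each of the $L$ level-pairs $\rt(2k)\cup\rt(2k+1)$, $k<L$, to contain a vertex of degree $\ge\lowv$. Under $\mathcal{W}$, the first $2L$ levels jointly contain at most $2L\log^2n$ vertices, and by the BFS construction their children counts are i.i.d.\ $\mathrm{Poi}(1)$, so each vertex independently has degree $\ge\lowv$ with probability at most $p'$. A Chernoff/binomial tail estimate then yields
\[
\p{K^*\ge L\mid\mathcal{W}}\le(2e\log^2n\cdot p')^L\le n^{-4}
\]
for $n$ large, after using the Poisson tail to conclude $L|\log(2e\log^2n\cdot p')|\ge 4\log n$ (enlarging the constant $15$ in the definition of $L$ if necessary to absorb lower-order terms).

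For the second bound, work on $\mathcal{E}_1:=\{K^*<L\}\cap\mathcal{W}$, of probability $\ge 1-2n^{-4}$. On $\mathcal{E}_1$, $Z\subseteq Z':=\{v:|v|<2L,\deg v\ge\lowv\}$, so $\sum_{v\in Z}\deg v\le Y:=\sum_{v\in Z'}\deg v$. Since $\mathrm{Poi}(1)$ conditioned on being $\ge\lowv-1$ concentrates at $\lowv-1$ with exponentially small excess, one has $Y\le 2\lowv|Z'|$ except on an exponentially rare event (control of the excess via a simple Poisson tail at each level above $\lowv$). A Chernoff bound on $|Z'|\le\mathrm{Bin}(2L\log^2n,p')$ at threshold $3\log n/(2\lowv)$ gives $\p{|Z'|\ge 3\log n/(2\lowv)\mid\mathcal{W}}\le n^{-4}$; combined with the excess-tail control this yields $\p{Y\ge 3\log n\mid\mathcal{W}}\le n^{-4}$, and hence the $<4n^{-4}$ estimate.

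For the third bound, use the tree identity $|B_3(v)|\le 1+\deg v\cdot(1+\Delta^\star(v)+\Delta^\star(v)^2)$, where $\Delta^\star(v)$ is the maximum degree in $B_2(v)\setminus\{v\}$. Combined with $\sum_{v\in Z}\deg v\le 3\log n$ from Part~2, we obtain
\[
\sum_{v\in Z}|B_3(v)|\le|Z|+3\log n\cdot\bigl(1+\Delta^\star_{\max}+(\Delta^\star_{\max})^2\bigr),
\]
where $\Delta^\star_{\max}:=\max_{v\in Z}\Delta^\star(v)$. The main obstacle is sharpening the bound on $\Delta^\star_{\max}$, since a direct Poisson-tail bound over all of $\mathcal{W}$ only gives $O(\log n/\log\log n)$, which is too weak. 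The plan to close the gap is to restrict further to the event that the BFS subtrees rooted at each vertex of $Z$ satisfy the first estimate of Lemma~\ref{lem:pbbounds}, so that the $2$-balls around $Z$-vertices are jointly contained in a local region of size $O(|Z|\log n)$ polylogarithmic in $n$. Within a Poisson BP of polylogarithmic size $M$, the maximum degree is $O(\log M/\log\log M)=O(\log\log n)$ with probability $\ge 1-n^{-4}$, giving $\Delta^\star_{\max}\le C\log\log n$ on this good event. Plugging back, $\sum_{v\in Z}|B_3(v)|\le O(\log n(\log\log n)^2)\le 150(\log n)(\log\log n)^2$, with cumulative failure probability $\le 7n^{-4}$ after accounting for all good events invoked.
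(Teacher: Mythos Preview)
Your Part~1 is fine and close to the paper's argument (the paper uses the Markov property at the stopping times $\tau_{2k}-1$ to obtain a product bound rather than a binomial tail, but the arithmetic is the same).

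Part~2 has a real gap. The step ``$Y\le 2\lowv|Z'|$ except on an exponentially rare event'' does not deliver probability $\le n^{-4}$: the event that some vertex among the first $2L\log^2 n$ BFS-explored vertices has more than $2\lowv=2\log^{1/5}n$ children has probability of order $\exp\bigl(-\Theta(\log^{1/5}n\,\log\log n)\bigr)$, which is much larger than $n^{-4}$. (To force $\p{\mathrm{Poi}(1)>C}\le n^{-4}$ one needs $C$ of order $\log n/\log\log n$, not $\log^{1/5}n$.) The paper instead applies a Chernoff bound directly to the truncated sum $S^+:=\sum_{i}(X_i+1)\I{X_i\ge\lowv-1}$ with exponential parameter $t=2$: since $\E{e^{2(X_1+1)\I{X_1\ge j}}}\le 1+2e^{2j}/j!$ is only barely larger than~$1$, one obtains $\p{S^+\ge 2\log n}\le 2e^{-4\log n}$. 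Summing the truncated variables directly absorbs the (rare but not $n^{-4}$-rare) contribution of vertices with degree far above $\lowv$, which your split into $|Z'|$ times a per-vertex cap cannot.

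Part~3 has a more fundamental gap. Your claim that ``within a Poisson BP of polylogarithmic size $M$, the maximum degree is $O(\log M/\log\log M)=O(\log\log n)$ with probability $\ge 1-n^{-4}$'' is false: a \emph{single} $\mathrm{Poi}(1)$ already exceeds any threshold $C=O(\log\log n)$ with probability $1/C!\gg n^{-4}$, so no matter how small $M$ is, achieving failure probability $\le n^{-4}$ forces the cap up to $\Theta(\log n/\log\log n)$. Plugging that into your inequality yields only $\sum_{v\in Z}|B_3(v)|=O\bigl(\log^3 n/(\log\log n)^2\bigr)$, far too large. The paper takes a different route and never bounds a maximum degree. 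It proves a \emph{uniform subset-sum} estimate: for $C\ge 7\log\log n$ and $K$ with $KC\log C\ge 8\log n$,
\[
\p{\exists\, S\subset[\lfloor\log^3 n\rfloor]:\ |S|\le K,\ \textstyle\sum_{i\in S}X_i\ge CK}\le n^{-4},
\]
via a union bound over the $\binom{\log^3 n}{K}$ subsets and the Poisson tail $\p{\mathrm{Poi}(K)\ge CK}$. Applied first with $S$ the BFS-indices of the neighbours of $Z$ (so $K\le 3\log n$ by Part~2) and $C=7\log\log n$, this bounds the second neighbourhood of $Z$ by $O((\log n)\log\log n)$; a second application then bounds the third neighbourhood by $O((\log n)(\log\log n)^2)$. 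The key point is that the \emph{average} degree over any size-$K$ random subset is $O(\log\log n)$ with the required probability, even though the maximum degree can be much larger; your approach via $\Delta^\star_{\max}$ cannot exploit this.
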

\begin{proof}
Let $\tau_0=0$ and for $i \ge 0$ let $\tau_{i+1}=\tau_i+S_{\tau_i}+1$. Note that $\tau_i-1$ is a stopping time for the filtration generated by $\mathrm{X}$. Writing $\f(\rt)$ for the height of $\rt$, then for $0 \le k < \f(\rt)$, the $k$'th generation $\rt(k)$ of $\rt$ is constructed by BFS between times $\tau_k$ and $\tau_{k+1}$, and $\f(\rt)=\inf(k> 0: \tau_{k}=\tau_{k-1})-1$ is the height of $\rt$. In particular, for $0 \le k < \f(\rt)$, the degrees of the vertices of $\rt(k)$ are $X_{\tau_k},\ldots,X_{\tau_{k+1}-1}$. 

For $k \ge 0$ let 
\[
E_k = \{\max(X_{i},\tau_{2k}\le i \le \min(\tau_{2k+2}-1,\tau_{2k}+2\lfloor\log^2 n\rfloor) < \log^{1/5}n-1\}.
\]
Since $\tau_{2k}-1$ is a stopping time, by the Markov property we have 
\begin{align*}
& \p{E_k^c~|~X_i,0 \le i \le \tau_{2k}-1}\\
& =\p{\max(X_j,1 \le j \le \min(\tau_2-1,1+2\lfloor \log^2 n\rfloor)
\ge \log^{1/5} n-1}\\
& \leq \p{\max(X_j,1 \le j \le 1+2\lfloor \log^2 n\rfloor\rfloor)
\ge \log^{1/5} n-1} \\ 
& \le \frac{4\log^2 n}{(\lceil\log^{1/5} n -1\rceil)!}\\
& \le 
4\exp
\left(
(11/5)\log\log n-(\log^{1/5}n - 1)\log(\log^{1/5}n-1)
\right)\end{align*}
where in the third inequality we have used the bound $m! > (m/e)^m$. 

Now let $K = \min(k \ge 0: E_k\mbox{ occurs})$. 
Setting $k^* = C(\log^{4/5}n)/\log \log n$ 
for $C$ sufficiently large ($C=15$ suffices), it follows from the above bound and the Markov property that that for all large $n$,
\begin{align*}
 \p{K \ge k^*}
& \le \left(4\exp
\left(
(11/5)\log\log n-(\log^{1/5}n - 1)\log(\log^{1/5}n-1)
\right)\right)^{k^*}\\
& < n^{-4}\, .
\end{align*}

Since the degree of a vertex in $\rt$ is at most its number of children plus one, 
on the event that $|\rt(j)| \le \log^2 n$ for all $j \le \lfloor \log n\rfloor$, 
if $K < k^*$ the maximum degree in $\rt(2K)\cup \rt(2K+1)$ is less than $\log^{1/5}n$, so also $K^* < k^*$.
It thus follows by the above tail bound on $K$ together with \eqref{eq:big_nbhd_bd0} that 
\[
\p{K^* \ge k^*} \le 2\exp(-(\log^2 n)/5) + n^{-4},
\]
which establishes the first bound.

Next, for $i \ge 1$ write $X_i^+ = (X_i+1)\I{X_i \ge \log^{1/5} n-1}$. 
On the events that 
$|\rt(j)|< \log^2 n$ for all $j \le \lfloor \log n\rfloor$ and that $K^* \le 15\log^{4/5} n/\log\log n$, which have combined probability at least $1-2n^{-4}$ for $n$ large, we have 
\[
\sum_{v \in Z} \mathrm{deg}(v)
\le
\sum_{i < \lfloor \log^2 n \rfloor \cdot 30 \log^{4/5} n/\log\log n}(X_i^++1) \le 
\sum_{i \le \log^{14/5}n} (X_i^++1).
\]
To control the latter sum, 
write $S^+:=\sum_{i \le \log^{14/5}n} X_i^+$. 
For $t > 0$, a Chernoff bound gives 
\begin{align*}
\p{S^+ \ge M} 
& \le e^{-tM}\E{e^{tS^+}}\\
& = e^{-tM}(\e{e^{tX_1^+}})^{\log^{14/5}n}\, .
\end{align*}
For any $j \ge 2$ it holds that 
$\E{\exp(t(X_1+1)\I{X_1 \ge j})} \le 1+2e^{tj}/j!$, so taking $t=2$ and $j=\lceil \log^{1/5}n-1\rceil$ 
the above bound gives 
\[
\p{S^+ \ge M} 
\le 
e^{-2M} \pran{1+\frac{2e^{2\lceil \log^{1/5}n-1\rceil}}{(\lceil \log^{1/5}n-1\rceil)!}}^{\log^{14/5} n}
\]
Using that $m!\ge (m/e)^m$ and that $1+x \le e^x$ for $x > 0$, 
it follows that 
\[
\pran{1+\frac{2e^{\lceil \log^{1/5}n-1\rceil}}{(\lceil \log^{1/5}n-1\rceil)!}}^{\log^{14/5} n} < 2
\]
for $n$ large, so for $n$ large we have 
\[
\p{S^+ \ge M} < 2e^{-2M}.
\]
Taking $M = 2\log n$, the bound on $\p{\sum_{v \in Z} \mathrm{deg}(v) \ge 3 \log n}$ follows. 

For the third bound, 
we claim that for $n$ large, for any $K \in \N$ and $C \ge 7\log\log n$ with $KC\log C \ge 8\log n$, 
\begin{equation}\label{eq:newbd}
\p{\exists S\subset [\lfloor\log^3 n\rfloor]: |S| \le K, \sum_{i \in S} X_i \ge CK} \le n^{-4}\, .
\end{equation}
Assuming this, we prove the third bound as follows. Let $S$ be the set of times at which neighbours of vertices in $Z$ (including parents of vertices of $Z$) are explored in the breadth-first search process. On the event $G_1$ that  
$|\rt(j)| < \log^2 n$ for all $j \le \lfloor \log n\rfloor$ and that $K^* < 15\log^{2/3} n/\log\log n$, we have $S \subset [\lfloor\log^3 n\rfloor]$. On the event $G_2$ that $\sum_{v \in Z} \mathrm{deg}(v) < 3\log n$, this set $S$ also has size less than $3\log n$. It thus follows from the preceding bounds of the lemma and from \eqref{eq:newbd} applied with $K=\lfloor 3\log n\rfloor$ and $C=7\log\log n$ that
\[
\p{G_1,G_2,\sum_{i \in S} X_i \ge 3\log n\cdot 7\log\log n} \le n^{-4}\, .
\]
Let $G_3$ be the event that $\sum_{i \in S} X_i <3\log n\cdot 7\log\log n$. On $G_1\cap G_2 \cap G_3$, the second neighbourhood of $Z$ has size at most $3\log n\cdot (7\log\log n+1)$. 
Moreover, writing $S_2$ for the set of times at which vertices in the second neighbourhood of $Z$ are explored, on $G_1$ we have $S_2\subset[\lfloor \log^3 n\rfloor]$. It follows from \eqref{eq:newbd} applied with $K=\lfloor 3\log n(7\log\log n+1)\rfloor$ and $C=7\log\log n$ that 
\[
\p{G_1,G_2,G_3,\sum_{i \in S_2} X_i \ge 3\log n\cdot (7\log\log n+1)7\log\log n} \le n^{-4}. 
\]
For $n$ large, if $\sum_{i \in S_2} X_i < 3\log n\cdot (7\log\log n+1)7\log\log n$ and $G_2$ and $G_3$ both occur then 
\[
\sum_{v \in Z} |\{w \in [n]: \mathrm{dist}(v,w) \le 3\}| < 150 (\log n)(\log\log n)^2,
\]
so the above bounds yield that 
\begin{align*}
\p{\sum_{v \in Z} |\{w \in [n]: \mathrm{dist}(v,w) \le 3\}| < 150 (\log n)(\log\log n)^2}
& < 2\exp(-(\log^2 n)/5)+6n^{-4} \\
& < 7 n^{-4},
\end{align*}
the last bound holding for $n$ large, as required.

It remains to prove \eqref{eq:newbd}; for this write $L=\lfloor \log^3 n \rfloor$. 
It  suffices to consider sets $S\subset [L]$ of size exactly $K$, since the $X_i$ are non-negative.
By a union bound, it then follows that 
\begin{eqnarray*}
\lefteqn{
\p{\exists S\subset [\lfloor\log^3 n\rfloor]: |S| \le K, \sum_{i \in S} X_i \ge CK} } \\
& \le & {L \choose K} \p{\mathrm{Poisson}(K) \ge CK} \\
& \le & 
\exp\left(K\log L-K(C\log C-(C-1))\right),
\end{eqnarray*}
where in the second inequality we have used that ${L \choose K} \le L^K$ and the Poisson upper tail bound~\eqref{eq:ptb}. 

Since $C > 7\log\log n$ and $\log L \le 3\log\log n$, for $n$ large we have $C\log C - (C-1)-\log L > C\log C/2$ and so the final bound is at most $e^{-(KC\log C)/2} < n^{-4}$. 
\end{proof}

The next corollary transfers the above probability bounds from $\rt$ to $\rt_n$. 
For a vertex $u$ of $\rt_n$ and a positive integer $j$, define $\Gamma_{j}(u,\rt_n)$ (or $\Gamma_j(u)$ for simplicity)  as the set of vertices $v$ with $\mathrm{dist}_{\rt_n}(u,v)=j$ where we recall from the introduction that $\mathrm{dist}_{\rt_n}(u,v)$ is the distance of $u$ and $v$ in the tree. Similarly, $\Gamma_{\leq j}(u,\rt_n)$ (or simply $\Gamma_{\leq j}(u)$) denotes the set of all vertices $v$ with $\dist_{\rt_n}(u,v)\leq j$. 
\begin{cor}\label{cor:typicalclusterdegrees}
For $n \ge 1$ let $\rt_n$ be a uniformly random tree with vertices labeled by $[n]$. 
Then there exists a constant $C>0$ such that the following bounds hold for all $n$.
\begin{equation}\label{eq:big_nbhd_bd}
\p{\exists u \in [n],\exists j \le \log n: |\Gamma_{j}(u)| \ge \lfloor \log^2 n\rfloor}
\le
Cn^{5/2} \exp(-(\log^2 n)/5)\, .
\end{equation}
Next, for $u \in [n]$, writing
\[
K^*(u):=\min(k \in \N: \mathrm{deg}(v) < \log^{1/5}n \mbox{ for all } v \in N_u(2k) \cup N_u(2k+1)),
\]
then 
\[
\p{\max(K^*(u),u \in [n]) \ge 15(\log^{4/5}n)/\log\log n} \le Cn^{-3/2}\, .
\]
Finally, for $u \in [n]$ writing 
\[
Z(u) = \{v \in \rt_n: \mathrm{dist}(u,v)< 2K^*(u), \mathrm{deg}(v) \ge \log^{1/5} n\},
\]
then
\[
\p{\max_{u \in [n]}\sum_{v \in Z(u)} \mathrm{deg}(v) \ge 3 \log n} \le Cn^{-3/2}\, ,
\]
and
\[
\p{\max_{u \in [n]}\sum_{v \in Z(u)} |\Gamma_{\le 3}(v)| \ge 150(\log n)(\log\log n)^2} < Cn^{-3/2}
\]
\end{cor}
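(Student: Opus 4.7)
The plan is to reduce each of the four statements to one of the Poisson Bienaym\'e tree bounds in Lemmas~\ref{lem:pbbounds} and~\ref{lem:tprops} via two standard moves: a union bound over the choice of the distinguished vertex $u$, and a crude transfer from the conditioned tree $\rt_n^*$ to the unconditioned tree $\rt$. By vertex-label symmetry of $\rt_n$, for any fixed $u\in[n]$ the distribution of $\rt_n$ rooted at $u$ coincides with that of the uniformly random rooted labeled tree $\rt_n^\bullet$. For any event $A$ depending only on the rooted graph structure, a union bound over $u$ then gives $\p{\exists u\in[n]: A(\rt_n,u)}\le n\,\p{A(\rt_n^\bullet)}$, and by Lemma~\ref{lem:pgw0} this equals $n\,\p{A(\rt_n^*)}$. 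Under this identification the root-based random variables $\Gamma_j(u), K^*(u), Z(u)$ appearing in the corollary agree with the generation-based variables $|\rt(j)|, K^*, Z$ analyzed in Lemmas~\ref{lem:pbbounds} and~\ref{lem:tprops}.

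The second step is to absorb the conditioning. Using the elementary inequality $\probC{B}{|\rt|=n}\le \p{B}/\p{|\rt|=n}$ together with Kemperman's formula $\p{|\rt|=n}=\p{S_n=-1}/n$ and the local central limit theorem for the random walk $(S_k)$ (the same ingredients already invoked in the proof of Proposition~\ref{prop:stopping}), one obtains $\p{|\rt|=n}\ge c n^{-3/2}$ for some universal $c>0$ and all sufficiently large $n$. Thus $\p{A(\rt_n^*)}\le C n^{3/2}\,\p{A(\rt)}$, and combining this with the union bound produces an overall factor $Cn^{5/2}$ applied to the unconditioned probability of $A$.

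Plugging in the bounds from Lemmas~\ref{lem:pbbounds} and~\ref{lem:tprops} now yields everything. The neighbourhood estimate \eqref{eq:big_nbhd_bd} drops out of $n\cdot Cn^{3/2}\cdot 2\exp(-\log^2 n/5)=O(n^{5/2}\exp(-\log^2 n/5))$. The bounds on $K^*(u)$, on $\sum_{v\in Z(u)}\deg(v)$, and on $\sum_{v\in Z(u)}|\Gamma_{\le 3}(v)|$ each follow from $n\cdot Cn^{3/2}\cdot O(n^{-4})=O(n^{-3/2})$, after absorbing universal constants into the single constant $C$ claimed in the statement. I do not anticipate any real obstacle: the argument is essentially bookkeeping. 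The only point requiring attention is the observation that each event in the corollary is invariant under the plane-tree encoding used in Lemma~\ref{lem:pgw0}, which is immediate since these events involve only vertex degrees and graph distances in the rooted tree.
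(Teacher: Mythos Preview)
Your proposal is correct and follows essentially the same route as the paper: both arguments use Lemma~\ref{lem:pgw0} to identify the rooted labeled tree with a conditioned Poisson Bienaym\'e tree, remove the conditioning via the crude bound $\probC{B}{|\rt|=n}\le \p{B}/\p{|\rt|=n}$ together with the estimate $\p{|\rt|=n}\asymp n^{-3/2}$ from Kemperman's formula and the local CLT, and then apply a union bound over the root to pick up the final factor of $n$. The paper's proof is essentially identical in structure and level of detail.
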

\begin{proof}
We continue to take $\rt$ as above. Recalling that $\sigma=\inf(k:S_k =-1)$, then 
\begin{equation}\label{eq:treesizeprob}
\p{|\rt|=n} = \p{\sigma=n} = \frac{1}{n} \p{X_1+\ldots+X_n=n-1} = \frac{1+o(1)}{\sqrt{2\pi}n^{3/2}}\, ,
\end{equation}
the first identity holding by another application of Kemperman's formula and the second by the local central limit theorem (or by Stirling's formula). Combining this fact with Lemma~\ref{lem:pgw0}, it follows that for any graph property $\mathcal{P}$, 
\begin{align*}
\p{\rt_n^{\bullet}\mbox{ has property }\cP}
& = \Cprob{\rt\mbox{ has property }\cP}{|\rt|=n}\\
& \le (1+o(1))\sqrt{2\pi} n^{3/2}\cdot 
 \p{\rt\mbox{ has property }\cP}\, .
\end{align*}
Recall that $\rt_n^{\bullet}$ is a uniformly random rooted tree with vertices labeled by $[n]$. We may generate $\rt_n^{\bullet}$ from $\rt_n$ by choosing a root $\rho$ uniformly at random, so it follows by a union bound and the above asymptotic inequality that
\begin{align*}
& \p{\exists u \in [n],\exists j \le \log n: |\Gamma_j(u)| \ge \lfloor \log^2 n\rfloor}\\
& \le
n
\p{\exists j \le \log n: |\rt_n^{\bullet}(j)| \ge \lfloor \log^2 n\rfloor} \\
&
\le (1+o(1))\sqrt{2\pi} n^{5/2}
\p{\exists j \le \log n: |\rt(j)| \ge \lfloor \log^2 n\rfloor}.
\end{align*}
The first bound of the corollary now follows from the second bound of Lemma~\ref{lem:pbbounds}. 
The other bounds of the corollary follow from  Lemma~\ref{lem:tprops} in an essentially identical fashion. 
\end{proof}
We also record the following tail bound on the largest degree in $\rt_n$, which will be used, when proving the main theorem, to convert probability tail bounds to expectation bounds.
\begin{fact}\label{fact:maxdeg}
For all $n$ sufficiently large, $\p{\Delta_{\rt_n} \ge 2\log n} \le n^{-2}$.
\end{fact}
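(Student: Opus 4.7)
The plan is to transfer the problem to an unconditioned Poisson$(1)$ Bienaym\'e tree via Lemma~\ref{lem:pgw0}, and then use a union bound with a Poisson tail estimate. The maximum degree is invariant under the choice of root, so $\Delta_{\rt_n}$ and $\Delta_{\rt_n^\bullet}$ have the same distribution. By Lemma~\ref{lem:pgw0}, $\rt_n^\bullet$ (viewed as a plane tree) is distributed as a Poisson$(1)$ Bienaym\'e tree $\rt$ conditioned on $|\rt|=n$, and the map to the plane tree preserves the multiset of vertex degrees. Thus
\[
\p{\Delta_{\rt_n}\ge 2\log n} = \Cprob{\Delta_{\rt}\ge 2\log n}{|\rt|=n}.
\]

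Next, exactly as in the proof of Corollary~\ref{cor:typicalclusterdegrees}, I would use \eqref{eq:treesizeprob}, which gives $\p{|\rt|=n}\ge c\, n^{-3/2}$, to replace the conditional probability with an unconditional one, obtaining
\[
\p{\Delta_{\rt_n}\ge 2\log n} \le C\,n^{3/2}\,\p{|\rt|=n,\,\Delta_{\rt}\ge 2\log n}
\]
for a universal constant $C$. Now I would invoke Lemma~\ref{lem:gw_bfs} to build $\rt$ from an IID sequence $(X_i)_{i\ge 0}$ of Poisson$(1)$ random variables, where $X_i$ is the number of children of the $i$-th vertex in BFS order; the degree of each non-root vertex is $X_i+1$, and the degree of the root is $X_0$. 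On $\{|\rt|=n\}$ only $X_0,\ldots,X_{n-1}$ are used to construct $\rt$, so a union bound yields
\[
\p{|\rt|=n,\,\Delta_{\rt}\ge 2\log n} \le n\,\p{X_0 \ge 2\log n - 1}.
\]

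The remaining step — and there is no real obstacle beyond routine estimation — is a Poisson tail bound. Using $\p{X_0\ge k}\le 2/k!$ together with $k!\ge (k/e)^k$, with $k=\lceil 2\log n-1\rceil$ one gets
\[
\p{X_0\ge 2\log n-1} \le 2\left(\frac{e}{2\log n-1}\right)^{2\log n-1},
\]
whose logarithm is $(2\log n)(1-\log\log n+O(1))$, so the right-hand side decays faster than any polynomial in $n$ and in particular is $o(n^{-9/2})$. Combining the three displayed inequalities gives $\p{\Delta_{\rt_n}\ge 2\log n}\le n^{-2}$ for all $n$ large enough, as desired.
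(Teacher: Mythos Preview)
Your proof is correct and follows essentially the same approach as the paper: transfer to the unconditioned Poisson$(1)$ Bienaym\'e tree via Lemma~\ref{lem:pgw0} and \eqref{eq:treesizeprob}, pick up a factor of $n^{5/2}$ from the conditioning and a union bound over vertices, and then kill everything with a superpolynomially decaying Poisson tail. The paper phrases the union bound as a union over the choice of root (reducing to $\p{|\rt(1)|\ge 2\log n}$), whereas you do it over the $n$ BFS degrees on the event $\{|\rt|=n\}$, but these are equivalent.
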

\begin{proof}
An argument just as in the above corollary shows that 
\begin{align*}
    \p{\Delta_{\rt_n} \ge 2\log n}
& \le (1+o(1))\sqrt{2\pi}n^{5/2}
\p{|\rt(1)|\ge 2\log n}\\
& = (1+o(1))\sqrt{2\pi}n^{5/2} \p{\mathrm{Poisson}(1) \ge 2\log n} \le n^{-2}\, ,
\end{align*}
the last bound holding for $n$ large.
\end{proof}
We also need to control the number of vertices of high degree contained in a small neighbourhood. For this, we use Proposition~\ref{prop:stopping}.

\begin{lem}\label{lem:adjacent} The probability that $\rt_n$ contains two adjacent nodes of degree $\geq 0.9\log n/\log \log n$ is at most $n^{-0.8+o(1)}$.
\end{lem}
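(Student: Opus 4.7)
The plan is to use a first-moment argument. Fix $t:=\lceil 0.9\log n/\log\log n\rceil$ and let $Y$ denote the number of edges $\{u,v\}\in E(\rt_n)$ with both $\deg_{\rt_n}(u)\ge t$ and $\deg_{\rt_n}(v)\ge t$. The event in the lemma is $\{Y\ge 1\}$, so Markov's inequality reduces the problem to showing that $\E{Y}\le n^{-0.8+o(1)}$.

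By linearity of expectation and the invariance of the distribution of $\rt_n$ under relabelling of vertices,
\[
\E{Y}=\binom{n}{2}\,\p{\{1,2\}\in E(\rt_n),\ \deg_{\rt_n}(1)\ge t,\ \deg_{\rt_n}(2)\ge t}.
\]
Since $\rt_n$ has $n-1$ edges, another symmetry argument gives $\p{\{1,2\}\in E(\rt_n)}=(n-1)/\binom{n}{2}=2/n$. Moreover, the classical count $\#\{\text{labelled trees on }[n]\text{ with }\{1,2\}\in E\text{ and }|T_1|=k\}=\binom{n-2}{k-1}k^{k-2}(n-k)^{n-k-2}$ (where $T_1$ denotes the component of $\rt_n\setminus\{1,2\}$ containing $1$) shows that conditional on $\{1,2\}\in E(\rt_n)$ and on the vertex set of $T_1$, the subtrees $T_1$ and $T_2:=\rt_n\setminus T_1$ are independent uniformly random labelled trees on their respective vertex sets.

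Since $\deg_{\rt_n}(i)=\deg_{T_i}(i)+1$ for $i\in\{1,2\}$, and in a uniform random labelled tree on $k$ vertices the degree of a fixed vertex is distributed as $1+\mathrm{Bin}(k-2,1/k)$ (by Pr\"ufer's bijection), a standard binomial tail bound gives, uniformly over $k\ge t$,
\[
\p{\deg_{T_i}(i)\ge t-1\mid |T_i|=k}\ \le\ \binom{k-2}{t-2}k^{-(t-2)}\ \le\ \frac{1}{(t-2)!}\ \le\ \Bigl(\frac{e}{t-2}\Bigr)^{t-2}\ =\ n^{-0.9+o(1)},
\]
the final equality using the choice of $t$ together with the estimate $(t-2)\log(t-2)=(1-o(1))\cdot 0.9\log n$; for $k<t$ the probability is trivially zero.

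Summing over the possible vertex sets of $T_1$ and using the conditional independence of $T_1$ and $T_2$ then yields
\[
\E{Y}\ \le\ \binom{n}{2}\cdot\frac{2}{n}\cdot\bigl(n^{-0.9+o(1)}\bigr)^{2}\ =\ n^{-0.8+o(1)},
\]
which is the required bound. The argument is largely routine; the main point to verify carefully is the classical fact about the joint conditional distribution of the subtrees $T_1,T_2$ obtained by deleting a fixed edge of a uniform labelled tree, which is a direct consequence of the enumeration above (and hence of Cayley's formula).
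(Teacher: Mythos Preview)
Your proof is correct and takes a genuinely different route from the paper's.

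The paper argues via the Poisson Bienaym\'e tree representation: it bounds the conditional probability, given $|\rt|=n$, that the root and one of its children both have large degree, using the stopping-time machinery of Proposition~\ref{prop:stopping} to handle the conditioning. This fits naturally into the paper's broader framework, where that proposition is developed for other purposes anyway.

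Your argument is more elementary and self-contained. You work directly with labelled trees: split at the edge $\{1,2\}$, use Cayley's formula to see that the two pieces are conditionally independent uniform labelled trees, and then use the Pr\"ufer bijection to read off the degree law of a fixed vertex as $1+\mathrm{Bin}(k-2,1/k)$. The key inequality $\p{\mathrm{Bin}(m,p)\ge s}\le\binom{m}{s}p^s$ (union bound over size-$s$ subsets of successes) and $\binom{k-2}{t-2}k^{-(t-2)}\le 1/(t-2)!$ give the uniform bound $n^{-0.9+o(1)}$, and independence of the two pieces squares it. This avoids the branching-process representation and the local-CLT/Kemperman calculations entirely; for this particular lemma it is shorter and requires less background. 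The trade-off is that it does not plug into the general stopping-time framework the paper uses elsewhere, but as a standalone proof of this lemma it is cleaner.
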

\begin{proof}Set $h:=0.9\log n/\log\log n$. In the proof we will use the following estimates,
\begin{equation}\label{eq:tailbounds}\forall i\geq 0\,:\,\p{X_i\geq h-1} \leq  n^{-0.9+o(1)} \mbox{ and }\p{X_i\geq \log n}=n^{-\omega(1)},\end{equation}
which both follow from the Poisson tail bound (\ref{eq:ptb}).

Now let $E$ denote the event where there are two adjacent nodes in $\rt_n$ with degrees $\geq h$. Recall that the random rooted $n$-vertex tree $\rt^{\bullet}_n$ can be obtained from $\rt_n$ by picking a root uniformly at random, so 
\[\frac{\p{E}}{n}\leq \p{\deg_{\rt^{\bullet}_n}({\rm root})\geq h\mbox{ and }\max_{v\in {\rt^{\bullet}_n}(1)} \deg_{\rt^{\bullet}_n}(v)\geq h}=\p{F\mid |\rt|=n},\]
where
\[F:=\left\{\deg_{\rt}({\rm root})\geq h\mbox{ and }\max_{v\in {\rt}(1)} \deg_{\rt}(v)\geq h\right\} = \left\{X_0\geq h\mbox{ and }\exists\, 1\leq i\leq X_0\,:\,X_i\geq h-1\right\}.\]
Therefore, our goal is to show that $\p{F\mid |\rt|=n} \leq  n^{-1.8+o(1)}$. To this end, consider the events
\[G_{0}:=\{\exists 0\leq i\leq \log n \,:\, X_i\geq \log n\}\]
and
\[G_{1}:=\{X_0\in [h,\log n]\mbox{ and }\exists 1\leq i\leq \log n \,:\, X_i\in [h-1,\log n]\}.\]
Then $F\subset G_0\cup G_1$. Estimate (\ref{eq:tailbounds}) and a union bound imply that $G_0$ has superpolynomially small probability in $n$ if we do not condition on $\p{|\rt|=n}$. Since $\p{|\rt|=n}\asymp n^{-3/2}$ by (\ref{eq:treesizeprob}), this also holds conditionally. It thus suffices to show that $\p{G_1\mid |\rt|=n}\leq n^{-1.8+o(1)}$, which we do via Proposition \ref{prop:stopping}.

Define a stopping time $\tau$ for the $(X_i)_{i\geq 0}$ (or equivalently, for $(S_i)_{i\geq 0})$) as follows. If $X_0\not \in [h,\log n]$, or if $X_i\not\in [h,\log n]$ for all $1\leq i\leq n$, we set $\tau=n$. Otherwise, we let $\tau$ be the smallest index $1\leq i\leq n$ with $X_i\in [h,\log n]$. Notice that $G_1 = \{\tau\leq \log n\}$. Applying Proposition \ref{prop:stopping} with $d= (\log n+1)^2$ and $m=\log n$, we obtain
\begin{eqnarray*}\p{G_1\mid |\rt|=n} &=& (1+o(1))\E{S_{\tau}\I{S_\tau\leq d,\tau\leq \min(\log n,|\rt|)}} + \p{\tau\leq \log n,S_\tau>d\mid |\rt|=n}\\ &= & O(\log^2n) \p{\tau\leq \log n} + \p{\tau\leq \log n,S_\tau>(\log n+1)^2\mid |\rt|=n}.\end{eqnarray*}
The first term on the right is 
\[O(\log^2n)\,\p{\tau\leq \log n}= O(\log^2n)\,\p{X_0\geq h\mbox{ and }\exists 1\leq i\leq \log n,\, X_i\geq h-1}\leq n^{-1.8+o(1)}\]
by (\ref{eq:tailbounds}) and a union bound in $i$.

For the second term, we note that if $\tau\leq \log n$ but $S_\tau> (\log n+1)^2$, there must be $0 \le i \le \log n$ such that $X_i\geq \log n$. By (\ref{eq:tailbounds}) and a union bound, the probability of this event without the conditioning on $|\rt|=n$ is $n^{-\omega(1)}$, and the conditioning does not change this because $\p{|\rt|=n}\asymp n^{-3/2}$ by (\ref{eq:treesizeprob}). 
 \end{proof}

We close the section by proving tail bounds on the number of high-degree vertices in $\rt_n^{\bullet}$. Our tool for doing so is the following 
distributional identity for the numbers of children of vertices in random rooted labeled trees, which is an immediate consequence of Lemmas~\ref{lem:pgw0} and~\ref{lem:gw_bfs}. 
 \begin{cor}\label{childpoisson}
Let $\rt_n^{\bullet}$ be a uniformly random rooted tree with vertices labeled by $[n]$, and for $1 \le i \le n$ let $c_i$ be the number of children of $i$ in $\rt_n^{\bullet}$. Then $(c_1,\ldots,c_n)$ has the same distribution as $(P_1,\ldots,P_n)$, where $P_1,\ldots,P_n$ are independent Poisson$(1)$ random variables conditioned to satisfy $\sum_{i=1}^n P_i=n-1$. 
\end{cor}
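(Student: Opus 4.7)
The plan is to combine the two stated lemmas with an exchangeability reduction and the cycle lemma.

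First, I would apply Lemma~\ref{lem:pgw0} to identify the plane tree of $\rt_n^{\bullet}$ in distribution with $\rt_n^*$, a Poisson$(1)$ Bienaym\'e tree conditioned to have $n$ vertices. By Lemma~\ref{lem:gw_bfs} (together with the definition of $\sigma$), the degree sequence of $\rt_n^*$ read in BFS order is $(X_0,\dots,X_{n-1})$, where $(X_i)_{i \ge 0}$ are IID Poisson$(1)$ conditioned on $\sigma(\mathrm{X})=n$. In particular, the unordered multiset of children counts of $\rt_n^{\bullet}$ equals in distribution the multiset $\{X_0,\dots,X_{n-1}\}$.

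Next, I would reduce the corollary to a statement about multisets via exchangeability. Because $\rt_n^{\bullet}$ is uniform on rooted labeled trees with vertex set $[n]$, its law is invariant under relabeling of $[n]$, so $(c_1,\dots,c_n)$ is exchangeable; the sequence $(P_1,\dots,P_n)$ is exchangeable by construction. Any exchangeable law on $\Z_{\ge 0}^n$ is determined by the law of the unordered multiset of its coordinates, so it suffices to show that $\{c_1,\dots,c_n\}$ and $\{P_1,\dots,P_n\}$ coincide in distribution as random multisets.

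The heart of the matter is then to compare the multiset of an IID Poisson$(1)$ sequence conditioned on $\{\sigma=n\}$ with that of the same sequence conditioned only on the weaker event $\{\sum_{i=0}^{n-1} X_i = n-1\}$. For this I would invoke the cycle lemma (Dvoretzky--Motzkin): any sequence in $\Z_{\ge 0}^n$ summing to $n-1$ has exactly one of its $n$ cyclic shifts that is a valid BFS sequence. Since $\gcd(n,n-1)=1$, no such sequence has nontrivial cyclic period, so its $n$ cyclic shifts are distinct; hence for every multiset $M$ of nonnegative integers summing to $n-1$, the fraction of orderings of $M$ that are valid BFS sequences is exactly $1/n$, independent of $M$. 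Bayes' rule then shows that the conditional multiset laws under $\{\sigma=n\}$ and under $\{\sum X_i=n-1\}$ coincide, and together with the preceding steps this finishes the proof. The only nontrivial obstacle is this cycle-lemma step, specifically the coprimality observation that makes the fraction $1/n$ uniform across multisets; the remainder of the argument is bookkeeping.
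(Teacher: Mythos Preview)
Your proof is correct and is essentially the approach the paper intends; the paper itself gives no proof beyond declaring the corollary ``an immediate consequence of Lemmas~\ref{lem:pgw0} and~\ref{lem:gw_bfs}.'' You have simply spelled out the two points the paper glosses over: the passage from the BFS degree sequence of the plane tree to the label-indexed vector $(c_1,\dots,c_n)$ via exchangeability, and the replacement of the conditioning $\{\sigma=n\}$ by $\{\sum X_i=n-1\}$ via the cycle lemma (the latter is equivalent to Kemperman's formula, which the paper invokes elsewhere).
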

\begin{cor}\label{cor:deglower}
    Let $\rt_n^{\bullet}$ be a uniformly random rooted tree with vertices labeled by $[n]$. Fix any $k \in \N$, and let 
    $D_{\ge k}(n)$ be the number of vertices of $\rt_n^\bullet$ with at least $k$ children. Then for $t > 0$
    \[
    \p{D_{\ge k}(n) \le \e D_{\ge k}(n)-t} \le \exp(-t^2/(2\e D_{\ge k}(n))),
    \]
    and uniformly over $1 \le k \le \lceil\log n\rceil$, 
    \[
    \e D_{\ge k}(n) \ge (1-o(1))\frac{n}{k!}. 
    \]
\end{cor}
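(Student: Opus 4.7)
The plan is to exploit Corollary~\ref{childpoisson} to convert both statements into questions about multinomial counts, then invoke negative association for the concentration and a direct Stirling-type estimate for the expectation lower bound.

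First, I would observe that the conditional distribution $(P_1, \ldots, P_n)$ given $\sum_i P_i = n-1$ of i.i.d. Poisson$(1)$ variables is exactly $\mathrm{Multi}(n-1; 1/n, \ldots, 1/n)$ (a standard Poissonization/depoissonization identity). Thus $(c_1, \ldots, c_n)$ has the distribution of $n-1$ balls thrown uniformly into $n$ labelled bins, and $D_{\ge k}(n) = \sum_{i=1}^n Y_i$ with $Y_i := \I{P_i \ge k}$. In particular, by exchangeability, $\e D_{\ge k}(n) = n\, \p{\mathrm{Bin}(n-1, 1/n) \ge k}$.

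For the expectation, I would bound $\p{\mathrm{Bin}(n-1, 1/n) \ge k} \ge \p{\mathrm{Bin}(n-1, 1/n) = k}$ and expand:
\[
\p{\mathrm{Bin}(n-1, 1/n) = k} \;=\; \frac{1}{k!}\prod_{j=1}^{k}\Bigl(1 - \tfrac{j}{n}\Bigr)\,\Bigl(1 - \tfrac{1}{n}\Bigr)^{n-1-k}.
\]
For $1 \le k \le \lceil \log n\rceil$, both $\prod_{j=1}^{k}(1-j/n) = 1 - o(1)$ and $(1-1/n)^{n-1-k} = (1-o(1))e^{-1}$ uniformly in $k$, which gives the claimed $n/k!$ bound up to the (constant) factor $e^{-1}$ that arises naturally from the Poisson conditioning.

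For the concentration, the key ingredient is that multinomial counts $(P_1, \ldots, P_n)$ are negatively associated (a classical result of Joag-Dev and Proschan); since each $Y_i$ is a coordinate-wise non-decreasing function of $P_i$ alone, the indicator vector $(Y_1, \ldots, Y_n)$ is also negatively associated. Standard Chernoff-type bounds for sums of NA Bernoulli variables (see e.g., Dubhashi--Ranjan) reproduce the classical lower-tail bound, yielding
\[
\p{D_{\ge k}(n) \le \e D_{\ge k}(n) - t} \;\le\; \exp\!\Bigl(-\tfrac{t^2}{2\,\e D_{\ge k}(n)}\Bigr).
\]

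The main subtlety is the concentration step: a naive Azuma bound applied to the $n-1$ i.i.d.\ ball-placements would give variance proxy $n-1$ instead of $\e D_{\ge k}(n)$, which is too weak once $k$ grows. Using negative association (rather than bounded-difference inequalities) lets us recover exactly the i.i.d.\ Chernoff bound, which is what Lemma~\ref{lem:adjacent}-style arguments will ultimately need.
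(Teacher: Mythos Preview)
Your approach is essentially identical to the paper's: both identify $(c_1,\ldots,c_n)$ as multinomial via Corollary~\ref{childpoisson}, invoke negative association of multinomial counts (citing Dubhashi--Ranjan) for the Chernoff-type lower tail, and bound $\e D_{\ge k}(n) \ge n\,\p{c_1=k}$ by a direct computation---you via the explicit binomial pmf, the paper via the local CLT applied to the ratio $\p{X_2+\cdots+X_n=n-1-k}/\p{X_1+\cdots+X_n=n-1}$. Your observation about the extra factor $e^{-1}$ is correct and worth flagging: the paper's displayed chain writes $\p{X_1=k}=1/k!$ where $e^{-1}/k!$ is meant, so its argument in fact also yields $(1-o(1))\,n/(e\,k!)$ rather than the stated $(1-o(1))\,n/k!$; this constant discrepancy is harmless for the only application, in the proof of Theorem~\ref{thm:main2}, where one needs $n/a! \gg e^{(\log n)^\beta}$ and the factor $e$ is immaterial.
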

\begin{proof}
The vector $(c_1,\ldots,c_n)$ of numbers of children of the vertices of $\rt_n^\bullet$ is multinomially distributed by Corollary~\ref{childpoisson}, so its entries are negatively associated (see \cite{dubhashi1998balls}). The lower tail bound then follows from \cite[Proposition 5]{dubhashi1998balls} and standard binomial tail estimates. For the expectation bound, let $(X_i,i \ge 1)$ be independent Poisson$(1)$ random variables; by Corollary~\ref{childpoisson}, 
\begin{align*}
\p{c_1=k} & =\p{X_1=k|X_1+\ldots+X_n=n-1}\\
& =\p{X_1=k}\frac{\p{X_2+\ldots+X_n=n-1-k}}{\p{X_1+\ldots+X_n=n-1}}\, \\
& =\frac{1}{k!}\frac{\p{X_2+\ldots+X_n=n-1-k}}{\p{X_1+\ldots+X_n=n-1}}\,.
\end{align*}
Since 
\[
\e D_{\ge k}(n) = n \p{c_1 \ge k} > n \p{c_1=k}\, ,
\]
the result follows by the local central limit theorem. 
\end{proof}
\begin{cor}\label{cor:deltaupper}
Let $\rt_n^{\bullet}$ be a uniformly random rooted tree with vertices labeled by $[n]$, and let $a^*=a^*(n)=\max(m\in \N: m! \le n)$. Then $\e\sqrt{\Delta_{\rt_n^\bullet}} \le a^*(n)+o(1)$ as $n \to \infty$.
\end{cor}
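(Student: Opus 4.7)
The plan is to combine a union bound over vertices, Corollary~\ref{childpoisson}, and a local central limit theorem to produce a Poisson-type upper tail for $\Delta_{\rt_n^\bullet}$, and then integrate this tail against $d\sqrt{k}$. First I would write the layer-cake expansion
\[
\e\sqrt{\Delta_{\rt_n^\bullet}} = \sum_{k\ge 1}(\sqrt{k}-\sqrt{k-1})\,\p{\Delta_{\rt_n^\bullet}\ge k} \le \sqrt{a^*(n)}+\sum_{k>a^*(n)}(\sqrt{k}-\sqrt{k-1})\,\p{\Delta_{\rt_n^\bullet}\ge k},
\]
using the trivial bound $\p{\Delta_{\rt_n^\bullet}\ge k}\le 1$ for $k\le a^*(n)$; this reduces matters to showing that the remaining sum is $o(1)$. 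Note this will actually establish the stronger bound $\e\sqrt{\Delta_{\rt_n^\bullet}}\le \sqrt{a^*(n)}+o(1)$, which implies the statement of the corollary since $\sqrt{a^*(n)}\le a^*(n)$ for all $n\ge 1$.

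For the Poisson tail, the degree of any vertex of $\rt_n^\bullet$ is at most its number of children plus one, so a union bound gives $\p{\Delta_{\rt_n^\bullet}\ge k}\le n\,\p{c_1\ge k-1}$. By Corollary~\ref{childpoisson}, $c_1$ is distributed as a Poisson$(1)$ variable $P_1$ conditioned on a sum event, and the local CLT argument at the end of the proof of Corollary~\ref{cor:deglower} yields, uniformly for $j\le 2\log n$, the bound $\p{c_1\ge j}\le (1+o(1))\p{P_1\ge j}\le (2+o(1))/j!$. Hence
\[
\p{\Delta_{\rt_n^\bullet}\ge k}\le \frac{(2+o(1))\,n}{(k-1)!}\qquad\text{uniformly for }1\le k\le 2\log n.
\]
For $k>2\log n$, I would instead use Fact~\ref{fact:maxdeg} and the crude bound $\sqrt{\Delta_{\rt_n^\bullet}}\le \sqrt{n}$: the total contribution of such terms is at most $\sqrt{n}\,\p{\Delta_{\rt_n^\bullet}\ge 2\log n}\le n^{-3/2}=o(1)$.

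Finally, to sum the main range $a^*(n)<k\le 2\log n$, I would bound $\sqrt{k}-\sqrt{k-1}\le 1/(2\sqrt{a^*(n)})$ and exploit the defining inequality $(a^*(n)+1)!>n$, which gives $n/a^*(n)!\le a^*(n)+1$. For $k\in\{a^*(n)+1,a^*(n)+2\}$ the Poisson bound may still exceed $1$, so I would revert to the trivial bound $\p{\cdot}\le 1$; these two terms contribute $O(1/\sqrt{a^*(n)})$. For $k=a^*(n)+i$ with $i\ge 3$, the Poisson bound yields
\[
\p{\Delta_{\rt_n^\bullet}\ge k}\le \frac{(2+o(1))(a^*(n)+1)}{(a^*(n)+1)(a^*(n)+2)\cdots(a^*(n)+i-1)}=\frac{2+o(1)}{(a^*(n)+2)\cdots(a^*(n)+i-1)},
\]
and summing the resulting geometric series in $i$ contributes a further $O(1/(a^*(n))^{3/2})$. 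Since $a^*(n)\to\infty$, the total is $o(1)$, as required. The main obstacle is really just careful bookkeeping; the one point demanding a bit of care is ensuring that the local CLT transfer in the second step is uniform in $j$ up to $2\log n$, rather than pointwise, but this is standard and follows the lines of Corollary~\ref{cor:deglower}.
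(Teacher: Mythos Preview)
Your proposal is correct and follows essentially the same approach as the paper: both control the upper tail of $\Delta_{\rt_n^\bullet}$ via Corollary~\ref{childpoisson} and the local CLT to get a Poisson-type bound $\p{\Delta\ge k}\lesssim n/(k-1)!$, then split the range of $k$ around $a^*$ and around $c\log n$, using the defining inequality $(a^*+1)!>n$ to handle the middle range. The only difference is presentational --- you use the layer-cake expansion and sum term-by-term, whereas the paper uses a coarser three-term split $\sqrt{a^*+3}+\sqrt{1+\log n}\,\p{a^*+3\le\Delta<\log n}+\sqrt{n}\,\p{\Delta\ge\log n}$ and bounds the middle probability by $\sum_k \e D_k(n)$.
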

\begin{proof}
Note that $a^*=(1+o(1))(\log n)/(\log \log n)$. Recycling the computation from the previous lemma, uniformly over $a^* \le k \le \lceil \log n\rceil$ we have 
\begin{align*}
\p{c_1=k} 
& = (1+o(1))\frac{1}{k!} \le (1+o(1))\frac{1}{n} \pran{\frac{\log\log n}{\log n}}^{k-a^*-1}\, ,
\end{align*}
so writing $D_{k}(n)$ for the number of vertices of $\rt_n^\bullet$ with exactly $k$ children, we have 
\[
\e{D_k(n)} \le (1+o(1))\pran{\frac{\log\log n}{\log n}}^{k-a^*-1}\, ,
\]
uniformly over $k$ in this range. 

Since $\Delta_{\rt_n^\bullet}$ is at most one greater than $\max(k:D_k(n)>0)$, it follows that 
\begin{align*}
\e\sqrt{\Delta_{\rt_n^\bullet}}
& \le \sqrt{a^*+3} +
\sqrt{1+\log n}\p{a^*+3 \le \Delta_{\rt_n^\bullet} < \log n} + \sqrt{n}\p{\Delta_{\rt_n^\bullet}\ge \log n}\, .
\end{align*}
To bound the second term, we use that 
\begin{align*}
\p{a^*+3 \le \Delta_{\rt_n^\bullet} < \log n}
& \le 
\sum_{k=a^*+2}^{\lceil \log n\rceil -1}\p{D_k(n) > 0}\\
& \le \sum_{k=a^*+2}^{\lceil \log n\rceil -1} \e D_k(n) \le (1+o(1)) \pran{\frac{\log\log n}{\log n}}\, ,
\end{align*}
so 
\[
\sqrt{1+\log n}\p{a^*+3 \le \Delta_{\rt_n^\bullet} < \log n}
= o(1)\, .
\]
To bound the third term, we use Corollary~\ref{childpoisson} to deduce that 
\[
\p{c_1 \ge \lceil \log n\rceil} \le 
\frac{\p{X_1 \ge \lceil \log n\rceil}}{\p{X_1+\ldots+X_{n-1} = n-1}}\, ,
\]
where, like above, $X_1,\ldots,X_n$ are independent and Poisson$(1)$-distributed.
The numerator on the right-hand side is $\tfrac{1+o(1)}{(\lceil \log n \rceil)!}$, which decays faster than polynomially, and the denominator is $\Theta(n^{-1/2})$; so
\[
\sqrt{n}\p{\Delta_{\rt_n^\bullet} \ge \log n} 
\le n^{3/2} \p{c_1 \ge \lceil \log n} = o(1)\, .
\]
Combining the above bounds, we conclude that 
\[
\e\sqrt{\Delta_{\rt_n^\bullet}} \le \sqrt{a^*+3}+o(1)=\sqrt{a^*}+o(1)\, ,
\]
the last identity holding since $a^*=a^*(n) \to \infty$ as $n \to \infty$. 
The result follows since $\Delta_{\rt_n}$ and $\Delta_{\rt_n^\bullet}$ have the same distribution.
\end{proof}

\section{\bf Tree surgery}
\label{sec:treesurgery}

At this point, we already have some information about the structure of the uniformly random tree $\rt_n$. In particular, we can show that $\rt_n$ usually satisfies the properties ${\bf N1-N3}$ described in the proof overview, in Section~\ref{sub:overview}. Recall from that section that for an $n$-vertex tree $T=(V,E)$ and $v \in V$, the cluster $C_T(v)$ is the set of all vertices $w\in V$ such that the unique path from $v$ to $w$ does not contain two consecutive vertices with degree $<\low$. 
We note right away the following fact.

\begin{fact}[Proof omitted]\label{fact:clusters} 
``$w\in C_T(v)$'' is an equivalence relation over $V$, so that $V$ is a disjoint union of clusters. Moreover, each cluster is a connected subset of $T$.\end{fact}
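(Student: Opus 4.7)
The plan is to verify reflexivity, symmetry, and transitivity for the relation $v \sim w \Leftrightarrow w \in C_T(v)$, and then deduce connectedness of each cluster by a short sub-path argument.

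Reflexivity is immediate: the unique path from $v$ to $v$ is the single vertex $v$, which has no pair of consecutive vertices at all. Symmetry follows because, by uniqueness of paths in a tree, the $w$-to-$v$ path is simply the reverse of the $v$-to-$w$ path, and so has the same (unordered) set of consecutive pairs.

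The main step is transitivity. Assume $w \in C_T(v)$ and $u \in C_T(w)$; the goal is to deduce $u \in C_T(v)$. Let $x \in V$ be the median of $\{v,w,u\}$ in $T$, i.e., the unique common vertex of the three pairwise paths, giving decompositions $P_{vw} = P_{vx} \cup P_{xw}$, $P_{vu} = P_{vx} \cup P_{xu}$, and $P_{wu} = P_{wx} \cup P_{xu}$, with $P_{vx}$ and $P_{xu}$ sharing only the vertex $x$. Every edge of the concatenated path $P_{vu}$ is an edge of either $P_{vx}$ or $P_{xu}$, so any pair of consecutive vertices along $P_{vu}$ is in fact a pair of consecutive vertices on one of these two sub-paths. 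Since $P_{vx}$ is a sub-path of $P_{vw}$ and $w \in C_T(v)$, no such pair in $P_{vx}$ consists of two vertices of degree $<\low$; the analogous statement holds for $P_{xu} \subset P_{wu}$ using $u \in C_T(w)$. Hence $P_{vu}$ contains no two consecutive vertices of degree $<\low$, yielding $u \in C_T(v)$.

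For the connectedness claim, suppose $w \in C_T(v)$ and $y$ lies on the path $P_{vw}$. Then $P_{vy}$ is a sub-path of $P_{vw}$ and so trivially inherits the absence of consecutive vertices of degree $<\low$, giving $y \in C_T(v)$. Thus $C_T(v)$ contains the entire path from $v$ to each of its members, and is therefore a connected subgraph of $T$. The only step requiring any real thought is the median decomposition used in the transitivity argument; everything else is essentially immediate from the sub-path observation.
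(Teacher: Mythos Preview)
Your proof is correct. The paper explicitly omits the proof of this fact, so there is nothing to compare against; your median-decomposition argument for transitivity and sub-path argument for connectedness are exactly the standard way to verify this, and all steps are sound.
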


\begin{definition}[Typical trees]\label{def:typical} A $n$-vertex tree $T=(V,E)$ is said to be {\em typical} if 
\begin{enumerate}
\item [{\bf N1}] The maximum degree of $T$ is at least $\nearmax$. 

\item [{\bf N2}] For all $x\in V$, $\sum_{y\in C_T(x)}\deg_T(y)\I{\deg_T(y) \ge \low }\leq 3 \log n$.
\item [{\bf N3}] There are no adjacent nodes that both have degree $\geq 0.9\log n/\log \log n$.
\end{enumerate}
\end{definition}

We show in Section~\ref{sub:basictypical} that this terminology makes sense --- a uniformly random tree is typical with high probability --- and is good enough to control second neighborhood sizes. However, typicality by itself does not suffice for our path-counting arguments to work. A stronger notion is needed for this purpose.  
 \begin{definition}[Nice trees] \label{def:nice} A $n$-vertex tree $\tilde{T}=(\tilde{V},\tilde{E})$ is said to be {\em nice}  if it is typical and satisfies the following additional property: 
\begin{enumerate}\item [{\bf N4}] For all $x\in \tilde{V}$, $C_{\tilde{T}}(x)$ contains less than $9\,\log \log n$ vertices of degree $\geq \low$.
\end{enumerate}
\end{definition}

Thus a nice tree is a typical tree where there are very few nodes of (relatively) high degree in any cluster. The main result of this section is that any upper bound on the top eigenvalue of all nice trees extends to all typical trees.
The proof of the next lemma is given in Section~\ref{sub:proof:typicaltonice}.

\begin{lem}\label{lem:typicaltonice}  {For $n >10$}, for any typical $n$-vertex tree $T=(V,E)$, there exists a nice $n$-vertex tree $\tilde{T}$ with $\lambda_1(T)\leq \lambda_1(\tilde{T})$ and {with $\Delta_{\tilde{T}}=\Delta_T$}.\end{lem}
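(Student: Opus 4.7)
The plan is to apply Lemma~\ref{lem:rewiring} iteratively to collapse pairs of degree-$\ge\low$ vertices lying in the same cluster, until property ${\bf N4}$ is achieved. Fix a typical tree $T$ and let $v^\star$ be any vertex with $\deg_T(v^\star)=\Delta_T\ge\nearmax$. The algorithm: while some cluster $C$ in the current tree contains at least $9\log\log n$ vertices of degree $\ge\low$, select two such vertices $v,w$ of smallest degree in $C$ (both necessarily distinct from $v^\star$, which cannot be among the smallest), and apply whichever of the $(v,w)$- or $(w,v)$-rewiring does not decrease $\lambda_1$ (at least one exists, by Lemma~\ref{lem:rewiring}). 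The key quantitative input is that by ${\bf N2}$ the sum of degrees of the (at least $9\log\log n$) degree-$\ge\low$ vertices in $C$ is at most $3\log n$, so the second-smallest of these degrees is at most $\tfrac{3\log n}{9\log\log n - 1}$ and hence the two smallest sum to at most $\tfrac{6\log n}{9\log\log n - 1}$, which is strictly less than $0.9\log n/\log\log n$ for all $n>10$. In particular, after the rewiring the hub (say $w$, in the case of $(v,w)$-rewiring) has new degree $\deg_T(v)+\deg_T(w)-1<0.9\log n/\log\log n\le\Delta_T$.

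Preservation of ${\bf N1}$, ${\bf N3}$, and the equality $\Delta=\Delta_T$ follows immediately: only the degrees of $v$ and $w$ change, $v^\star$ is never touched, $\deg_{T'}(v)=1$, and $\deg_{T'}(w)<0.9\log n/\log\log n$, so no two adjacent vertices of $T'$ both have degree $\ge 0.9\log n/\log\log n$. For ${\bf N2}$ and termination, one analyzes how clusters transform under $(v,w)$-rewiring. Since both $v,w$ are high-degree and the $v$-$w$ path in $T$ lies inside $C$ (hence contains no consecutive degree-$<\low$ vertices), a direct path-by-path inspection shows that for any $x\ne v$, the $w$-to-$x$ path in $T_{v,w}$ either coincides with the $w$-to-$x$ path in $T$ or is a strict shortening obtained by rerouting through the subtrees of $v$ that have been transplanted onto $w$; in either case, no pair of consecutive degree-$<\low$ vertices is created or destroyed. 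Consequently, cluster membership in $T_{v,w}$ agrees with that in $T$ for every vertex other than $v$: the cluster $C_{T_{v,w}}(w)$ contains exactly the vertices of $C\setminus\{v\}$, together with $v$ itself when the neighbor $u$ of $v$ on the $v$-$w$ path has degree $\ge\low$; otherwise $\{v\}$ is a singleton cluster. All clusters of $T$ disjoint from $C$ are preserved.

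A direct count from this description shows that the number of degree-$\ge\low$ vertices in $C_{T_{v,w}}(w)$ equals the corresponding count in $C$ minus one, since $v$ has become a leaf while $w$ remains of degree $\ge\low$ and no other vertex's status changes. The same bookkeeping shows $\sum_{y\in C_{T_{v,w}}(w)}\deg_{T_{v,w}}(y)\,\I{\deg_{T_{v,w}}(y)\ge\low}$ equals the corresponding sum over $C$ minus one (the change is $-\deg_T(v)-\deg_T(w)+(\deg_T(v)+\deg_T(w)-1)=-1$), while the sums for all unaffected clusters are unchanged; this gives ${\bf N2}$. Since the count of degree-$\ge\low$ vertices in each violating cluster strictly decreases (and no other cluster's count increases) at each iteration, the algorithm terminates in at most $n$ steps; at termination every cluster satisfies ${\bf N4}$, so the output $\tilde T$ is nice. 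The main obstacle is the cluster-membership bookkeeping of paragraph two; once that is in hand, the quantitative bound of paragraph one provides all needed degree control, and Lemma~\ref{lem:rewiring} ensures $\lambda_1$ does not decrease.
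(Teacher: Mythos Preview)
Your proof is correct and follows the same overall strategy as the paper: iterate rewirings via Lemma~\ref{lem:rewiring} until {\bf N4} holds, checking that typicality and $\Delta_T$ are preserved at each step. The executions differ in two respects worth noting. First, you rewire only when a cluster actually violates {\bf N4} and then select the two smallest-degree high vertices, so the degree-sum bound $\deg(v)+\deg(w)<0.9\log n/\log\log n$ is an immediate consequence of {\bf N2} and the pigeonhole count; the paper instead rewires \emph{any} pair satisfying its conditions {\bf S1}--{\bf S3} (``safe to rewire'') and deduces {\bf N4} only at termination, from the absence of safe pairs combined with {\bf N2}. Second, your cluster analysis is sharper: you assert that clusters are essentially \emph{preserved} by the rewiring (with $v$ possibly dropping out), whereas the paper proves only the weaker statement that clusters \emph{refine} (Lemma~\ref{lem:clusterrefines}), which already suffices for its invariants and for termination via the leaf count. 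Your stronger preservation claim is true, but the ``direct path-by-path inspection'' you cite deserves a line or two more---specifically, split on whether the $w$--$x$ path in $T$ passes through $v$; when it does, the new path is $w\,s_i\ldots x$ for some former neighbour $s_i$ of $v$, and any consecutive low--low pair on the old path must lie in the $s_i\ldots x$ segment (the excised segment $w\ldots u\,v$ lies in $C$ and $v$ was high), hence survives.
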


To prove Lemma \ref{lem:typicaltonice} we use the rewiring result, Lemma~\ref{lem:rewiring}, from Section~\ref{sub:overview}, to perform a sequence of rewirings which can only increase the top eigenvalue. Moreover, Section~\ref{sec:rewiringstructure} establishes that ``good rewirings'' preserve the typicality and and maximum degree of the tree, while decreasing the number of high-degree vertices in a cluster. Lemma \ref{lem:typicaltonice} will follow from iterating ``good rewirings'' until they are no longer possible, which will be shown to imply that the resulting tree is nice.

\subsection{Basic facts on typical trees}\label{sub:basictypical}

We prove here two facts pertaining to typical trees. The first one is that uniformly random trees are usually typical. 

\begin{lem}\label{lem:typical}The uniformly random tree $\rt_n$ is typical (i.e., satisfies Definition \ref{def:typical}) with probability $1-n^{-0.8+o(1)}$.\end{lem}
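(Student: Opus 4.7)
The plan is to verify the three properties \textbf{N1}, \textbf{N2}, \textbf{N3} separately and conclude by a union bound. Condition \textbf{N3} is exactly the statement of Lemma~\ref{lem:adjacent}, which already gives failure probability $n^{-0.8+o(1)}$; this will dominate the final bound. My task is therefore to show that the failures of \textbf{N1} and \textbf{N2} occur with probabilities that are much smaller.

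For \textbf{N1}, I will invoke Corollary~\ref{cor:deglower} applied to $\rt_n^\bullet$, noting that $\Delta_{\rt_n}$ and $\Delta_{\rt_n^\bullet}$ have the same distribution. With $k=\lceil 0.99\log n/\log\log n\rceil$, Stirling's formula yields $\log(n/k!) = (0.01+o(1))\log n$, hence $\e D_{\ge k}(n) \ge n^{0.01+o(1)}$. The subgaussian lower-tail bound of the corollary then gives
\[
\p{D_{\ge k}(n)=0}\le \exp\!\left(-\tfrac{1}{2}\e D_{\ge k}(n)\right)\le \exp(-n^{0.01+o(1)}),
\]
which is negligible compared to $n^{-0.8}$. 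On the event $\{D_{\ge k}(n)\ge 1\}$ some vertex has $k$ children, so its degree is at least $k\ge\nearmax$, giving \textbf{N1}.

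For \textbf{N2}, the key reduction is the inclusion
\[
\{y\in C_T(x):\deg_T(y)\ge\low\}\subset Z(x):=\{v: \dist_T(x,v)<2K^*(x),\ \deg_T(v)\ge\low\},
\]
for every $x\in V$, which together with the third bound of Corollary~\ref{cor:typicalclusterdegrees} implies that \textbf{N2} fails with probability $O(n^{-3/2})$. To establish the inclusion I will fix a vertex $y\in C_T(x)$ with $\deg_T(y)\ge \low$, set $d=\dist_T(x,y)$, and examine the path $x=v_0,v_1,\dots,v_d=y$. By definition of the cluster, no two consecutive $v_{i-1},v_i$ are both of degree $<\low$, so for each $i\in[d]$ at least one of $v_{i-1}\in\Gamma_{i-1}(x)$, $v_i\in\Gamma_i(x)$ has degree $\ge\low$. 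If $d\ge 2K^*(x)+1$, applying this to $i=2K^*(x)+1$ forces one of $\Gamma_{2K^*(x)}(x),\Gamma_{2K^*(x)+1}(x)$ to contain a vertex of degree $\ge\low$, contradicting the definition of $K^*(x)$. The boundary case $d=2K^*(x)$ is ruled out similarly, since $y\in\Gamma_{2K^*(x)}(x)$ would then have degree $<\low$. Hence $d<2K^*(x)$ and $y\in Z(x)$, so
\[
\sum_{y\in C_T(x)}\deg_T(y)\I{\deg_T(y)\ge\low}\le \sum_{v\in Z(x)}\deg_T(v),
\]
and \textbf{N2} fails as soon as the right-hand side exceeds $3\log n$ for some $x$, an event of probability $O(n^{-3/2})$.

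A union bound over the three failure events produces the overall probability bound $n^{-0.8+o(1)}$, dominated by Lemma~\ref{lem:adjacent}. The only real subtlety is the reduction from \textbf{N2} to the cluster--degree statistic controlled in Corollary~\ref{cor:typicalclusterdegrees}; \textbf{N1} is a routine consequence of Corollary~\ref{cor:deglower}, and \textbf{N3} is a direct reference.
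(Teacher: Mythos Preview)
Your proof is correct and follows essentially the same route as the paper: a union bound over the three properties, with \textbf{N3} coming directly from Lemma~\ref{lem:adjacent}, and \textbf{N2} via the inclusion $\{y\in C_T(x):\deg_T(y)\ge\low\}\subset Z(x)$ combined with the degree-sum bound of Corollary~\ref{cor:typicalclusterdegrees}. The only difference is that for \textbf{N1} you invoke Corollary~\ref{cor:deglower} (the internal negative-association tail bound) in place of the paper's citation of Moon's results; both yield a failure probability of the form $\exp(-n^{0.01+o(1)})$, so this is a cosmetic substitution rather than a different argument. Your justification of the inclusion for \textbf{N2} is in fact more carefully written than the paper's one-line claim.
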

\begin{proof}It suffices to show that $\rt_n$ satisfies each property 
{\bf N1}-{\bf N3} with probability $\geq 1-n^{-0.8+o(1)}$.

For property {\bf N1}, it follows from results of Moon \cite[Theorem 2 and Lemma 4]{moon1968} that $\Delta_{\rt_n}\geq \nearmax$ with probability $1-e^{-(1+o(1))n^{0.01}}$. 

For property {\bf N3}, the desired probability bound is given by Lemma \ref{lem:adjacent}.

Property {\bf N2} can be dealt with via Corollary \ref{cor:typicalclusterdegrees}. For each $u\in [n]$, let 
\[
K^*(u):=\min(k \in \N: \mathrm{deg}(v) < \log^{1/5}n \mbox{ for all } v \in N_u(2k) \cup N_u(2k+1))
\]
and
\[
Z(u) = \{v \in \rt_n: \mathrm{dist}(u,v)< 2K^*(u), \mathrm{deg}(v) \ge \log^{1/5} n\}
\]
be as in the corollary. By definition of $K^*(u)$, all vertices in $C_{\rt_n}(u)$ lie within distance less than $2K^*(u)$ from $u$. As a consequence, all nodes in $v\in C_{\rt_n}(u)$ with degree $\det_{\rt_n}(v)\geq \low$ lie in the set $Z(u)$. It follows that 
\[\max_{u\in[n]}\sum_{v\in C_{\rt_n}(u)}\deg_{\rt_n}(v)\I{\deg_{\rt_n}(v) \ge \low }\leq \max_{u\in[n]}\sum_{v\in Z(u)}\deg_{\rt_n}(u).\]
Corollary \ref{cor:typicalclusterdegrees} implies that the right-hand side is bounded by $3\log n$, with probability at least $1 - Cn^{-3/2}$. Therefore, {\bf N2} holds with at least this probability.
\end{proof}

Our second result on typical trees is that their second neighborhoods can never be two large.  

\begin{prop}\label{prop:secondneighborhood}In a typical tree with maximum degree $\Delta_T\geq \nearmax$, the second neighborhood of any vertex has size at most \[L\Delta_T^{6/5} (\log\Delta_T)^{1/5},\]
where $L>0$ is a universal constant. 
\end{prop}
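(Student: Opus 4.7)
The plan is to bound $\Delta_T^{(2)}$ by analysing $|\Gamma_2(v)|$ for an arbitrary vertex $v$. Since $T$ is a tree, second-neighbours of $v$ are disjointly produced by its first-neighbours, so
\[
|\Gamma_2(v)| \;=\; \sum_{u \in \Gamma_1(v)}(\deg_T(u)-1) \;\le\; \sum_{u \in \Gamma_1(v)} \deg_T(u).
\]
I would partition $\Gamma_1(v)$ into a low-degree part $B = \{u \in \Gamma_1(v) : \deg_T(u) < \low\}$ and a high-degree part $A = \Gamma_1(v) \setminus B$, and bound each contribution to the above sum separately.

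The low-degree contribution is immediately at most $|B|\cdot \low \le \Delta_T \log^{1/5} n$. For the high-degree contribution, the key observation is that every $u \in A$ lies in the cluster $C_T(v)$: the unique $v$-to-$u$ path consists of the single edge $vu$ and cannot contain two consecutive vertices of degree $<\low$ because $\deg_T(u) \ge \low$. Property \textbf{N2} then gives
\[
\sum_{u \in A} \deg_T(u) \;\le\; \sum_{y \in C_T(v)} \deg_T(y)\,\I{\deg_T(y) \ge \low} \;\le\; 3\log n.
\]
Combining the two contributions yields the intermediate estimate $|\Gamma_2(v)| \le \Delta_T \log^{1/5} n + 3\log n$.

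The final step is to translate this into a bound purely in terms of $\Delta_T$ and $\log\Delta_T$ using property \textbf{N1}, which forces $\Delta_T \ge 0.99 \log n /\log\log n$, i.e., $\log n \le (\log\log n)\,\Delta_T/0.99$. Since $\Delta_T \to \infty$ with $n$, I also have $\log\log n \le 2\log\Delta_T$ for $n$ large. Substituting, $\log^{1/5} n \lesssim \Delta_T^{1/5}(\log\Delta_T)^{1/5}$, so the first term is $O\bigl(\Delta_T^{6/5}(\log\Delta_T)^{1/5}\bigr)$; the additive $3\log n = O(\Delta_T\log\Delta_T)$ is dominated by the same expression, since $(\log\Delta_T)^{4} \ll \Delta_T$ for $\Delta_T$ large. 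The only minor obstacle is verifying that the constant $L$ can be chosen uniformly: this is taken care of by the fact that \textbf{N1} forces $\Delta_T$ to grow with $n$, so the asymptotic inequalities above are valid for $n$ large enough, and the remaining (bounded) cases can be absorbed into $L$. There is no substantive difficulty; the proposition is a bookkeeping exercise combining \textbf{N1}, \textbf{N2}, and the tree structure.
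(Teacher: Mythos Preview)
The proposal is correct and follows essentially the same argument as the paper: both split the neighbours of a fixed vertex into those of degree $\ge \low$ (which lie in the cluster and are handled by {\bf N2}) and those of degree $<\low$ (each contributing at most $\low$), then use {\bf N1} to convert the resulting bound $\Delta_T\log^{1/5}n + 3\log n$ into $O(\Delta_T^{6/5}(\log\Delta_T)^{1/5})$.
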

\begin{proof}
Fix a vertex $x\in V(T)$. All neighbors of $x$ with degree at least $\low$ lie in $C_{T}(x)$. Therefore, property {\bf N2} implies that the total contribution of these neighbors to the second neighborhood of $x$ is at most $3\log n=O(\Delta_T\log \Delta_T)$. On the other hand, $x$ has at most $\Delta_T$ neighbors of degree less than $\low$ (by {\bf N1}), and each of these contribute at most $\low = O((\Delta_T\log \Delta_T)^{1/5})$ to the second neighborhood. Therefore, there are at most 
\[O\left(\Delta_T\log \Delta_T + \Delta_T^{6/5}\log^{1/5}\Delta_T\right) = O\left( \Delta_T^{6/5}\log^{1/5}\Delta_T\right)\]
second neighbors to $x$.\end{proof}

\subsection{Safe and good rewirings}\label{sec:rewiringstructure}

The idea of the proof of Lemma \ref{lem:typicaltonice} is to perform a sequence of rewirings from a typical tree until it becomes nice.  However, we have to ensure that the tree remains typical and preserves its maximum degree along the way. This motivates the next definition. 

We fix a $n$-vertex tree $T=(V,E)$ for the remainder of the subsection. 

\begin{definition}[Safe/good to rewire] \label{def:safetorewire} A pair $(v,w)\in V^2$ is {\em safe to rewire} for the tree $T$ if $v\neq w$ and the following three conditions are satisfied
\begin{enumerate}
\item[{\bf S1}] $\min\{\mathrm{deg}_T(v),\mathrm{deg}_T(w)\}\geq \low$;
\item[{\bf S2}] $\mathrm{deg}_T(v)+\mathrm{deg}_T(w)\leq \med$;
\item[{\bf S3}] $v$ and $w$ belong to the same cluster in $T$.\end{enumerate}
If moreover $\lambda_1(T_{v,w})\geq \lambda_1(T)$, $(v,w)$ is said to be {\em good to rewire} for $T$.\end{definition}

The following is an immediate consequence of Lemma \ref{lem:rewiring} and the fact that the definition of ``safe to rewire'' is symmetric in $v$ and $w$.

\begin{fact}\label{fact:safe}If $(v,w)\in V^2$ is safe to rewire for $T$, then either $(v,w)$ or $(w,v)$ is good to rewire. \end{fact}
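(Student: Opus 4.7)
My plan is to derive Fact \ref{fact:safe} as an immediate consequence of Lemma \ref{lem:rewiring}, once one observes that the three safety conditions in Definition \ref{def:safetorewire} are all symmetric in the pair $(v,w)$. First, I would note that \textbf{S1} only constrains $\min\{\mathrm{deg}_T(v),\mathrm{deg}_T(w)\}$, that \textbf{S2} only constrains the symmetric quantity $\mathrm{deg}_T(v)+\mathrm{deg}_T(w)$, and that ``$v$ and $w$ belong to the same cluster of $T$'' is a symmetric condition by Fact \ref{fact:clusters}. Consequently, whenever $(v,w)\in V^2$ is safe to rewire, so is $(w,v)$.

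With this symmetry observation in hand, the Rewiring Lemma (Lemma \ref{lem:rewiring}) guarantees that
\[
\max\{\lambda_1(T_{v,w}),\lambda_1(T_{w,v})\}\geq \lambda_1(T),
\]
so at least one of the two ordered pairs $(v,w)$ or $(w,v)$ satisfies the eigenvalue condition demanded by the second sentence of Definition \ref{def:safetorewire}. Since both pairs are safe to rewire by the previous paragraph, whichever of the two achieves the maximum is good to rewire.

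There is no real obstacle here: the statement is essentially a one-line corollary of Lemma \ref{lem:rewiring}, and the only substantive content is the (entirely routine) verification that the three safety conditions are symmetric under swapping $v$ and $w$. I would keep the proof to a few lines, since the value of this fact lies in its later use in the iterative surgery argument for Lemma \ref{lem:typicaltonice}, where one repeatedly selects a safe pair and replaces $T$ by whichever of $T_{v,w}$ or $T_{w,v}$ does not decrease $\lambda_1$.
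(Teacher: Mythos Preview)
Your proposal is correct and matches the paper's own approach exactly: the paper simply states that Fact~\ref{fact:safe} is an immediate consequence of Lemma~\ref{lem:rewiring} together with the symmetry of the safe-to-rewire conditions in $v$ and $w$. Your write-up just spells out that symmetry explicitly, which is fine.
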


It turns out that rewiring a pair that it is safe to rewire has a constrained effect on the cluster structure of $T$. This is the content of the following Lemma. 
\begin{lem}\label{lem:clusterrefines}Let $T$ be a tree and $(v,w)$ be safe to rewire. Then any cluster of $T_{v,w}$ is fully contained in a cluster of $T$.\end{lem}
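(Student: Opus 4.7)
The plan is to recast the cluster relation in terms of ``bad edges'' so that the global path condition in the definition becomes a local, edge-by-edge check, after which the lemma follows from a brief case analysis.

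First, I will reformulate the notion of cluster. For a tree $T'$, call an edge $\{p,q\}\in E(T')$ \emph{bad} if both $\deg_{T'}(p)<\low$ and $\deg_{T'}(q)<\low$. From the definition of $C_{T'}(\cdot)$ and the uniqueness of paths in a tree, the clusters of $T'$ are precisely the connected components of the subgraph obtained from $T'$ by deleting its bad edges: a $T'$-path avoids pairs of consecutive low-degree vertices if and only if it traverses no bad edge. Under this reformulation, the lemma reduces to showing that every edge $\{p,q\}\in E(T_{v,w})$ that is non-bad in $T_{v,w}$ has its two endpoints in a common cluster of $T$; transitivity of the cluster relation (Fact~\ref{fact:clusters}) then forces each $T_{v,w}$-cluster into one $T$-cluster.

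Next, I will read off how degree types are affected by the rewiring. By Fact~\ref{fact:rewiring} and condition {\bf S1}, $\deg_{T_{v,w}}(v)=1<\low$ (so $v$ becomes low), while $\deg_{T_{v,w}}(w)=\deg_T(v)+\deg_T(w)-1\ge \low$ (so $w$ stays high, since $\deg_T(v)\ge \low$), and every other vertex keeps its $T$-degree. The key point is that $v$ is the only vertex whose type changes, and it goes from high to low; in particular, no vertex is high in $T_{v,w}$ without already being high in $T$.

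With this in hand, I will split the edge check into two cases. If $\{p,q\}\in E(T)$ is not introduced by the rewiring, then non-badness in $T_{v,w}$ forces some endpoint, say $p$, to be high in $T_{v,w}$; by the previous observation $p$ is also high in $T$, so the same edge is non-bad in $T$, placing $p$ and $q$ in a common $T$-cluster. If $\{p,q\}$ is a rewired edge, then $\{p,q\}=\{s,w\}$ for some $s\in\Gamma(v,T)\setminus\{u\}$; in $T$ the unique path from $s$ to $w$ is $s,v,u_1,\dots,u_r=w$, where $v,u_1,\ldots,u_r$ is the $v$-to-$w$ path. The initial edge $\{s,v\}$ is non-bad because $v$ is high in $T$ by {\bf S1}, and the remaining segment is exactly the $v$--$w$ path, which has no two consecutive low-degree vertices by {\bf S3}. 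Hence this $T$-path traverses no bad edge, so $s$ and $w$ lie in a common $T$-cluster.

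Putting the two cases together, every non-bad edge of $T_{v,w}$ joins two vertices of a single $T$-cluster, so by transitivity each $T_{v,w}$-cluster is contained in a $T$-cluster. The only conceptually nontrivial move is the bad-edge reformulation of clusters; once it is in place the rest is a short case split, and I do not anticipate any serious obstacle in the final write-up.
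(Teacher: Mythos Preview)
Your proof is correct and takes a genuinely different route from the paper's. The paper fixes two vertices $a,b$ in the same $T_{v,w}$-cluster, analyzes the $T_{v,w}$-path between them, and splits into cases according to whether that path passes through $w$ (with further sub-cases according to whether the vertex preceding $w$ is a $T$-neighbour of $w$ or a rewired $T$-neighbour of $v$). Your ``bad edge'' reformulation trades this path-level analysis for an edge-by-edge check: since clusters are exactly the components after deleting bad edges, it suffices to show that each non-bad edge of $T_{v,w}$ has its endpoints in a common $T$-cluster, and then invoke transitivity. This localizes the argument and makes the case split shorter---old edges are handled by the observation that no vertex is promoted from low to high by the rewiring, and new edges $\{s,w\}$ are handled by concatenating the non-bad edge $\{s,v\}$ with the $v$--$w$ path (which is bad-edge-free by {\bf S3}). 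The paper's approach has the minor advantage of working directly from the definition without the preliminary reformulation, but yours is more modular and avoids the nested sub-cases.
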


\begin{proof} By Fact \ref{fact:clusters}, we must show that any two vertices $a,b\in V$ that belong to the same cluster in $T_{v,w}$ also belong to the same cluster in $T$. To do this, fix $a,b\in V$ in the same cluster in $T_{v,w}$, and consider the unique path $Y=y_0y_1\dots y_m$ connecting $a=y_0$ to $b=y_m$ in $T_{v,w}$. The fact that $a,b$ lie in the same cluster of $T_{v,w}$ is equivalent to the statement that
\begin{equation}\label{eq:notwoconsecutivelows}\max\{\deg_{T_{v,w}}(y_{i-1}),\deg_{T_{v,w}}(y_{i})\}\geq \low\mbox{ for each }i\in [m].\end{equation} 
Recall from Section \ref{sub:overview} that $T_{v,w}$ is defined by letting $u$ be the unique neighbor of $v$ in $T$ along the $vw$ path, and replacing each edge $sv$ with $s\in N_T(v)-\{u\}$ with an edge $sw$. Fixing this vertex $u$, we see that there are two possibilities. \\

\noindent{\bf Case 1: $Y$ does not contain $w$.} In this case, $Y$ is also the path in $T$ connecting $a,b$. Therefore, $a$ and $b$ lie in the same cluster in $T$ if there are no two consecutive vertices in $Y$ have degree $<\low$ in $T$. To see that this is indeed true, notice that all vertices in $V-\{v,w\}$ have the same degree in $T$ as in $T_{v,w}$. Moreover, {\bf S1} ensures that $\mathrm{deg}_T(v)\geq \low$. Therefore, condition (\ref{eq:notwoconsecutivelows}) for $Y$ as a path $T_{v,w}$ also applies to $Y$ as a path in $T$, and we are done. \\

\noindent{\bf Case 2: $Y$ contains $w$.} In this case, it suffices to argue that $a,b\in C_T(w)$. In fact, we only prove $a\in C_T(w)$, as $b\in C_T(w)$ follows from ``reversing the path."

Since $Y$ is a path in $T_{v,w}$, there exists exactly one index $j\in [m]\cup \{0\}$ with $y_j=w$. We further split into cases.

\begin{itemize}
    \item {\bf Case 2.1: $j=0$.} Then $a=w$ and we are done.
    \item{\bf Case 2.2: $j>0$.} Then $y_0\dots y_{j-1}$ is a path in $T_{v,w}$ not containing $w$, so  $a=y_0$ and $y_{j-1}$ lie in the same cluster of $T$ by Case 1. It now suffices to show that $y_{j-1}\in C_T(w)$, which again requires a considering two cases.
    \begin{itemize}
\item{\bf Case 2.2.1: $y_{j-1}$ is a neighbor of $w$ in $T$.} Then $y_{j-1}\in C_T(w)$ because {\bf S1} guarantees $\deg_T(w)\geq \low$;
\item {\bf Case 2.2.2: $y_{j-1}$ is not a neighbor of $w$ in $T$.} It is still true that $y_{j-1}w$ is an edge in $T_{v,w}$, so it must be that $y_{j-1} \in N_T(v)-\{u\}$, with $u$ as above. In this case, {\bf S3} guarantees that $v\in C_T(w)$, and {\bf S1} implies that $v$ has degree $\geq \low$, so $y_{j-1}\in C_T(v)=C_T(w)$, as desired.\end{itemize}
\end{itemize}

\end{proof}

We now argue that the performing rewirings on safe-to-rewire vertices preserves the typicality and maximum degrees of a tree.

\begin{lem}\label{lem:safetorewire}Assume $T$ is typical and $(v,w)\in V^2$ is safe to rewire for $T$. Then $T_{v,w}$ and $T_{w,v}$ both have the same vertex set and the same maximum degree as $T$. Moreover, $T_{v,w}$ and $T_{w,v}$ are both typical.\end{lem}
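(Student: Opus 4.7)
The plan is to verify each required property separately by analyzing how the rewiring changes degrees, using Fact \ref{fact:rewiring}, the safety conditions {\bf S1}--{\bf S3}, and Lemma \ref{lem:clusterrefines}. By \eqref{eq:sendedges}, the rewiring preserves $V$ and only alters the degrees at $v$ and $w$, sending them to $1$ and $\deg_T(v)+\deg_T(w)-1$ respectively, while for every other vertex $x$ the incident edges (and hence the degree) in $T_{v,w}$ coincide with those in $T$. Since the conditions {\bf S1}--{\bf S3} and {\bf N1}--{\bf N3} are symmetric in $v$ and $w$, it will suffice to treat $T_{v,w}$.

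To handle the maximum degree and {\bf N1}, I would note that by {\bf S2} each of $\deg_T(v),\deg_T(w)$ is at most $\med<\nearmax$, while {\bf N1} for $T$ supplies some $z$ with $\deg_T(z)=\Delta_T\ge\nearmax$. Necessarily $z\notin\{v,w\}$, so $\deg_{T_{v,w}}(z)=\Delta_T$ is retained, and the new degrees $\deg_{T_{v,w}}(v)=1$ and $\deg_{T_{v,w}}(w)\le \med-1$ are both strictly below $\Delta_T$. Hence $\Delta_{T_{v,w}}=\Delta_T\ge\nearmax$, giving {\bf N1}. For {\bf N3}, both $v$ and $w$ have $T_{v,w}$-degree strictly less than $0.9\log n/\log\log n$, so any pair of adjacent vertices in $T_{v,w}$ of degree at least $0.9\log n/\log\log n$ would have to lie in $V\setminus\{v,w\}$; but the edges of $T_{v,w}$ between such vertices coincide with those of $T$ and so do the corresponding degrees, so any such pair would contradict {\bf N3} for $T$.

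The crux is {\bf N2}. Fix $x\in V$; Lemma \ref{lem:clusterrefines} (combined with Fact \ref{fact:clusters}) places $C_{T_{v,w}}(x)$ inside a single cluster of $T$, and since $x$ lies in both, this cluster must be $C_T(x)$. I would then split on whether $w\in C_{T_{v,w}}(x)$. The common observation in both cases is that $v$ contributes nothing to the typicality sum in $T_{v,w}$, since its new degree $1$ is below $\low$, and that $\deg_{T_{v,w}}(y)=\deg_T(y)$ for every $y\notin\{v,w\}$. If $w\notin C_{T_{v,w}}(x)$, then the typicality sum at $x$ in $T_{v,w}$ is dominated termwise by the typicality sum at $x$ in $T$. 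If instead $w\in C_{T_{v,w}}(x)\subseteq C_T(x)$, then by {\bf S3} also $v\in C_T(w)=C_T(x)$; by {\bf S1} both $v$ and $w$ have $T$-degree at least $\low$, so together they contribute $\deg_T(v)+\deg_T(w)$ to the typicality sum for $T$ at $x$, which dominates the single new contribution $\deg_{T_{v,w}}(w)=\deg_T(v)+\deg_T(w)-1$ of $w$ in $T_{v,w}$. Either way the typicality sum at $x$ in $T_{v,w}$ is at most the one at $x$ in $T$, and so is at most $3\log n$ by {\bf N2} for $T$.

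The main obstacle is precisely property {\bf N2}, since the unique large quantity manufactured by the rewiring is the new degree $\deg_{T_{v,w}}(w)$; correctly charging it against the original degrees of $v$ and $w$ inside the same cluster of $T$ requires Lemma \ref{lem:clusterrefines} (to keep $C_{T_{v,w}}(x)$ inside $C_T(x)$) together with {\bf S1} (so that both $v$ and $w$ were already being counted in the typicality sum for $T$) and {\bf S3} (to force $v$ into $C_T(x)$ whenever $w$ lies there).
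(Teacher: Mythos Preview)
Your proof is correct and follows essentially the same approach as the paper: both use Fact~\ref{fact:rewiring} together with {\bf S2} to preserve the maximum degree and {\bf N1}, {\bf N3}; and both use Lemma~\ref{lem:clusterrefines} together with {\bf S1}, {\bf S3} to charge the new contribution $\deg_{T_{v,w}}(w)=\deg_T(v)+\deg_T(w)-1$ against the old contributions of $v$ and $w$ in the {\bf N2} sum. The only cosmetic difference is that the paper compares the two sums over the common index set $C_T(x)$, whereas you split on whether $w\in C_{T_{v,w}}(x)$; this amounts to the same thing.
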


\begin{proof} We only consider $T_{v,w}$ without loss of generality. Notice that the vertex set is preserved by construction. As for the maximum degree, we have $\Delta_T\geq \nearmax$ since $T$ is typical, and {\bf S2} means that neither $v$ or $w$ achieve the maximum degree. For $T_{v,w}$, we have
\begin{equation}\label{eq:bothdegrees}\mathrm{deg}_{T_{v,w}}(v)=1,\, \mathrm{deg}_{T_{v,w}}(w)=\mathrm{deg}_{T}(v)+\mathrm{deg}_{T}(w)-1<\med,\end{equation}
and all vertices $x\in V\setminus\{v,w\}$ have the same degree as in $T$. It follows that $\Delta_{T_{v,w}}=\Delta_T$ is still achieved by some vertex that is neither $v$ nor $w$. 

We now argue that $T_{v,w}$ is typical, as per Definition \ref{def:typical}. Property {\bf N1} is automatic because $T$ and $T_{v,w}$ have the same maximal degree. 

To prove property {\bf N2} for $T_{v,w}$, first note that Lemma \ref{lem:clusterrefines} implies $C_{T_{v,w}}(x)\subset C_T(x)$ for all $x\in V$. Since $T$ is typical (and in particular satisfies {\bf N2}), we obtain
\[\sum_{y\in C_{T}(x)}\deg_{T}(y)\I{\deg_{T}(y) \ge \low}\leq 3\log n.\]
Therefore, it suffices to show that
\begin{equation}\label{eq:claimT2}\sum_{y\in C_{T}(x)}\deg_{T_{v,w}}(y)\I{\deg_{T_{v,w}}(y) \ge \low}\leq \sum_{y\in C_{T}(x)}\deg_{T}(y)\I{\deg_{T}(y) \ge \low},\end{equation}
where $x\in V$ is arbitrary. This property is automatic if $v,w\not\in C_T(x)$, because in this case the degrees in the left-hand side of (\ref{eq:claimT2}) are the same as in the right-hand side. If on the other hand $v\in C_T(x)$ or $w\in C_T(x)$, it must be that $v,w\in C_T(x)$, because {\bf S3} guarantees that $v,w$ lie in the same cluster of $T$. Property {\bf S1} and equation (\ref{eq:bothdegrees}) then imply that the left-hand side of (\ref{eq:claimT2}) is strictly smaller than the right-hand side: the combined contribution of the terms $y=u$ and $y=v$ decreases by $1$, and all other terms remain unchanged.

Finally, equation (\ref{eq:bothdegrees}) implies that no nodes of degree $\geq\med$ are ever created by the rewiring operation. Since $T$ satisfies ${\bf N3}$, it follows that property ${\bf N3}$ also holds for $T_{v,w}$. 
\end{proof}

\subsection{From typical to nice trees}\label{sub:proof:typicaltonice}

We now have all the tools we need to prove Lemma \ref{lem:typicaltonice}.

\begin{proof}[Proof of Lemma \ref{lem:typicaltonice}]Consider the following procedure applied to $T$.

\begin{enumerate}
\item Set $\tilde{T}\leftarrow T$;
\item While there exists a good-to-rewire pair $(v,w)$ for $\tilde{T}$, set $\tilde{T}\leftarrow T_{v,w}$
$\tilde{T}\leftarrow \tilde{T}_{v,w}$.
\item Output $\tilde{T}$.
\end{enumerate}
Note that the vertex set of $\tilde{T}$ is always $V$; only the edge set changes. The above procedure must terminate, since each rewiring operation increases the number of leaves in the tree (cf. (\ref{eq:sendedges}) {and the fact that $v$ is not itself a leaf by condition~{\bf S1} in Definition~\ref{def:safetorewire}}).
By Lemma \ref{lem:safetorewire}, the final tree $\tilde{T}$ is typical, and has the same vertex set and the same maximum degree as $T$. We also know that $\lambda_1(\tilde{T})\geq \lambda_1(T)$ and that $\tilde{T}$ cannot contain any pair $(v,w)\in V^2$ that is good to rewire. Therefore, by Fact \ref{fact:safe}, it also cannot contain any pair that is safe to rewire (otherwise the procedure could continue). It follows that $\tilde{T}$ has the following property:

\begin{enumerate}
    \item[{\bf \~{N}4}] For any two vertices $v,w\in V$ that lie in the same cluster in $\tilde{T}$,
    \[\min\{\mathrm{deg}_{\tilde{T}}(v),\mathrm{deg}_{\tilde{T}}(w)\}\geq \low\Rightarrow\mathrm{deg}_{\tilde{T}}(v)+\mathrm{deg}_{\tilde{T}}(w)>\med.\hfill\]
\end{enumerate}

To finish, we show the following claim.
\begin{claim}If $\tilde{T}$ is typical and satisfies {\bf \~{N}4}, then for all $x\in V$, $C_{\tilde{T}}(x)$ contains < $9\,\log \log n$ vertices of degree $\geq \low$; i.e. $\tilde{T}$ is nice.\end{claim}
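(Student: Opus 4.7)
The plan is a short pigeonhole argument combining properties {\bf N2} and {\bf \~{N}4}. Fix $x\in V$ and set $H:=\{v\in C_{\tilde T}(x)\,:\,\deg_{\tilde T}(v)\ge \low\}$; the goal is to show that $|H|<9\log\log n$.

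First I would use {\bf \~{N}4} to argue that at most one vertex of $H$ has degree $\le \med/2=0.45\,\log n/\log\log n$. Indeed, suppose for contradiction that there are two distinct $v,w\in H$ with $\deg_{\tilde T}(v)\le \med/2$ and $\deg_{\tilde T}(w)\le \med/2$. Then $\deg_{\tilde T}(v)+\deg_{\tilde T}(w)\le \med$; but $v$ and $w$ lie in the same cluster $C_{\tilde T}(x)$ and both have degree $\ge \low$, so {\bf \~{N}4} forces $\deg_{\tilde T}(v)+\deg_{\tilde T}(w)>\med$, a contradiction.

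Next, I would bound the number of vertices of $H$ of degree strictly greater than $\med/2$ via property {\bf N2}, which gives
\[
\sum_{v\in H}\deg_{\tilde T}(v)\le 3\log n.
\]
Each such vertex contributes more than $0.45\,\log n/\log\log n$ to this sum, so their number is at most $3\log n/(0.45\,\log n/\log\log n)=(20/3)\log\log n$. Accounting for the at most one potential vertex of degree $\le \med/2$, I obtain
\[
|H|\le \tfrac{20}{3}\log\log n+1,
\]
which is strictly less than $9\log\log n$ exactly when $(7/3)\log\log n>1$, i.e.\ when $\log\log n>3/7$. Since $\log\log n>\log\log 10>3/7$ for all $n>10$, the claim follows, and $\tilde T$ is nice because its typicality was already verified in the preceding paragraphs of the proof of Lemma~\ref{lem:typicaltonice}.

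I do not anticipate any real obstacle: the content of {\bf \~{N}4} is precisely that, apart from at most one anomalous vertex, every cluster vertex of degree $\ge \low$ actually has degree of order $\log n/\log\log n$, after which {\bf N2} immediately caps their total number at $O(\log\log n)$. The only small point to keep track of is the ``at most one exception'' coming from the strict inequality in {\bf \~{N}4}.
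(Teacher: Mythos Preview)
Your proof is correct and is essentially the same argument as the paper's: both combine {\bf N2} with {\bf \~{N}4} to cap $|H|$ at $(20/3)\log\log n+O(1)$. The only cosmetic difference is that the paper pairs up the vertices of $H$ and uses that each pair contributes more than $\med$ to the degree sum (giving $\med\cdot\lfloor|H|/2\rfloor\le 3\log n$), whereas you observe directly that at most one vertex of $H$ can have degree $\le \med/2$; the two formulations yield the same numerical bound.
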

To see this, fix any $x\in V$, and consider the sum
\[\sum_{y\in C_{\tilde{T}}(x)}\deg_{\tilde{T}}(y)\I{\deg_{\tilde{T}}(y) \ge \low}.
\]
On the one hand, this sum is bounded from above by $3\log n$ because ${\tilde{T}}$ is typical (cf. property {\bf N2} in Definition \ref{def:typical}). On the other hand, {\bf \~{N}4} implies that, for any two vertices $v,w\in C_{\tilde{T}}(x)$ with $\min\{\mathrm{deg}_{\tilde{T}}(v),\mathrm{deg}_{\tilde{T}}(w)\}\geq \low$, their joint contribution to the sum is at least {$\med$}. Therefore
\[\med\,\left\lfloor \frac{\#\{v\in C_{\tilde{T}}(x)\,:\, \mathrm{deg}_{\tilde{T}}(v)\geq \low\}}{2}\right\rfloor\leq 3\log n,\]
from which the claim follows after some simple estimates (this is where the condition that $n > 10$ is required).
\qedhere
\end{proof}

\section{\bf Putting it all together}
\label{sec:endofproof}

In this section we finish the proof of the main result. We will use eigenvalue bound from Corollary~\ref{cor:countpathsIJ}, and we encourage the reader to refresh themselves on the statement of that corollary (and that of Proposition~\ref{prop:countpathsIJ}, on which the corollary relies). 

The main missing technical ingredient is the construction of a good enough $(M,H)$-code for nice trees. The code we shall use appears in Figure~\ref{fig:rightcode}. (Like in Section~\ref{sec:refined}, in Figure~\ref{fig:rightcode} we use the $*$ symbol as a wildcard, meaning that any of $\ell,m,h$ could be there.)
\begin{figure}[hbt]
    \centering
    \includegraphics[width=0.8\linewidth]{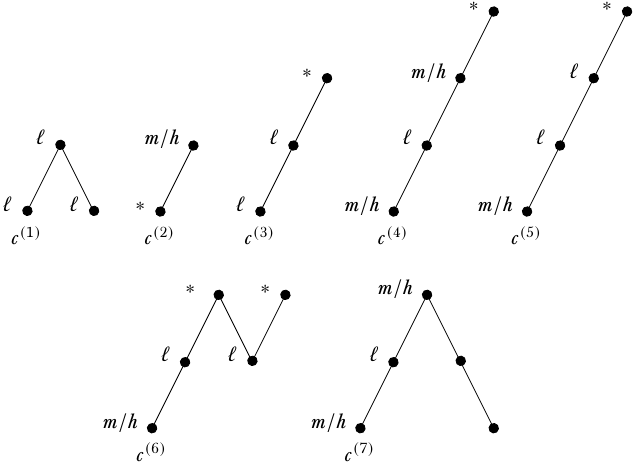}
    \caption{The code we use in our analysis. Each of $c^{(2)},\ldots,c^{(7)}$ in the above figure corresponds to multiple code words, as there are two choices at each location marked $m/h$ and three choices at each location marked $*$. It follows that the total number of words in the above code is $1+6+3+12+6+18+4=50$.}
    \label{fig:rightcode}
\end{figure}
The following claim asserts that it is indeed good enough for our purposes.
\begin{claim}\label{cla:final}For the $(M,H)$-code $C$ in Figure \ref{fig:rightcode}, the parameter $\eps$ in Proposition \ref{prop:countpathsIJ} satisfies:
\[\eps\leq L\,\left(\frac{\log^{2/15}\Delta_T}{\Delta_T^{8/15}}\right).\]
for any nice tree $T$ on $n\ge 3$ vertices (cf. Definition \ref{def:nice}) with maximum degree $\Delta_T$. Here, $L>0$ is a universal constant.
\end{claim}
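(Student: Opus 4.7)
My plan is to prove Claim~\ref{cla:final} by examining each of the seven families of codewords $c^{(1)},\ldots,c^{(7)}$ in Figure~\ref{fig:rightcode} separately, bounding
\[
r(\cw^{(b)}) := \frac{\Delta_T(\cw^{(b)})^{2/(\len(\cw^{(b)})+\f(\cw^{(b)}))}}{\Delta_T}
\]
for each, and verifying that it is at most $L(\log\Delta_T)^{2/15}/\Delta_T^{8/15}$; the prefactor $(2a)^2=10^4$ in the definition of $\eps$ is absorbed into $L$.

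To bound $\Delta_T(\cw^{(b)})$ I will use four facts about a nice tree on $n \ge 3$ vertices. First, the per-step degree bounds: a low vertex has at most $\low = \log^{1/5}n$ neighbors, a moderate vertex at most $0.95\Delta_T$, and a high vertex at most $\Delta_T$. Second, the lower bound $\Delta_T \ge \nearmax$ from property N1, which gives both $\low \le L(\Delta_T\log\Delta_T)^{1/5}$ and $\log\log n \le L\log\Delta_T$. Third, property N4, which, together with the observation that any vertex of degree $\ge \low$ has all of its neighbors in its own cluster, implies that any $m/h$ vertex has at most $9\log\log n$ neighbors of type $m/h$. Fourth, the second-neighborhood bound from Proposition~\ref{prop:secondneighborhood}, namely $|\Gamma_{\le 2}(x)| \le L\Delta_T^{6/5}(\log\Delta_T)^{1/5}$.

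For each codeword I bound $\Delta_T(\cw^{(b)})$ by walking along $\cw^{(b)}$ step by step and multiplying by the per-step bound dictated by the types of the current vertex and its predecessor: $\low$ for any step into type $\ell$; $9\log\log n$ for an $m/h$-to-$m/h$ step (applying N4 at the predecessor); $0.95\Delta_T$ for a step into a moderate vertex from a low vertex; and $\Delta_T$ for a step into a high vertex from a low vertex. For codewords of height two one can alternatively bound the first two steps jointly by $|\Gamma_{\le 2}(v_0)|$ via the fourth tool. This yields, for each family, an estimate $\Delta_T(\cw^{(b)}) \le C\,\Delta_T^{\alpha_b}(\log\Delta_T)^{\beta_b}$ from which $r(\cw^{(b)})$ is immediate. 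The non-backtracking constraint built into the definition of an $(M,H)$-walk is absorbed silently, because the codewords themselves already exclude the forbidden zigzags, so no overcounting occurs in the step-by-step estimate.

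The main obstacle is the uniform verification across all seven families. The exponents $8/15$ and $2/15$ are tuned to the worst family, which I expect to be a medium-length word combining steps into $\ell$ vertices (each contributing $\low = O(\Delta_T^{1/5}\log^{1/5}\Delta_T)$) with steps into $m/h$ vertices (each contributing $O(\log\log n) = O(\log\Delta_T)$ via N4), in a ratio that exactly saturates $2\alpha_b/(\len+\f)-1 = -8/15$ and $2\beta_b/(\len+\f) = 2/15$. For all other families the exponents of $\Delta_T$ and $\log\Delta_T$ come out strictly more favourable, so their values of $r(\cw^{(b)})$ are dominated by the worst family. Taking the maximum over all seven families and multiplying by $(2a)^2$ completes the proof.
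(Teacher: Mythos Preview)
The high-level plan --- go codeword by codeword and bound $\Delta_T(\cw^{(b)})$ --- matches the paper, but the execution has two linked problems.

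First, your per-step rules are stated in the wrong direction. The number of choices for the next vertex is bounded by the degree of the \emph{current} vertex, not by the type of the destination. A step from an $m/h$ vertex into an $\ell$ vertex can have up to $\Delta_T$ choices (think of the centre of a star), not $\low$; conversely, a step from an $\ell$ vertex has at most $\low$ choices regardless of where it lands. Your list ``$\low$ for any step into type $\ell$; \ldots; $\Delta_T$ for a step into a high vertex from a low vertex'' gets these backwards.

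Second --- and this is the real gap --- once the step bounds are corrected, your toolbox is not enough for $c^{(4)}=(0,m/h)(1,\ell)(2,m/h)(3,*)$, which has $\len+\f=6$. The honest step-by-step product is $\Delta_T\cdot\low\cdot\Delta_T$, giving
\[
\eps^{(4)}\le (\Delta_T^2\,\low)^{1/3}/\Delta_T \;\approx\; \log^{1/15}\Delta_T\,/\,\Delta_T^{4/15},
\]
far above the target $\Delta_T^{-8/15}$. Neither your N4 rule (which, as you state it, only controls \emph{adjacent} $m/h$--$m/h$ pairs) nor the second-neighbourhood bound rescues this. The paper handles $c^{(4)}$ by invoking property \textbf{N2}, which you omit entirely: since both $x$ and $z$ are $m/h$, one has $z\in C_T(x)$, and \textbf{N2} gives $\sum_{z\in C_T(x),\,\deg z\ge\low}\deg_T(z)\le 3\log n$, so the total number of choices for $(y,z,w)$ is at most $3\log n=O(\Delta_T\log\Delta_T)$. (One can alternatively push N4 further --- there are at most $9\log\log n$ choices for the distance-two $m/h$ vertex $z\in C_T(x)$, after which $y$ is forced and $w$ has $\le\Delta_T$ choices --- but that is a cluster-level argument, not the per-step adjacent-only use of N4 you describe.) The same $m/h,\ell,m/h$ pattern appears in $c^{(7)}$ and needs the same treatment. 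Add \textbf{N2} to your toolkit and handle $c^{(4)}$ and $c^{(7)}$ via a cluster argument rather than step-by-step.
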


\begin{proof} We assume that the tree $T$ has maximum degree $\Delta_T$ and is nice, as per Definition \ref{def:nice}. To recapitulate, this means that 
\begin{enumerate}
\item [{\bf N1}] The maximum degree of $T$ is $\Delta_T\geq \nearmax$. 
\item [{\bf N2}] For all $x\in V$, $\sum_{y\in }\deg_{T}(y)\I{\deg_{T}(y) \ge \low}\leq 3 \log n$.
\item [{\bf N3}] There are no adjacent nodes that both have degree $\geq 0.9\log n/\log \log n$.
\item [{\bf N4}] For all $x\in V$, $C_{T}(x)$ contains  $<9\,\log \log n$ vertices of degree $\geq \low$.
\end{enumerate}

To apply our combinatorial arguments, we let $H$ denote the set of vertices $z\in V$ with degrees $\deg_T(z)\geq \low$. 

For the code $C$ shown in Figure \ref{fig:rightcode}, our task is to show that
\[\eps^{(b)}:= \frac{\Delta_T(c^{(b)})^{\frac{2}{(\len(c^{(b)})+\f(c^{(b)}))}}}{\Delta_T}\leq L\frac{\log^{2/15} \Delta_T}{\Delta_T^{8/15}}\] 
for each index $b \le 7$. Here, we
recall that $\Delta_T(c^{(b)})$ is the maximum number of ways one can insert $c^{(b)}$ in $T$ (starting from a vertex of the appropriate kind); $\f(c^{(b)})$ is the final value of the codeword; and $\len(c^{(b)})$ is the length of the codeword. The parameter $\eps$ in Claim~\ref{cla:final} is (up to a universal constant $(2a)^2=100^2$) the maximum of the $\eps^{(b)}$. (Above we are slightly abusing notation since there are actually 50 codewords, as noted in the caption of Figure~\ref{fig:rightcode}, but this should not cause confusion in the sequel; for each $b \le 7$, the bound we prove  holds for any of the codewords corresponding to $c^{(b)}$.)

Note that, by condition {\bf N1}, $\log n\leq L\Delta \log \Delta$ and $\log \log n\leq L\log \Delta$. We will use these bounds repeatedly in what follows.

\noindent {\bf Codeword $c^{(1)}$.} Here $\f(c^{(1)})=0$ and $\len(c^{(1)})=1$. Also, $\Delta_T(c^{(1)})\leq \low\leq  L\Delta_T^{1/5} \log^{1/5} \Delta_T$. We conclude that:
\[\eps^{(1)}\leq L\frac{\log^{2/5} \Delta_T}{\Delta_T^{3/5}}.\]

\noindent {\bf Codeword $c^{(2)}$.} In this case $\f(c^{(2)})=\len(c^{(2)})=1$. To compute $\Delta_T(c^{(2)})$, let $x$ be the first vertex in the path. Then the second vertex $y$ belongs to $C_T(x)$ because $y$ has degree $\geq \low$. Condition {\bf N4} means that, given $x$, there are at most $9\log \log n\leq L\log \Delta_T$ choices for $y$. We conclude that 
\[\eps^{(2)}\leq \frac{L\log \Delta_T}{\Delta_T}.\]

\noindent {\bf Codeword $c^{(3)}$.} Final value and length are given by $\f(c^{(3)})=2$ and $\len(c^{(3)})=2$. Moreover, given the initial vertex $x$, there are $(\low)^2\leq L\Delta_T^{2/5}\log^{2/5}\Delta_T$ choices for vertices $y\sim x$ and $z\sim y$. We conclude that: 

\[\eps^{(3)}\leq L\frac{\log^{1/5} \Delta_T}{\Delta_T^{4/5}}.\]

\noindent {\bf Codeword $c^{(4)}$.} In this case $\f(c^{(4)})=\len(c^{(4)})=3$. To compute $\Delta_T(c^{(4)})$, let $x,y,z,w$ be the vertices along the path; our goal is to bound the number of choices for $w$. To do this, notice that $z\in C_T(x)$ because both $x$ and $z$ have degree $\geq \low$. Therefore, $w$ is a neighbor of a node $z\in C_T(x)$ with degree $\deg_T(z)\geq \low$. Condition {\bf N2} guarantees that there are at most $3\log n\leq L\Delta_T \log \Delta_T$ choices for $w$. We conclude

\[\eps^{(4)}\leq L\frac{\log^{1/3} \Delta_T}{\Delta_T^{2/3}}.\]

\noindent {\bf Codeword $c^{(5)}$.} In this case again $\f(c^{(5)})=\len(c^{(5)})=3$. We again let $x,y,z,w$ be the vertices where the codeword is placed, in order they appear. The number of choices for $y$ given $x$ is at most $\Delta_T$, and one can then choose $z$ and $w$ in $(\low)^2\leq L(\Delta_T \log \Delta_T)^{2/5}$ ways. We conclude that $\Delta_T(c^{(5)})\leq L\,\Delta_T^{7/5}\log^{2/5}\Delta_T$ and 
\[\eps^{(5)}\leq L\frac{\log^{2/15} \Delta_T}{\Delta_T^{8/15}}.\]

\noindent{\bf Codeword $c^{(6)}$.} In this case, $\f(c^{(6)})=2$ and $\len(c^{(6)})=4$. To count $\Delta_T(c^{(6)})$, notice that this codeword corresponds to a walk visiting vertices $x,y,z$, returning to $y$ and then visiting some vertex $w$. Given $x$, one can choose $y$ in at most $\Delta_T$ ways; having chosen $y$, one can choose the pair $z,w$ of neighbours of $y$ in at most $(\low)^2\leq L(\Delta_T\log \Delta_T)^{2/5}$ ways since $y$ must have low degree. Therefore, $\Delta_T(c^{(6)})\leq L\,\Delta_T^{7/5}\log^{2/5}\Delta_T$ and 
\[
\eps^{(6 )}\leq L\frac{\log^{2/15} \Delta_T}{\Delta_T^{8/15}}.
\]

\noindent {\bf Codeword $c^{(7)}$.} In this final case we have $\f(c^{(7)})=2$ and $\len(c^{(7)})=4$.
This codeword corresponds to a walk of the form $xyzyx$ where both $x$ and $z$ have degree at least $(\log n)^{1/5}$. 
But then $x$ and $z$ belong to same cluster and as in the case of codeword $c^{(2)}$, we can argue that by  property {\bf N4},  that there are at most $9\log \log n \le L \log \Delta_T$ choices for $z$. 
Thus $\Delta_T(c^{(7)})\le L\log \Delta_T$
and so 
\[
\eps^{(7)}\leq L\frac{\log^{1/3} \Delta_T}{\Delta_T}.\qedhere
\]
\end{proof}

We now turn to the proofs of the main theorems.

\begin{proof}[Proof of 
Theorem~\ref{thm:main}] 
We first use the probability bound of the theorem to prove the expectation bound, then prove the probability bound.

Since $0 \le \lambda_1(\rt_n) \le 2\sqrt{\Delta_{\rt_n}}\le 2\sqrt{n-1}$, if $K \in \N$ is large enough that $(\log^2 x)/\sqrt{x}$ is decreasing for $x \ge K$ then we have 
\begin{align*}
\e|\lambda_1(\rt_n)-\sqrt{\Delta_{\rt_n}}|
& \le 
\sqrt{K}\p{\Delta_{\rt_n} \le K}
\\
& + L\left(\frac{\log^2 K}{\sqrt{K}}\right)^{1/15}
\p{\Delta_{\rt_n} \ge K,|\lambda_1(\rt_n)-\sqrt{\Delta_{\rt_n}}|\le L\left(\frac{\log^2 \Delta_{\rt_n}}{\sqrt{\Delta_{\rt_n}}}\right)^{1/15}}
\\
& +\sqrt{2\log n} 
\p{\Delta_{\rt_n}< 2\log n,|\lambda_1(\rt_n)-\sqrt{\Delta_{\rt_n}}|>L\left(\frac{\log^2 \Delta_{\rt_n}}{\sqrt{\Delta_{\rt_n}}}\right)^{1/15}}\\
& + 2\sqrt{n-1}\p{\Delta_{\rt_n}\ge 2 \log n}.
\end{align*}
The first term of this upper bound tends to $0$ since $\Delta_{\rt_n} \to \infty$ in probability as $n \to \infty$; this follows from the fact, noted in the introduction, that $\Delta_{\rt_n}$ is approximately distributed as the maximum of $n$ independent Poisson$(1)$ random variables. The probability in the second term on the right is at most $1$. The third term on the right tends to $0$ by the probability bound of the theorem. The fourth term on the right tends to $0$ by Fact~\ref{fact:maxdeg}. We deduce that 
\[
\e|\lambda_1(\rt_n)-\sqrt{\Delta_{\rt_n}}| \le L\left(\frac{\log^2 K}{\sqrt{K}}\right)^{1/15} + o(1)\, .
\]
Since we can take $K \in \N$ arbitrarily large, it follows that $\e|\lambda_1(\rt_n)-\sqrt{\Delta_{\rt_n}}| \to 0$.

We now turn to proving the probability bound of the theorem. 
From Lemma \ref{lem:typical} we know that the random labelled tree $\rt_n$ is typical (Definition \ref{def:typical}) with probability at least $1-n^{-0.8+o(1)}$. Therefore, it suffices to show the following deterministic statement: for any typical tree $T$ on $n$ vertices and maximum degree $\Delta_T$,
\begin{equation}\label{eq:upperdegree}\sqrt{\Delta_T}\leq \lambda_1(T)\leq \sqrt{\Delta_T} + L\,\frac{\log^{2/15}\Delta_T}{\Delta_T^{1/30}},\end{equation}
with a universal constant $L>0$. The lower bound is true for all graphs with maximal degree $\Delta_T$, so it suffices to prove the upper bound. By adjusting the value of $L$, we may also assume that $n$ is large. 
In the remainder of this section, $L$ always denotes a universal constant (not depending on $n$) whose value may change line to line.

Applying Lemma \ref{lem:typicaltonice}, we obtain a nice tree $\tilde{T}$ on $n$ vertices with the same maximum degree \[\Delta_T\geq \frac{0.99\log n}{\log \log n}\] as $T$ and with $\lambda_1(T)\leq \lambda_1(\tilde{T})$.

Properties {\bf N1} and {\bf N3}, together with the preceding bound on $\Delta_T$, imply that $\tilde{T}$ does not have adjacent nodes of degree $\geq 0.95\Delta_T$. Corollary \ref{cor:countpathsIJ} applies to the tree $\tilde{T}$ with the $(M,H)$-code $C$ in Figure \ref{fig:rightcode}, which has $a=50$ codewords.
It follows that
\begin{align*}\lambda_1(\tilde{T})& =
\sqrt{\Delta_T} \left(1 + \frac{\Delta_T^{(2)}}{\Delta_T^2}\right)\,\sqrt{1+e^{20}a^2\eps}.\end{align*}

Using the value of $\eps$ given by Claim~\ref{cla:final}, we obtain that  
\[\lambda_1(\tilde{T})\leq \sqrt{\Delta_T} \left(1 + \frac{\Delta_T^{(2)}}{\Delta_T^2}\right)\,\left(1+L\,\frac{\log^{2/15}\Delta_T}{\Delta_T^{8/15}}\right). \]
Proposition \ref{prop:secondneighborhood} gives us
\[\Delta_T^{(2)}\leq L\Delta_T^{6/5} (\log\Delta_T)^{1/5},\]
and the upper bound in (\ref{eq:upperdegree}) now follows after some simple estimates. 
\end{proof}

For the second theorem, we require a final lemma, relating eigenvalues and the degrees of nodes in trees.
\begin{lem}\label{lem:tree_ev_bd}
    Let $T$ be a tree with $n$ vertices. List the vertices of $T$ in decreasing order of degree as $v_1,\ldots,v_n$, and write $d_i$ for the degree of $v_i$. Then for any integer $k \le n/2$ we have $\lambda_k(T)\ge \sqrt{d_{2k}-1}$. 
\end{lem}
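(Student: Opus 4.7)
I would first reduce to a rooted-tree, ``children-count'' version of the lemma. Root $T$ at $v_1$: then $v_1$ has $d_{v_1} \ge d_{2k}$ children (all $d_{v_1}$ incident edges going down), while each $v_i$ for $2 \le i \le 2k$ has at least $d_{v_i}-1 \ge d_{2k}-1$ children (one of its edges going up to its parent). Writing $c := d_{2k}-1$, we thus obtain $2k$ vertices in the rooted tree each with at least $c$ children. It therefore suffices to prove the following: \emph{if a rooted tree $T$ has $2s$ vertices each with at least $c$ children, then $\lambda_s(T) \ge \sqrt{c}$.}

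For this rooted statement the strategy is to construct $s$ orthogonal vectors in $\R^{V(T)}$ and invoke the Courant--Fischer min-max principle. Since $T$ is bipartite, partition its vertices by depth parity; at least $s$ of the $2s$ high-children vertices lie in one class, and I would call these $w_1,\ldots,w_s$ (say, all at even depth). For each $w_i$, pick a set $C_i$ of $c$ of its children (all at odd depth). Two key facts are automatic: (i) the $C_i$'s are pairwise disjoint, since every node in a rooted tree has a unique parent; (ii) $w_j \notin C_i$ for $i \ne j$, by the parity constraint. Define
\[
x^{(i)} \;:=\; \sqrt{c}\, e_{w_i} \;+\; \sum_{u \in C_i} e_u \qquad (1 \le i \le s).
\]
These vectors have pairwise disjoint supports, so they are orthogonal, and a direct computation gives $\|x^{(i)}\|^2 = 2c$ and $\langle x^{(i)}, A x^{(i)}\rangle = 2c\sqrt{c}$, so each has individual Rayleigh quotient $\sqrt{c}$.

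The main obstacle will be passing from the \emph{individual} Rayleigh-quotient bounds to a \emph{uniform} lower bound over the $s$-dimensional subspace $U := \operatorname{span}(x^{(1)},\ldots,x^{(s)})$: for an arbitrary linear combination $x = \sum_i \alpha_i x^{(i)}$, cross terms $\langle x^{(i)}, A x^{(j)}\rangle$ with $i \ne j$ enter the quadratic form and, if nonzero, could drive the minimum quotient over $U$ below $\sqrt{c}$. By the parity constraint, the only way a cross term is nonzero is for some $u \in C_j$ to be a $T$-neighbour of $w_i$; but $u$'s neighbours in $T$ are its parent $w_j$ and its own children, so this forces $w_i$ to be a child of $u$, i.e.\ a \emph{grandchild} of $w_j$ via $u = \operatorname{parent}(w_i)$. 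To suppress all such cross terms I would refine the selection so that the induced subgraph of $T$ on $\bigcup_i (\{w_i\} \cup C_i)$ is a vertex-disjoint union of induced copies of $K_{1,c}$: first, use that the ``grandparent'' relation restricted to one parity class is itself a forest (each vertex has at most one grandparent) and hence bipartite, so it contains an independent set of size at least $s$ among our candidates; second, once the $w_i$'s are picked to be pairwise non-grandparent-related, the cross-edges automatically disappear. Cauchy's interlacing theorem applied to the induced subgraph $\bigsqcup_{i=1}^s K_{1,c}$ then yields
\[
\lambda_s(T) \;\ge\; \lambda_s\!\Bigl(\bigsqcup_{i=1}^s K_{1,c}\Bigr) \;=\; \sqrt{c} \;=\; \sqrt{d_{2k}-1},
\]
completing the proof.
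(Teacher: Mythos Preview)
Your overall strategy matches the paper's: root the tree, pick $k$ of the top $2k$ vertices forming an independent set in $T$ via the depth-parity bipartition, build star-supported test vectors $x^{(i)}$, check orthogonality, and invoke a variational principle. The paper stops at this point, simply writing ``by Rayleigh's principle''; you go further and correctly flag that the off-diagonal terms $\langle x^{(i)}, Ax^{(j)}\rangle$ can be nonzero --- precisely when one centre is a grandchild of another --- which could in principle push the minimum Rayleigh quotient over the span below $\sqrt{c}$.

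There is, however, a genuine gap in your fix. After passing to the $s'\ge s$ same-parity centres, you assert that the grandparent relation on them is a forest (correct), hence bipartite (correct), hence contains an independent set of size at least $s$. This last inference fails: a forest on $s'$ vertices only guarantees an independent set of size $\lceil s'/2\rceil$, and since you only know $s'\ge s$, you get merely $\lceil s/2\rceil$ pairwise non-grandparent-related centres, not $s$. The comb tree --- a path $p_1p_2\cdots p_m$ with $c$ pendant leaves at each $p_i$, rooted at $p_1$ --- shows the shortfall is real: the $m$ high-degree vertices split evenly by parity, and within either parity class the grandparent graph is itself a path on about $m/2$ vertices, with independence number only about $m/4$. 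So with $s=m/2$ you cannot extract $s$ centres as described, and the Cauchy-interlacing step on $\bigsqcup_{i=1}^{s}K_{1,c}$ is unsupported. (In this particular example one \emph{can} rescue the argument by choosing each $C_i$ to consist only of pendant leaves, thereby avoiding the offending parent vertices; but your write-up does not exploit this freedom, and a general repair along these lines would need justification.)
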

\begin{proof}
Fix $k \le n/2$ and choose a set $S \subset [2k]$ of size $k$ such that $\{v_i,i \in S\}$ is an independent set in $T$; such $S$ exists since $T$ is bipartite. Then fix a root for $T$, and for each $i \in S$ write $C_i$ for the resulting set of children of $v_i$ in $T$. Note that $|C_i|=d_i$ if $v_i$ is the root of $T$ and otherwise $|C_i|=d_i-1$. 

Then for $i \in S$ define a vector $x^{(i)}$ by 
\[
x^{(i)}_v
= \begin{cases}
    \sqrt{|C_i|} & \mbox{if } v = v_i\\
    1            & \mbox{if } v \in C_i\\
    0            &  \mbox{otherwise.}
    \end{cases}
\]
Then $\langle x^{(i)}, A x^{(i)} \rangle \ge \sqrt{|C_i|} |x^{(i)}|^2 \ge \sqrt{d_i-1} |x^{(i)}|^2$.
Moreover, $\langle x^{(i)},x^{(j)}\rangle=0$ for $i \ne j$ with $i,j \in S$, since $\{v_i,i \in S\}$ is an independent set. It follows by Rayleigh's principle that $\lambda_k(T) \ge \min(\sqrt{d_i-1}, i \in S)\ge \sqrt{d_{2k}-1}$. 
\end{proof}

\begin{proof}[Proof of Theorem~\ref{thm:main2}]
Recall from Corollary~\ref{cor:deltaupper} that we define $a^*(n)=\max(m\in \N: m! \le n)$. 
By \cite[Theorem~1]{carrgohschmutz}, 
the median $a_n$ of $\Delta_{\rt_n}$ satisfies that 
\begin{equation}\label{eq:ancompare}
|a_n - a^*| = O(1)\, ,
\end{equation}
so in particular,
\[
a_n =(1+o(1))(\log n)/\log \log n.
\]
It then follows from Corollary~\ref{cor:deltaupper} that 
\[
\e{\Delta_{\rt_n}} \le \sqrt{a^*(n)}+o(1)=\sqrt{a_n}+o(1)\, ,
\]
so, by Theorem~\ref{thm:main} and the triangle inequality, $\e\lambda_{1}(\rt_n) \le \sqrt{a_n}+o(1)$. 
Since $\lambda_1(\rt_n)\ge \lambda_{k(n)}(\rt_n)$,  to prove the theorem it remains to show that $\e\lambda_{k(n)}(\rt_n) \ge \sqrt{a_n}-o(1)$, where $k(n)=\lceil e^{(\log n)^{\beta}}\rceil$ for some $\beta \in (0,1/2)$ fixed. 

Let $\alpha \in (\beta,1/2)$ and take
$a=\lfloor a_n-\log^\alpha n\rfloor$; then it follows from Corollary~\ref{cor:deglower} that 
\[
\e D_{\ge a}(n) \ge (1-o(1))\frac{n}{a!} > 4 e^{(\log n)^\beta}\, ,
\]
the second bound holding for $n$ sufficiently large. Using the lower tail bound from Corollary~\ref{cor:deglower} it then follows that 
\[
\p{D_{\ge a}(n) \le 2e^{(\log n)^\beta}} 
\le \exp(-2 e^{(\log n)^\beta})\, .
\]
Since the degree of a node in $\rt_n$ is at least its number of chidren in $\rt_n^\bullet$, it then follows from Lemma~\ref{lem:tree_ev_bd} that 
\[
\p{\lambda_{\lceil e^{(\log n)^{\beta}}\rceil}(\rt_n) < \sqrt{a-1}} \le \exp(-2 e^{(\log n)^\beta})\, .
\]
Since $\sqrt{a-1} \ge \sqrt{a_n}-o(1)$ and $k(n)=\lceil e^{(\log n)^{\beta}}\rceil$, it follows that 
\[
\e{\lambda_{k(n)}(\rt_n)} \ge \sqrt{a_n}-o(1)\, ,
\]
as required. 
\end{proof}

\addtocontents{toc}{\SkipTocEntry} 
\section{\bf Acknowledgements}
Much of this work took place when LAB and GL were visiting RIO at IMPA; they thank  IMPA for the hospitality. During the final stages of the work, LAB was hosted by Alex Scott at Merton College, and thanks them both for their hospitality.

\small 
\addtocontents{toc}{\SkipTocEntry}
\bibliographystyle{plainnat}
\bibliography{topeigenvalue}
\normalsize
                 
\appendix

\end{document}